\documentclass[11pt]{amsart}
\usepackage[utf8]{inputenc}
\usepackage{amssymb, amsthm}
\usepackage[leqno]{amsmath}
\usepackage[foot]{amsaddr}
\pdfoutput=1 
\usepackage{enumerate}

\usepackage{prettyref,cite}
\usepackage[colorlinks,linkcolor=blue,citecolor=red]{hyperref}
\usepackage [a4paper, footskip=1cm, headheight = 16pt, top=3.7cm, bottom=3cm,  right=2.5cm,  left=2.5cm ]{geometry}
\newtheorem{theorem}{Theorem}
\newtheorem{lemma}[theorem]{Lemma}
\mathchardef\mh="2D

\newcounter{tbox}

\newcommand{\sta}[1]{\refstepcounter{tbox}\noindent{ \parbox{\textwidth}{\vspace*{0.3cm}(\thetbox) \emph{#1}\vspace*{0.3cm}}}}

\newcommand{\vsp}{\vspace*{3mm}}

\makeatletter
\newcommand{\otherlabel}[2]{\protected@edef\@currentlabel{#2}\label{#1}}
\makeatother

    \title[Complexity dichotomy for List-5-coloring]{Complexity dichotomy for List-5-coloring with a forbidden induced subgraph}
\author{Sepehr Hajebi$^{\ast}$}
\email{shajebi@uwaterloo.ca}
\author{Yanjia Li$^{\ast}$}
\email{yanjia.li@uwaterloo.ca}
\author{Sophie Spirkl$^{\ast \dagger}$}
\email{sspirkl@uwaterloo.ca}
\address{$^{\ast}$Department of Combinatorics and Optimization, University of Waterloo, Waterloo, Ontario N2L3G1, Canada}
\address{$^{\dagger}$ We acknowledge the support of the Natural Sciences and Engineering Research Council of Canada (NSERC), [funding reference number RGPIN-2020-03912]. Cette recherche a été financée par le Conseil de recherches en sciences naturelles et en génie du Canada (CRSNG), [numéro de référence RGPIN-2020-03912].}

\date{\today}

\allowdisplaybreaks

\begin{document}
\maketitle
\begin{abstract}
	For a positive integer $r$ and graphs $G$ and $H$, we denote by $G+H$ the disjoint union of $G$ and $H$ and by $rH$ the union of $r$ mutually disjoint copies of $H$. Also, we say $G$ is $H$\textit{-free} if $H$ is not isomorphic to an induced subgraph of $G$. We use $P_t$ to denote the path on $t$ vertices. For a fixed positive integer $k$, the \textsc{List-$k$-Coloring Problem} is to decide, given a graph $G$ and a list $L(v)\subseteq \{1,\ldots,k\}$ of colors assigned to each vertex $v$ of $G$, whether $G$ admits a proper coloring $\phi$ with $\phi(v)\in L(v)$ for every vertex $v$ of $G$, and the \textsc{$k$-Coloring Problem} is the \textsc{List-$k$-Coloring Problem} restricted to instances with $L(v)=\{1,\ldots, k\}$ for every vertex $v$ of $G$. We prove that for every positive integer $r$, the \textsc{List-$5$-Coloring Problem} restricted to $rP_3$-free graphs can be solved in polynomial time. Together with known results, this gives a complete dichotomy for the complexity of the \textsc{List-$5$-Coloring Problem} restricted to $H$-free graphs: For every graph $H$, assuming \textsf{P}$\neq$\textsf{NP}, the \textsc{List-$5$-Coloring Problem} restricted to $H$-free graphs can be solved in polynomial time if and only if $H$ is an induced subgraph of either $rP_3$ or $P_5+rP_1$ for some positive integer $r$. As a hardness counterpart, we also show that the \textsc{$k$-Coloring Problem} restricted to $rP_4$-free graphs is \textsf{NP}-complete for all $k\geq 5$ and $r\geq 2$. 
\end{abstract} 

\section{Introduction}
    Throughout this paper, all graphs are finite and simple.  We denote the set of positive integers by $\mathbb{N}$, and for every $k\in \mathbb{N}$, we define $[k]=\{1,\ldots, k\}$. Let $G$ be a graph. We denote by $V(G)$ and $E(G)$ the vertex set and the edge set of $G$, respectively. By a \textit{clique} in $G$ we mean a set of pairwise adjacent vertices, and a \textit{stable set} in $G$ is  a set of pairwise nonadjacent vertices. For every $d\in \mathbb{N}$ and every vertex $v\in V(G)$, we denote by $N^d_G(v)$ the set of all vertices in $G$ at distance $d$ from $v$, and by $N^d_G[v]$ the set of all vertices in $G$ at distance at most $d$ from $v$. In particular, we write $N_G(v)$ for $N_G^1(v)$, which is the set of neighbors of $v$ in $G$, and $N_G[v]$ for $N^1_G[v]=N_G(v)\cup \{v\}$. Also, for every $X\subseteq V(G)$, we define $N^d_G[X]=\bigcup_{x\in X}N^d_G[x]$ and $N^d_G(X)=N^d_G[X]\setminus X$. Again, we write $N_G(X)$ for $N_G^1(X)$ and $N_G[X]$ for $N^1_G[X]$. For every $Z$ which is either a vertex or a subset of vertices of $G$, we write $G-Z$ for the graph obtained from $G$ by removing $Z$. A graph $H$ is an \emph{induced subgraph} of a graph $G$ if $H$ is isomorphic to $G-X$ for some $X\subseteq V(G)$, and otherwise $G$ is \emph{$H$-free}. Also, for every graph $G$ and every $X\subseteq V(G)$, we denote \textit{the subgraph of $G$ induced on $X$}, that is, $G-(V(G)\setminus X)$, by $G|X$. For $r\in \mathbb{N}$ and graphs $G$ and $H$, we denote by $G+H$ the disjoint union of $G$ and $H$ and by $rH$ the union of $r$ pairwise disjoint copies of $H$. For all $t\in \mathbb{N}$, we use $P_t$ to denote the path on $t$ vertices. 

    Let $G$ be a graph and $k\in \mathbb{N}$. By a $k$\textit{-coloring} of $G$, we mean a function $\phi: V(G) \rightarrow [k]$. A coloring $\phi$ of $G$ is said to be \textit{proper} if $\phi(u) \neq \phi(v)$ for every edge $uv \in E(G)$. In other words, $\phi$ is proper if and only if for every $i\in [k]$, $\phi^{-1}(i)$ is a stable set in $G$. We say $G$ is \emph{$k$-colorable} if $G$ has a proper $k$-coloring. For fixed $k\in \mathbb{N}$, the \textsc{$k$-Coloring Problem} asks, given graph $G$, whether $G$ is $k$-colorable. 
 
    A \emph{$k$-list-assignment} of $G$ is a map $L:V(G)\rightarrow 2^{[k]}$. For $v\in V(G)$, we refer to $L(v)$ as the \textit{list of} $v$. Also, for every $i\in [k]$, we define $L^{(i)}=\{v\in V(G):i\in L(v)\}$. An \emph{$L$-coloring} of $G$ is a proper $k$-coloring $\phi$ of $G$ with $\phi(v) \in L(v)$ for all $v \in V(G)$. For example, if $L(v)=\emptyset$ for some $v\in V(G)$, then $G$ admits no $L$-coloring. Also, if $V(G)=\emptyset$, then $G$ vacuously admits an $L$-coloring for every $k$-list-assignment $L$.
    For fixed $k\in \mathbb{N}$, the \textsc{List-$k$-Coloring Problem} is to decide, given an instance $(G,L)$ consisting of a graph $G$ and a $k$-list-assignment $L$ for $G$, whether $G$ admits an $L$-coloring. Note that the \textit{$k$-coloring problem} is in fact the \textsc{List-$k$-Coloring Problem} restricted to instances $(G,L)$ where $L(v)=[k]$ for every $v\in V(G)$. 

    The \textsc{$k$-coloring Problem}, and so the \textsc{List-$k$-Coloring Problem}, are well-known to be \textsf{NP}-complete for all $k\geq 3$ \cite{karp}. This motivates studying the complexity of these problems restricted to graphs with a fixed forbidden induced subgraph, that is, $H$-free graphs for some fixed graph $H$. As a narrowing start, the following two theorems show that there is virtually no hope for a polynomial-time algorithm unless $H$ is a disjoint union of paths.

\begin{theorem}[Kami\'{n}ski and Lozin \cite{CycleFree}] \label{thm:cycle}
	For all $k\geq 3$, the \textsc{$k$-Coloring} problem restricted to $H$-free graphs is \textsf{NP}-complete if $H$ contains a cycle. 
\end{theorem}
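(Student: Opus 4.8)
The plan is to establish the following stronger statement, from which the theorem follows immediately: for every integer $k\geq 3$ and every integer $g\geq 3$, the \textsc{$k$-Coloring Problem} restricted to graphs of girth at least $g$ is \textsf{NP}-complete. To see that this suffices, suppose $H$ contains a cycle and let $g_0\leq |V(H)|$ be the length of a shortest cycle in $H$. Then any graph $G$ of girth at least $|V(H)|+1$ has no cycle of length at most $|V(H)|$, and in particular no cycle of length $g_0$; hence $G$ cannot contain $H$ even as a subgraph, let alone as an induced subgraph, so $G$ is $H$-free. Thus every graph of girth at least $|V(H)|+1$ is $H$-free, and so an algorithm solving \textsc{$k$-Coloring} on $H$-free graphs would in particular solve it on graphs of girth at least $|V(H)|+1$; the \textsf{NP}-hardness of the latter therefore transfers to the former. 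Since any proper coloring is a polynomial-size certificate, the problem is in \textsf{NP}, and we obtain \textsf{NP}-completeness.

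For the girth statement I would reduce from the \textsc{$k$-Coloring Problem} on arbitrary graphs, which is \textsf{NP}-complete by \cite{karp}. The heart of the reduction is an \emph{edge gadget}: a graph $J$ of girth at least $g$ with two nonadjacent vertices $u,v$ at distance at least $g$ such that (i) every proper $k$-coloring $\phi$ of $J$ satisfies $\phi(u)\neq \phi(v)$, and (ii) for every ordered pair $(a,b)$ of distinct colors there is a proper $k$-coloring $\phi$ of $J$ with $\phi(u)=a$ and $\phi(v)=b$. Given such a $J$, from an instance $G$ I build $G'$ by replacing each edge $xy$ of $G$ with a private copy of $J$, identifying its terminals with $x$ and $y$; the copies are otherwise pairwise disjoint, and $V(G)\subseteq V(G')$. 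Property (i) guarantees that the restriction to $V(G)$ of any proper $k$-coloring of $G'$ is a proper $k$-coloring of $G$, while property (ii) guarantees that every proper $k$-coloring of $G$ extends to one of $G'$ gadget by gadget; hence $G'$ is $k$-colorable if and only if $G$ is. Since $J$ has fixed size, the reduction runs in polynomial time.

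It remains to check girth and, crucially, to construct $J$. For the girth: every cycle of $G'$ either lies inside a single gadget, and so has length at least $g$, or passes through at least two terminals, in which case it splits into at least two terminal-to-terminal paths lying in distinct gadgets, each of length at least $g$; either way $G'$ has girth at least $g$. The main obstacle is the construction of the edge gadget $J$, since the obvious length-increasing operation of subdividing a single edge destroys property (i): the path $u\mh a\mh b\mh v$ admits a $3$-coloring with $\phi(u)=\phi(v)$. The way I would resolve this is to take $J$ from a \emph{uniquely $k$-colorable} graph of girth at least $g$ (whose existence, for every $k$ and $g$, is the technical core, established by Emden-Weinert, Hougardy and Kreuter): choosing $u$ and $v$ in two different color classes of such a graph forces $\phi(u)\neq\phi(v)$ in every proper $k$-coloring and, because the only proper $k$-colorings are the $k!$ permutations of the fixed color partition, realizes every ordered pair of distinct colors on $(u,v)$, giving both (i) and (ii); a sufficiently large such graph also supplies terminals at distance at least $g$. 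Verifying unique colorability while controlling the girth is the delicate part of the whole argument.
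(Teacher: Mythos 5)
The paper never proves this statement: Theorem \ref{thm:cycle} is imported as a black box from Kami\'{n}ski and Lozin \cite{CycleFree}, so there is no internal proof to measure you against; I can only assess your argument on its own terms and against the literature. On those terms it is sound in structure, and it is essentially the known argument (the one due to Emden-Weinert, Hougardy and Kreuter; the cited proof of Kami\'{n}ski and Lozin likewise works by splicing a constant-size high-girth edge gadget into a general \textsc{$k$-Coloring} instance \cite{karp}). Your derivation of the theorem from the girth statement is correct, your verification that a uniquely $k$-colorable gadget with terminals in distinct color classes has properties (i) and (ii) is correct, and your girth analysis of $G'$ (every cycle either lies in one gadget or decomposes into at least two terminal-to-terminal segments) is correct. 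Since the statement under review is itself only cited in the paper, importing the existence of uniquely $k$-colorable graphs of arbitrarily large girth as an external ingredient is a fair move, though it does mean the ``technical core,'' as you call it, is not actually carried out.

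One step does need repair: the claim that ``a sufficiently large such graph also supplies terminals at distance at least $g$.'' Largeness in the number of vertices guarantees nothing here; a priori a huge uniquely $k$-colorable graph of girth $g$ could have small diameter, and your gadget genuinely needs far-apart terminals (adjacent or nearby terminals in different classes always exist, but they would ruin the girth bound on $G'$). The fix is to ask for larger \emph{girth} rather than larger order: take the uniquely $k$-colorable graph $U$ with girth at least $2g+4$. In any graph, two antipodal vertices of a shortest cycle $C$ are at graph distance equal to their distance along $C$ (a shortcut, chosen with the violating pair minimizing graph distance so that the shortcut meets $C$ only at its ends, would close up with the shorter arc into a cycle shorter than $C$); hence $U$ contains vertices $u,v$ with $\mathrm{dist}_U(u,v)\geq g+1$. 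If $u$ and $v$ lie in the same class of the unique color partition, replace $v$ by a neighbor of $v$: this moves to a different class and decreases the distance by at most one. This yields nonadjacent terminals in distinct color classes at distance at least $g$, and with that adjustment your proof goes through.
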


\begin{theorem}[Holyer \cite{ClawFree}] \label{thm:claw}
	For all $k\geq 3$, the \textsc{$k$-Coloring Problem} restricted to $H$-free graphs is \textsf{NP}-complete if $H$ contains a `claw' (a vertex with three pairwise nonadjacent neighbors). 
\end{theorem}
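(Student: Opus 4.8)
The plan is to reduce the problem to the claw-free case. The key observation is that since $H$ contains an induced claw, every claw-free graph is automatically $H$-free: if a graph $G$ contained an induced copy of $H$, it would contain the induced claw sitting inside $H$, contradicting claw-freeness. Hence the class of claw-free graphs is contained in the class of $H$-free graphs, and so it suffices to prove that the \textsc{$k$-Coloring Problem} restricted to claw-free graphs is \textsf{NP}-complete; any hardness reduction producing claw-free instances then produces $H$-free instances, and membership in \textsf{NP} is immediate, since a proper $k$-coloring is a polynomially checkable certificate.

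To establish hardness on claw-free graphs, I would pass through edge-coloring via the line graph. Recall that the line graph $L(G)$ of a graph $G$ has the edges of $G$ as its vertices, with two of them adjacent exactly when they share an endpoint in $G$. First I would verify that $L(G)$ is always claw-free: the neighbors of a vertex $e=uv$ of $L(G)$ split into the edges of $G$ through $u$ and those through $v$, each of these two sets forming a clique in $L(G)$, so by pigeonhole any three neighbors of $e$ include two lying in a common clique, whence no three of them are pairwise nonadjacent. Second, proper $k$-colorings of $L(G)$ are in bijection with proper $k$-edge-colorings of $G$, since adjacent vertices of $L(G)$ are precisely the pairs of edges of $G$ that are forced to receive distinct colors. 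Thus deciding $k$-colorability of $L(G)$ is exactly deciding $k$-edge-colorability of $G$.

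It then remains to invoke the \textsf{NP}-completeness of edge-coloring. For $k=3$, Holyer's theorem \cite{ClawFree} that deciding whether a cubic graph is $3$-edge-colorable is \textsf{NP}-complete gives, through the line-graph correspondence above, \textsf{NP}-completeness of the \textsc{$3$-Coloring Problem} on the (claw-free) line graphs of cubic graphs. For the general case I would use the extension, due to Leven and Galil, that deciding $k$-edge-colorability of $k$-regular graphs is \textsf{NP}-complete for every $k\geq 3$; feeding a $k$-regular instance into the line-graph construction yields a claw-free graph whose $k$-colorability is equivalent to the $k$-edge-colorability of the input, completing the reduction for all $k\geq 3$.

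I expect the main obstacle to be exactly this last step. The line-graph reduction transfers everything cleanly to edge-coloring, but the genuinely hard content---the \textsf{NP}-completeness of edge-coloring regular graphs---is a substantial gadget construction: the case $k=3$ is Holyer's original difficult result, and handling all $k\geq 3$ uniformly requires the regular edge-coloring hardness rather than a naive reduction from the $3$-coloring case. In particular, the usual trick of reducing $3$-coloring to $k$-coloring by adding a universal clique of size $k-3$ destroys claw-freeness whenever the underlying graph has a stable set of size three, so one really does need the edge-coloring route to stay inside the claw-free class.
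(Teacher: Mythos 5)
Your proposal is correct and follows exactly the standard argument behind the paper's citation: line graphs are claw-free (hence $H$-free for any $H$ containing a claw), $k$-coloring a line graph is the same as $k$-edge-coloring the underlying graph, and hardness of edge-coloring does the rest. Your one refinement is in fact a point of precision the paper glosses over: Holyer's theorem alone covers only $k=3$ (cubic graphs), and the general case $k\geq 3$ genuinely requires the Leven--Galil extension to $k$-regular graphs, which you correctly identify rather than attempting the clique-addition trick that would leave the claw-free class.
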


Accordingly, an extensive body of work has been devoted to show that excluding certain paths (or their disjoint unions) makes the \textsc{$k$-Coloring} and the \textsc{List-$k$-Coloring} problem easier. Here is a list of known results in this direction.

\begin{theorem}\label{knownpoly}
		The \textsc{$k$-Coloring Problem} restricted to $H$-free graphs can be solved in polynomial time if:
		\begin{itemize}
			\item $H=P_6$ for $k=4$ \textup{(Chudnovsky, Spirkl and Zhong \cite{4P6})}; 
			\item $H=rP_2$ for all fixed $k,r\in \mathbb{N}$ \textup{(Golovach, Johnson, Paulusma and Song \cite{golovachsurvey})};
			\item $H=rP_3$ for $k=3$ and all fixed $r \in \mathbb{N}$ \textup{(Broersma, Golovach, Paulusma and Song \cite{3*P2+P4&3*rP3})};
		\end{itemize}
 		and the \textsc{List-$k$-Coloring Problem} restricted to $H$-free graphs can be solved in polynomial time if:
		\begin{itemize}
			\item $H=P_5$ for all fixed $k\in \mathbb{N}$ \textup{(Ho{\`a}ng, Kami{\'n}ski, Lozin, Sawada and Shu \cite{kP5})};
			\item $H=P_7$ for $k=3$ \textup{(Bonomo, Chudnovsky, Maceli, Schaudt, Stein and Zhong \cite{3P7})};
			\item $H=P_6+rP_3$ for $k=3$ and all $r\in \mathbb{N}$ \textup{(Chudnovsky, Huang, Spirkl and Zhong \cite{L3P6+rP3})};
			\item $H=P_5+rP_1$ for all $r, k\in \mathbb{N}$ \textup{(Couturier, Golovach, Kratsch and Paulusma \cite{LkP5+rP1&L5P4+P2-NP}). \label{thm:p5rp1}}
		\end{itemize}
	\end{theorem}
	
	On the other hand, the following hardness results are known. 	
	\begin{theorem}
		The \textsc{$k$-Coloring Problem} restricted to $H$-free graphs is \textsf{NP}-Complete if:
		\begin{itemize}
			\item $H=P_6$ for $k=5$, or $H=P_7$ for $k=4$ \textup{(Huang \cite{5P6&4P7-NP}); \label{thm:p6}}
			\item $H=P_5+P_2$ for $k=5$ \textup{(Chudnovsky, Huang, Spirkl and Zhong \cite{L3P6+rP3}); \label{thm:p5p2}}
		\end{itemize}
		and the \textsc{List-$k$-Coloring Problem} restricted to $H$-free graphs is NP-Complete if:
		\begin{itemize}
			\item $H=P_6$ for $k=4$ \textup{(Golovach, Paulusma and Song \cite{L4P6-NP})\label{thm:listp6}};
			\item $H=P_4+P_2$ for $k=5$ \textup{(Couturier, Golovach, Kratsch and Paulusma \cite{LkP5+rP1&L5P4+P2-NP}) \label{thm:p4p2}}.
		\end{itemize}
	\end{theorem}

Our main result is the following.

\begin{theorem} \label{thm:main}
	For every $r \in \mathbb{N}$, the \textsc{List-$5$-Coloring Problem} restricted to $rP_3$-free graphs can be solved in polynomial time. 
\end{theorem}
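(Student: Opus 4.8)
The plan is to give, for each fixed $r$, a polynomial-time algorithm that reduces an instance to polynomially many instances in which every list has size at most $2$; these are then solved by the standard encoding of $2$-list-colouring as \textsc{$2$-Sat}. As a base case, when $r=1$ the graph is $P_3$-free and hence a disjoint union of cliques, and list-colouring a clique is exactly finding a system of distinct representatives for the lists, i.e.\ a bipartite matching, so the case $r=1$ is polynomial. Before the main work I would dispose of two reductions. First, since an $L$-colouring of a disconnected graph is just an $L$-colouring of each component, I may assume $G$ is connected. Second, since a $5$-colorable graph contains no $K_6$, I may assume $\omega(G)\le 5$ (this is testable in time $O(n^6)$), so that every clique met below has at most $5$ vertices.

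The structural backbone is the following decomposition. Greedily build a maximal collection of pairwise anticomplete induced copies of $P_3$; as $G$ is $rP_3$-free there are at most $r-1$ of them, so their union $S$ satisfies $|S|\le 3(r-1)$, and by maximality $G-N[S]$ has no induced $P_3$ and is therefore a disjoint union of cliques, each of size at most $5$. Writing $A=N_G(S)$, I would branch over the at most $5^{3(r-1)}$ proper $L$-colorings of $G|S$ and, in each branch, delete from the list of every vertex of $A$ the colors used on its neighbors in $S$. The decisive observation is that once $S\cup A$ is colored, the cliques of $G-N[S]$ decouple completely: each such clique $K$ sees the already-colored part only through $A$, distinct cliques are anticomplete, and so $K$ can be completed independently by a single system-of-distinct-representatives computation. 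Thus, after the bounded branching on $S$, the whole problem collapses to coloring the interface $A$ (with its updated lists) in a way that, clique by clique, leaves a feasible completion.

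The hard part is exactly this last step. The interface $A=N_G(S)$ can be large, and its coloring is coupled to the cliques through the extendability requirement, so neither brute-force branching over $A$ nor a single global matching is available. My plan is to drive the lists on $A$ down to size at most $2$ using constraint propagation together with a bounded amount of further branching whose size is controlled by the $rP_3$-free hypothesis: I would partition $A$ by neighborhood pattern in $S$ (at most $2^{3(r-1)}$ classes) and by residual list (at most $2^5$ possibilities), yielding boundedly many types, and then prove a structural lemma to the effect that, after the core is colored, only a bounded number of vertices of $A$ can still carry a list of size at least $3$ without either forcing a forbidden $rP_3$ or allowing a color to be removed so as to reduce to an $(r-1)P_3$-free instance handled inductively. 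Branching on those finitely many vertices makes every list have size at most $2$, at which point properness across the $S$\textendash$A$ and $A$\textendash$A$ edges is a genuine $2$-list-colouring instance reducible to \textsc{$2$-Sat}. The remaining difficulty, which I expect to be the technical heart of the argument, is to enforce the per-clique extendability constraints—each involving a clique of size at most $5$ but possibly many neighbors in $A$—within the same polynomial-time framework rather than as genuinely high-arity constraints; this is precisely where the disjoint-union-of-cliques structure of $G-N[S]$ and the $rP_3$-free hypothesis must be exploited, in the spirit of the $k=3$ result \ref{3colrP3}.
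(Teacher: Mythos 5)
Your opening decomposition is sound: a maximal collection of pairwise anticomplete induced $P_3$'s has size at most $r-1$, its union $S$ has at most $3(r-1)$ vertices, $G-N[S]$ is a disjoint union of cliques, and branching over the $O(5^{3(r-1)})$ colorings of $S$ is harmless. This is the same starting point as the $3$-coloring result (Theorem \ref{3colrP3}) that you cite. But from that point on your text is a plan, not a proof, and the two steps you defer are exactly the ones that carry all the difficulty. First, the ``structural lemma'' you hope for --- that after $S$ is colored, only boundedly many vertices of $A=N_G(S)$ can retain lists of size at least $3$ without forcing an $rP_3$ or allowing a color to be removed --- is not true in any literal form: a vertex of $A$ with a single neighbor in $S$ keeps a list of size $4$, and there can be arbitrarily many such vertices (say, pairwise nonadjacent vertices all attached to the same vertex of $S$) with no induced $rP_3$ arising and no evident color to discard. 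The escape clauses you attach make the statement too vague to assess, and making them precise is precisely the missing argument. Second, even if every list on $A$ had size at most $2$, your final step does not reduce to \textsc{$2$-Sat}: the vertices inside the cliques of $G-N[S]$ still carry lists of size up to $5$, and the requirement that each clique admit a system of distinct representatives after its $A$-neighbors are colored is a genuinely high-arity constraint coupling all of those neighbors at once. You acknowledge this (``the technical heart'') but offer no mechanism for it; with $5$ colors this is exactly where \textsf{NP}-hardness can enter (List-$5$-Coloring is already hard on $(P_4+P_2)$-free graphs), so it cannot be waved through.

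For comparison, the paper takes a different route that never uses the clique decomposition (nor connectivity, nor a bound on $\omega(G)$). It first reduces, via a polynomial-size family of spanning refinements, to \emph{frugal} colorings (Theorem \ref{frugality}), using the Ding--Seymour--Winkler bound on vertex covers in terms of $\nu$ and $\Lambda$ applied to hypergraphs whose matchings and intersection patterns would create an $rP_3$ (Lemma \ref{biplem}). It then eliminates all good $P_3$'s --- induced $P_3$'s whose three lists have size at least $2$ and pairwise intersect --- again by polynomial branching, exploiting that at most $r-1$ mutually anticomplete good $P_3$'s of a fixed type can exist (Theorem \ref{goodp3theorem}). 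Finally, a local analysis of first and second neighborhoods in the auxiliary graph $G^L$ (Lemma \ref{ABhomo} and Theorem \ref{23}) shows that any good-$P_3$-free instance with all lists of size $\neq 1$ and some list of size at least $3$ can be strictly shrunk while preserving feasibility in both directions, so iteration drives all lists to size at most $2$ and Theorem \ref{2LC} finishes. Your outline and the paper agree only on the end game (\textsc{$2$-Sat}) and on using anticomplete $P_3$-collections to bound branching; the core reductions you would need have no counterpart in your proposal and would have to be invented from scratch.
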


Note that in addition to extending the third bullet of Theorem \ref{knownpoly}, this completely classifies the complexity of the \textsc{List-$5$-Coloring Problem} restricted to $H$-free instances. Let us prove this formally:

\begin{theorem} \label{thm:dichotomy}
	Let $H$ be a graph. Assuming \textsf{P}$\neq$\textsf{NP}, the \textsc{List-$5$-Coloring Problem} restricted to $H$-free graphs can be solved in polynomial time if and only if $H$ is an induced subgraph of $rP_3$ or $P_5+rP_1$ for some $r \in \mathbb{N}$. 
\end{theorem}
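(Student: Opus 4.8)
The plan is to obtain Theorem~\ref{thm:dichotomy} by combining Theorem~\ref{thm:main} with the previously listed tractability and hardness results, glued together by the elementary \emph{monotonicity principle}: whenever $H_1$ is an induced subgraph of $H_2$, every $H_1$-free graph is also $H_2$-free, so the class of $H_1$-free graphs is contained in the class of $H_2$-free graphs. Two consequences will be used throughout. First, a polynomial-time algorithm for \textsc{List-$5$-Coloring} on a class of $H$-free graphs automatically solves the problem on every subclass; in particular, tractability for a forbidden graph $H_2$ descends to every induced subgraph $H_1$ of $H_2$. Second, \textsf{NP}-hardness on a class of $H$-free graphs is inherited by every superclass; in particular, hardness for a forbidden graph $H_1$ ascends to every $H_2$ containing $H_1$ as an induced subgraph. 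Membership in \textsf{NP} is clear, so it suffices to separate ``polynomial time'' from ``\textsf{NP}-complete''.

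For the tractable direction, suppose $H$ is an induced subgraph of $rP_3$ or of $P_5+rP_1$ for some $r\in\mathbb{N}$. If $H$ is an induced subgraph of $rP_3$, then every $H$-free graph is $rP_3$-free, and Theorem~\ref{thm:main} gives a polynomial-time algorithm. If instead $H$ is an induced subgraph of $P_5+rP_1$, then every $H$-free graph is $(P_5+rP_1)$-free, and Theorem~\ref{thm:p5rp1} with $k=5$ gives a polynomial-time algorithm. Nothing more is needed here.

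The content lies in the converse, which I will prove in contrapositive form through the following combinatorial claim: \emph{if $H$ is an induced subgraph of neither $rP_3$ nor $P_5+rP_1$ for any $r\in\mathbb{N}$, then $H$ contains, as an induced subgraph, a cycle, a claw, $P_6$, or $P_4+P_2$.} Granting the claim, \textsf{NP}-completeness on $H$-free graphs follows in each case: directly from Theorems~\ref{thm:cycle} and~\ref{thm:claw} when $H$ contains a cycle or a claw, and by the monotonicity principle from Theorem~\ref{thm:p6} (when $P_6$ is an induced subgraph of $H$) and Theorem~\ref{thm:p4p2} (when $P_4+P_2$ is an induced subgraph of $H$); for the cycle, claw, and $P_6$ cases it is enough that \textsc{$5$-Coloring} is the special case of \textsc{List-$5$-Coloring} in which every list equals $[5]$. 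To prove the claim I assume $H$ contains none of the four structures and show it is an induced subgraph of $rP_3$ or $P_5+rP_1$. Excluding an induced cycle and an induced claw forces $H$ to be a disjoint union of paths, and excluding an induced $P_6$ bounds each path by five vertices. Let $H^{\ast}$ denote the union of the components of $H$ with at least two vertices. If $H^{\ast}$ has at most one component, then $H$ consists of at most one path $P_b$ with $b\le 5$ together with isolated vertices, which is an induced subgraph of $P_5+rP_1$ for large enough $r$. Otherwise $H^{\ast}$ has at least two components; were some component to have at least four vertices, it would contain an induced $P_4$, which together with an edge of a second component would induce $P_4+P_2$---so every component of $H^{\ast}$, and hence every component of $H$, has at most three vertices, making $H$ an induced subgraph of $rP_3$. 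This proves the claim.

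Essentially all of the difficulty has been absorbed into Theorem~\ref{thm:main}, which is taken as given here; the dichotomy itself is a routine marriage of monotonicity with the case analysis above. The one place that genuinely requires the dedicated hardness result Theorem~\ref{thm:p6} is a single long path: $P_6$ itself lies on the hard side yet contains no induced $P_4+P_2$ (a path needs at least seven vertices to contain one), so it cannot be reached by the $P_4+P_2$ reduction. Conversely, the ``boundary'' graphs whose nontrivial part is exactly $P_4$, $P_5$, or $2P_2$ must be checked to fall on the tractable side, which is precisely what the embeddings into $P_5+rP_1$ and $rP_3$ deliver. It is worth noting that Theorem~\ref{thm:p5p2} is not needed for the \textsc{List-$5$-Coloring} dichotomy: since $P_4+P_2$ is an induced subgraph of $P_5+P_2$, the class of $(P_4+P_2)$-free graphs is contained in the class of $(P_5+P_2)$-free graphs, so Theorem~\ref{thm:p4p2} already subsumes the hardness it would provide.
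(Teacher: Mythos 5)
Your proof is correct and, despite being phrased in contrapositive form (an explicit combinatorial claim that a graph excluding cycles, claws, $P_6$ and $P_4+P_2$ embeds into $rP_3$ or $P_5+rP_1$), it is essentially the same argument as the paper's: the same case analysis (cycle/claw forces disjoint unions of paths, $P_6$ bounds path lengths, $P_4+P_2$ handles multiple nontrivial components) invoking the same Theorems \ref{thm:main}, \ref{thm:p5rp1}, \ref{thm:cycle}, \ref{thm:claw}, \ref{thm:p6} and \ref{thm:p4p2}. Your added observations (that Theorem \ref{thm:p5p2} is not needed, and that $P_6$-hardness cannot be deduced from $P_4+P_2$-hardness) are accurate but do not change the route.
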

\begin{proof}[Proof of Theorem \ref{thm:dichotomy} assuming Theorem \ref{thm:main}.]
	If $H$ is an induced subgraph of $rP_3$, then the result follows from Theorem \ref{thm:main}, and if $H$ is an induced subgraph of $P_5+rP_1$, then the result follows from Theorem \ref{thm:p5rp1}. So we may assume that neither is the case. 
	
	If $H$ is not a disjoint union of paths, then either $H$ is not a forest, in which case the result follows from Theorem \ref{thm:cycle}, or $H$ is a forest with a vertex of degree at least three, and so the result follows from Theorem \ref{thm:claw}. Therefore, we may assume that $H$ is a disjoint union of paths. Since $H$ is not an induced subgraph of $rP_3$, there is a connected component $C$ of  $H$ which is isomorphic to $P_t$ for some $t\geq 4$. If $t\geq 6$, then the result follows from the first bullet of Theorem \ref{thm:p6}. So we may assume that $t\in \{4,5\}$. Now, since $H$ is not an induced subgraph of $P_5+rP_1$, it follows that $H-V(C)$ contains an edge. But now $H$ contains $P_4+P_2$ as an induced subgraph, and thus the result follows from the fourth bullet of Theorem \ref{thm:p6}. This completes the proof.
\end{proof}

As a hardness counterpart to Theorem \ref{thm:main}, using a reduction similar to the one in \cite{LkP5+rP1&L5P4+P2-NP}, we also show that:
\begin{theorem} \label{thm:hardness}
	The \textsc{$k$-Coloring Problem} restricted to $2P_4$-free graphs (and hence $rP_4$-free graphs for every fixed $r\geq 2$) is \textsf{NP}-complete for all $k\geq 5$.
\end{theorem}
In view of Theorem \ref{thm:p5p2}, this leaves open the complexity of the \textsc{$5$-Coloring Problem} restricted to $H$-free graphs only if exactly one connected component $C$ of $H$ isomorphic to $P_4$, and $H-V(C)$ is an induced subgraph of $rP_3$ containing at least one edge for some $r\geq 1$.

The remainder of this paper is organized as follows. In Sections \ref{frugsec}-\ref{23sec}, we prepare the tools required for the proof of Theorem \ref{thm:main}. In Section \ref{mainsec}, we prove Theorem \ref{thm:main}, and finally in Section \ref{sec:hardness}, we prove Theorem \ref{thm:hardness}. It is worth noting that the main results of Sections \ref{frugsec} and \ref{goodsec}, namely Theorems \ref{frugality} and \ref{goodp3theorem}, respectively, are in fact proved for the \textsc{List-$k$-Coloring Problem} restricted to $rP_3$-free graphs with arbitrary $k$. However, our results from Section \ref{23sec} fail to extend to this general setting for $k\geq 6$. On the other hand, we were not able to decide whether there exists $k\in \mathbb{N}$ for which the \textsc{List-$k$-Coloring Problem} restricted to $rP_3$-free graphs is \textsf{NP}-hard. 

\section{Refinements, profiles and Frugality}\label{frugsec}
We begin with introducing the notions of a \textit{refinement} and a \textit{profile} as a unified terminology we employ pervasively in this paper. Let $k\in \mathbb{N}$  and  $(G,L)$ be an instance of the \textsc{List-$k$-Coloring Problem}. By a $(G,L)$\textit{-refinement} we mean an instance $(G',L')$ of the \textsc{List-$k$-Coloring Problem} where $G'$ is an induced subgraph of $G$ and $L'(v)\subseteq L(v)$ for all $v\in V(G')$. The $(G,L)$-refinement $(G',L')$ is \textit{spanning} if $G'=G$. Also, a $(G,L)$\textit{-profile} is a set of $(G,L)$-refinements. A $(G,L)$-profile is \textit{spanning} if all its elements are spanning. A large portion of this work deals with how the feasibility of an instance of the \textsc{List-$k$-Coloring Problem} is tied to the feasibility of certain refinements. For example, the following is easily observed.
\begin{lemma}\label{spanspread}
		Let $k\in \mathbb{N}$ be fixed and $(G,L)$ be an instance of the \textsc{List-$k$-Coloring Problem} and $(G,L')$ be a spanning $(G,L)$-refinement. If $G$ admits an $L'$-coloring, the $G$ admits an $L$-coloring.
\end{lemma}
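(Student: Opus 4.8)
The plan is to exhibit the very same coloring. Suppose $G$ admits an $L'$-coloring, and fix such a coloring $\phi$. By the definition of an $L'$-coloring, $\phi$ is a proper $k$-coloring of $G$ with $\phi(v)\in L'(v)$ for every $v\in V(G)$. I would then invoke the defining properties of a spanning refinement.

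Since $(G,L')$ is a spanning $(G,L)$-refinement, the underlying graph is unchanged (so properness of $\phi$ with respect to $E(G)$ is already in hand) and, crucially, $L'(v)\subseteq L(v)$ for every $v\in V(G)$. Combining this containment with the list constraint satisfied by $\phi$, I would conclude that $\phi(v)\in L'(v)\subseteq L(v)$ for every $v\in V(G)$. Thus $\phi$ is a proper $k$-coloring of $G$ meeting the lists prescribed by $L$, which is exactly the assertion that $\phi$ is an $L$-coloring of $G$. Hence $G$ admits an $L$-coloring.

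There is no real obstacle here: the statement is immediate from unwinding the definitions of a spanning refinement and of an $L$-coloring, with the single point requiring attention being the inclusion $L'(v)\subseteq L(v)$ that licenses reusing $\phi$ verbatim. I would keep the write-up to these few lines, since the content is entirely a matter of checking that enlarging the lists from $L'$ to $L$ cannot destroy the validity of an already-valid coloring.
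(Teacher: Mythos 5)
Your proof is correct and matches exactly the argument the paper has in mind: the paper states this lemma without proof as an easy observation, and the intended justification is precisely your unwinding of the definitions, namely that $L'(v)\subseteq L(v)$ for all $v\in V(G)$ lets any $L'$-coloring serve verbatim as an $L$-coloring.
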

Let $k\in \mathbb{N}$  and $(G,L)$ be an instance of the \textsc{List-$k$-Coloring Problem}. An $L$-coloring $\phi$ of $G$ is said to be \textit{frugal} if for all $v\in V(G)$ and every $i\in L(v)$, $v$ has at most one neighbor in $\phi^{-1}(i)$. This could be viewed as a list-variant of the so-called \textit{frugal coloring} introduced by Hind et al \cite{frugal}. Also, it is crucially different from another list-variant of frugal coloring studied in \cite{listfrugal}, where the restriction applies to all colors, not just those in the list of $v$. The following lemma is straightforward to verify.
\begin{lemma}\label{frugalspread}
	Let $k\in \mathbb{N}$ be fixed and $(G,L)$ be an instance of the \textsc{List-$k$-Coloring Problem}, $(G',L')$ be a $(G,L)$-refinement, and $\phi$ be a frugal $L$-coloring of $G$. If $\phi(v)\in L'(v)$ for every $v\in V(G')$ (that is, $\phi|_{V(G')}$ is an $L'$-coloring of $G'$), then it is a frugal $L'$-coloring of $G'$.
\end{lemma}
Note that for an instance $(G,L)$ of the \textsc{List-$k$-Coloring Problem}, if $|L(v)|=1$ for some $v\in V(G)$, then we may remove $v$ from $G$ and also remove the single color in $L(v)$ from the lists of all neighbors of $v$ in $G$, obtaining an instance with the same state of feasibility. To remain precise, let us state this simple observation formally, as follows. 
\begin{theorem}\label{kill1}
				Let $k\in \mathbb{N}$ be fixed and  $(G,L)$ be an instance of the \textsc{List-$k$-Coloring Problem}. Then there exists a $(G,L)$-refinement $(\hat{G},\hat{L})$ with the following specifications.
			\begin{itemize}
				\item $(\hat{G},\hat{L})$ can be computed from $(G,L)$ in time $\mathcal{O}(|V(G)|^2)$.
				\item $|\hat{L}(v)|\neq 1$ for all $v\in V(\hat{G})$.
				\item If $G$ admits a frugal $L$-coloring, then $\hat{G}$ admits a frugal $\hat{L}$-coloring. 
				\item If $\hat{G}$ admits an $\hat{L}$-coloring, then $G$ admits an $L$-coloring.
			\end{itemize}
		\end{theorem}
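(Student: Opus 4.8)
The plan is to apply the singleton-elimination rule described in the paragraph preceding the statement, iterated to exhaustion. Formally, I would build a sequence of refinements $(G_0,L_0)=(G,L),(G_1,L_1),\dots$ as follows: given $(G_i,L_i)$, if some vertex $v$ has $|L_i(v)|=1$, say $L_i(v)=\{c_v\}$, then set $G_{i+1}=G_i-v$ and $L_{i+1}(u)=L_i(u)\setminus\{c_v\}$ for every neighbour $u$ of $v$ (leaving all other lists unchanged); otherwise stop and output $(\hat G,\hat L)=(G_i,L_i)$. Since each step deletes a vertex, the process stops after at most $|V(G)|$ steps, and at termination no vertex has a list of size exactly $1$, which gives the second bullet. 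Writing $D=\{v_1,\dots,v_m\}$ for the deleted vertices in order of deletion and $c_{v_j}$ for the color witnessing the deletion of $v_j$, one checks directly that $\hat G=G-D$ and $\hat L(v)=L(v)\setminus\{c_{v_j}:v_j\in D\cap N_G(v)\}$, so $(\hat G,\hat L)$ is a genuine $(G,L)$-refinement.

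The engine of the correctness proof is a single invariant, proved by induction on the deletion order: in every $L$-coloring $\phi$ of $G$ one has $\phi(v_j)=c_{v_j}$ for all $j$. Indeed, when $v_j$ is deleted its current list equals $L(v_j)$ with the colors $c_{v_i}$ (for $i<j$ with $v_i\sim v_j$) removed, and this current list is $\{c_{v_j}\}$; since $\phi$ is proper and, by induction, $\phi(v_i)=c_{v_i}$ for $i<j$, the value $\phi(v_j)$ lies in exactly this current list, forcing $\phi(v_j)=c_{v_j}$.

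From here the last two bullets are short. For soundness (fourth bullet), given an $\hat L$-coloring $\psi$ of $\hat G$, I would extend it to $\phi$ on $V(G)$ by $\phi(v_j)=c_{v_j}$ and verify properness edge by edge: edges inside $\hat G$ are fine because $\psi$ is proper; an edge from $u\in V(\hat G)$ to a deleted $v_j$ is fine because $c_{v_j}$ was removed from the (surviving) list of $u$, so $\psi(u)\neq c_{v_j}$; and an edge between deleted $v_i,v_j$ with $i<j$ is fine because, $v_j$ being present when $v_i$ is deleted, $c_{v_i}$ was removed from the list of $v_j$, so $c_{v_j}\neq c_{v_i}$. For the third bullet, given a frugal $L$-coloring $\phi$ of $G$, the invariant gives $\phi(v_j)=c_{v_j}$, and for $v\in V(\hat G)$ properness yields $\phi(v)\neq c_{v_j}=\phi(v_j)$ for every deleted neighbour $v_j$, whence $\phi(v)\in\hat L(v)$; thus $\phi|_{V(\hat G)}$ is an $\hat L$-coloring of $\hat G$, and it is frugal by Lemma \ref{frugalspread}. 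Finally, maintaining a list of the current singleton-vertices and, at each deletion, touching only the neighbours of the deleted vertex (each list update costing $\mathcal{O}(1)$ since $k$ is fixed) bounds the total work by $\mathcal{O}(\sum_{v\in V(G)}|N_G(v)|)=\mathcal{O}(|V(G)|^2)$, giving the first bullet.

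The argument is essentially bookkeeping, so the only real care lies in the ordering. The step I expect to be most error-prone is the soundness check for an edge between two deleted vertices: one must use that the earlier-deleted endpoint strikes its witnessing color from the list of the later-deleted endpoint \emph{while the latter is still present}, which is precisely what forces the two witnessing colors to differ. The frugality half is painless once the restriction is known to be a valid $\hat L$-coloring, since that conclusion is then handed to us directly by Lemma \ref{frugalspread}.
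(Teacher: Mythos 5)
Your proposal is correct and takes essentially the same approach as the paper: the paper's own proof is exactly this iterated singleton-elimination procedure, stated only as a sketch with the verification ``left to the reader.'' Your deletion-order invariant ($\phi(v_j)=c_{v_j}$ in every $L$-coloring), the edge-by-edge properness check for the extension, and the appeal to Lemma \ref{frugalspread} for frugality are precisely the omitted details, and they are all sound.
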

		\begin{proof}
			The proof is easy, so we only give a sketch and leave it to the reader to check the details. One can find in time $\mathcal{O}(|V(G)|)$ a vertex $v\in V(G)$ with  $|L(v)|=1$, or confirm that there is none. In the former case, we replace $G$ by $G-v$ and $L(w)$ by $L(w)\setminus L(v)$ for every vertex $w\in N_G(v)$. In the latter case, we output the current instance and stop. Applying the same procedure iteratively, it is straightforward to check that we obtain a $(G,L)$-refinement $(\hat{G},\hat{L})$ satisfying Theorem \ref{kill1}. This completes the proof. 
		\end{proof}
		The main goal of this section, though, is to establish a reduction from list-coloring to frugal list-coloring restricted to $rP_3$-free graphs. To achieve this, we need the main result of \cite{HGL}, the statement of which calls for a few definitions. A \textit{hypergraph} $H$ is an ordered pair $(V(H),E(H))$ where $V (H)$ is a finite set of vertices and $E(H)$ is a collection of nonempty subsets of $V (H)$, usually referred to as \textit{hyperedges}. A \textit{matching} in $H$ is a set of pairwise disjoint hyperedges, and a \textit{vertex-cover} in $H$ is a set of vertices meeting every hyperedge. We denote by $\nu(H)$ the maximum size of a matching in $H$, and by $\tau(H)$ the minimum size of a vertex-cover in $H$. Also, we denote by $\Lambda(H)$ the maximum $k\geq 2$ for which there exists hyperedges $e_1,...,e_k\in E(H)$ with the following property. For all distinct $i,j\in [k]$, there exists a vertex $v_{i,j}\in e_i\cap e_j$ which belongs to no other hyperedge among $e_1,\ldots, e_k$. If there is no such $k$ (that is, if the elements of $E(H)$ are mutually disjoint), then we set $\Lambda (H)=2$.

\begin{theorem}[Ding, Seymour and Winkler\cite{HGL}]\label{HGL}
	For every hypergraph $H$, we have
	\[\tau(H) \leq  11\Lambda (H)^2(\Lambda(H)+\nu(H)+3)\binom{\Lambda(H) + \nu(H) }{\nu(H)}^2\cdot\]
\end{theorem}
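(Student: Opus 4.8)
The plan is to prove the bound by induction on the matching number $\nu=\nu(H)$, using the crossing parameter $\lambda=\Lambda(H)$ to limit, at each stage, how many vertices we must commit to the cover before passing to a smaller hypergraph. Write $f(\lambda,\nu)$ for the claimed right-hand side. The base case is $\nu(H)\le 1$: if $E(H)=\emptyset$ then $\tau(H)=0$, while if $\nu(H)=1$ the hyperedges pairwise intersect, and here one needs the genuinely nontrivial fact that an intersecting family with $\Lambda=\lambda$ has transversal number bounded by a function of $\lambda$ alone. That $\lambda$ is really needed is shown by projective planes, where $\nu=1$ but $\tau$ is unbounded; crucially such families also have unbounded $\Lambda$, since lines in general position meet in private points. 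I would establish this base case by a greedy extraction: build a crossing family one hyperedge at a time, using that an intersecting family with no short transversal must admit a new hyperedge meeting the current family in a fresh private vertex, so the process cannot run longer than $\lambda$ steps.

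For the inductive step the engine is a progress lemma, which I expect to be the crux of the whole proof: there is a set $X\subseteq V(H)$ with $|X|\le g(\lambda,\nu)$, for a suitable function $g$, such that the sub-hypergraph $H'=H-X$ consisting of the hyperedges of $H$ disjoint from $X$ satisfies $\nu(H')\le\nu-1$. Note that $\Lambda(H')\le\lambda$ is then automatic, since any crossing family in $H'$ is one in $H$; so the substantive content is the strict drop in the matching number. Granting the lemma, $\tau(H)\le|X|+\tau(H')\le g(\lambda,\nu)+f(\lambda,\nu-1)$, and the remaining task is arithmetic: choose $g$ and the exact form of $f$ so that this recurrence closes. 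The hockey-stick identity $\sum_{j}\binom{\lambda+j}{j}=\binom{\lambda+\nu+1}{\nu}$ is what makes the binomial coefficient appear, while obtaining the \emph{square} $\binom{\lambda+\nu}{\nu}^2$ forces the per-step cost $g$ to itself carry a binomial factor, meaning the progress lemma is likely proved by a nested recursion rather than in one shot.

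To produce the set $X$ I would start from a maximal crossing family $e_1,\dots,e_m$ witnessing $\Lambda$, so $m\le\lambda$, together with private vertices $v_{ij}\in e_i\cap e_j$ for $i<j$. Maximality blocks every remaining hyperedge from extending this family, which tightly constrains how it can meet the $e_i$ and their private vertices. Seeding $X$ with these $\binom{\lambda}{2}$ private vertices and a bounded shell around them, the goal is to show that every hyperedge surviving in $H-X$ can be charged against a fixed maximum matching so that the residual matching number must decrease. I expect this charging to run in two passes—first neutralizing the interaction with the crossing family, then reducing the matching—each contributing one factor of $\binom{\lambda+\nu}{\nu}$, which would be the structural origin of the square.

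The principal obstacle is exactly this progress lemma. The difficulty is that $\Lambda$ encodes only a local, pairwise condition (no $\lambda+1$ hyperedges cross in private vertices), whereas the matching number and any vertex cover are global objects; guaranteeing $\nu(H-X)<\nu$ by a bounded deletion is not a one-line greedy step, because deleting one vertex from each edge of a single maximum matching leaves open the existence of an entirely different matching of the same size avoiding $X$. Converting the local obstruction into a bounded global progress set therefore requires a careful maximality-and-exchange argument. The secondary difficulty is quantitative: landing on the precise constant $11\lambda^2(\lambda+\nu+3)\binom{\lambda+\nu}{\nu}^2$ rather than a cruder explicit bound hinges on tuning $g$ and the induction hypothesis so that Pascal's rule absorbs every error term, and I would expect most of the low-level effort to go into this bookkeeping.
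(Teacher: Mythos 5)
First, a point of order: the paper does not prove Theorem \ref{HGL} at all --- it is imported verbatim from Ding, Seymour and Winkler \cite{HGL} and used as a black box (the paper only proves Lemma \ref{biplem} by applying it). So there is no in-paper proof to compare yours against; the question is whether your proposal stands on its own, and it does not. The decisive gap is that you defer the entire content of the theorem to your ``progress lemma'' and then leave it unproven --- you yourself call it ``the principal obstacle.'' That lemma is essentially a restatement of the theorem: given the theorem, a minimum vertex cover serves as $X$ (then $H-X$ has no edges at all); conversely, given the lemma, the theorem follows by the one-line induction you describe. Announcing the recurrence $\tau(H)\le g(\Lambda,\nu)+f(\Lambda,\nu-1)$ therefore carries no weight until $X$ is constructed, and your sketch of the construction (seed $X$ with the $\binom{\Lambda}{2}$ private vertices of a maximal crossing family plus ``a bounded shell,'' then charge surviving edges against a fixed maximum matching) runs into exactly the difficulty you name but do not resolve: a surviving edge may meet the crossing family only inside pairwise intersections $e_i\cap e_j$, which can be arbitrarily large sets, and a maximum matching avoiding $X$ need not interact at all with the one fixed matching you charge against.

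The base case $\nu(H)=1$ contains the same flaw in miniature, so your induction has no floor to stand on. Your greedy extraction asserts that if $\tau$ is large, the current crossing family $e_1,\dots,e_k$ can be extended by a new edge with ``fresh private'' intersections. But choosing an edge $e$ that avoids the finitely many old private vertices $v_{ij}$ only guarantees $e\cap e_i\neq\emptyset$ for each $i$ (by the intersecting property); it does not guarantee a vertex of $e\cap e_i$ lying outside $\bigcup_{l\neq i}e_l$, and nothing prevents $e\cap e_i\subseteq e_j$ for some $j\neq i$. You cannot repair this by making $e$ avoid the sets $e_i\cap e_j$ wholesale, since those sets are unbounded and avoiding them is not a finite-transversal condition. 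So even the intersecting case --- which already carries the essential difficulty, as your projective-plane remark illustrates --- is not established. Finally, even granting both missing lemmas, the specific bound $11\Lambda(H)^2(\Lambda(H)+\nu(H)+3)\binom{\Lambda(H)+\nu(H)}{\nu(H)}^2$ is never derived; you only indicate that hockey-stick summation ``should'' produce it. As written, the proposal is a plan that correctly locates where the work lies, but it proves neither the base case nor the inductive step.
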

We apply Theorem  \ref{HGL} to prove the following.
\begin{lemma}\label{biplem}
		Let $r\in \mathbb{N}$ and $\eta(r)=11(r+1)^2(2r+3)\binom{2r}{r-1}$. Let $G$ be an $rP_3$-free graph and $A,B$ be two disjoint stable sets in $G$, such that every vertex in $B$ has at least two neighbors in $A$. Then there exists $S\subseteq A$ with $|S|\leq \eta(r)$ such that every vertex in $B$ has a neighbor in $S$.
		\end{lemma}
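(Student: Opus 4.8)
The plan is to recast the conclusion as a vertex-cover bound for an auxiliary hypergraph and then invoke Theorem \ref{HGL}. I would define a hypergraph $H$ with vertex set $V(H)=A$ whose hyperedges are the sets $N_G(b)\cap A$ for $b\in B$ (keeping one copy of each distinct such set). By hypothesis every $b\in B$ has at least two neighbors in $A$, so every hyperedge has size at least two, and a set $S\subseteq A$ meets every hyperedge if and only if every vertex of $B$ has a neighbor in $S$. Thus a vertex-cover of $H$ is exactly the set we seek, and it suffices to prove $\tau(H)\le \eta(r)$, taking $S$ to be a minimum vertex-cover. Since $11\Lambda(H)^2=11(r+1)^2$, $\Lambda(H)+\nu(H)+3=2r+3$ and $\binom{\Lambda(H)+\nu(H)}{\nu(H)}=\binom{2r}{r-1}$ at the values $\Lambda(H)=r+1$, $\nu(H)=r-1$, and since the right-hand side of Theorem \ref{HGL} is monotone in $\Lambda$ and $\nu$, the entire proof reduces to establishing the two bounds $\nu(H)\le r-1$ and $\Lambda(H)\le r+1$.

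For the matching bound, I would argue by contradiction: assuming $\nu(H)\ge r$, fix pairwise disjoint hyperedges $e_1,\dots,e_r$ realized by vertices $b_1,\dots,b_r\in B$ with $N_G(b_i)\cap A=e_i$, and for each $i$ choose two vertices $a_i,a_i'\in e_i$. Then $a_i\mh b_i\mh a_i'$ is an induced $P_3$ because $A$ is stable, and the $r$ paths are vertex-disjoint since the $e_i$ are disjoint and the $b_i$ are distinct. They are pairwise anticomplete: two chosen $A$-vertices are nonadjacent ($A$ stable), two $b$'s are nonadjacent ($B$ stable), and $a_i\not\sim b_j$ for $j\ne i$ because $a_i\in e_i$ and $e_i\cap e_j=\emptyset$ force $a_i\notin N_G(b_j)$. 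This yields an induced $rP_3$, a contradiction, so $\nu(H)\le r-1$.

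For the $\Lambda$ bound I would exploit the private intersection vertices directly, using a star rather than a matching construction to get the sharper constant. Suppose $\Lambda(H)\ge r+2$, and take hyperedges $e_1,\dots,e_{r+2}$ together with, for each pair $i\ne j$, a vertex $v_{i,j}\in e_i\cap e_j$ lying in no other $e_\ell$; let $b_1,\dots,b_{r+2}\in B$ realize these hyperedges. The defining property of $v_{i,j}$ says precisely that, among $b_1,\dots,b_{r+2}$, the vertex $v_{i,j}$ is adjacent to $b_i$ and $b_j$ and to no other. For $m\in[r]$ I set $P^{(m)}=v_{m,\,r+1}\mh b_m\mh v_{m,\,r+2}$; each is an induced $P_3$ since its two $A$-endpoints are nonadjacent and both are neighbors of $b_m$. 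These paths are vertex-disjoint (distinct centers $b_1,\dots,b_r$, and distinct private vertices since different index pairs give different vertices), and pairwise anticomplete: all the $A$-vertices are mutually nonadjacent, and $v_{m,r+1}$ (resp.\ $v_{m,r+2}$) can be adjacent to a center $b_{m'}$ only if $m'\in\{m,r+1\}$ (resp.\ $\{m,r+2\}$), which for $m'\le r$ forces $m'=m$. Hence $P^{(1)},\dots,P^{(r)}$ form an induced $rP_3$, a contradiction, so $\Lambda(H)\le r+1$.

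With $\nu(H)\le r-1$ and $\Lambda(H)\le r+1$ in hand, Theorem \ref{HGL} gives $\tau(H)\le \eta(r)$, finishing the argument. I expect the main obstacle to be the $\Lambda$ bound: the naive matching-style construction only yields $\Lambda(H)\le 2r-1$, which is too weak for the stated $\eta(r)$, and the point is to notice that a single hyperedge can host the \emph{center} of a $P_3$ whose two \emph{leaves} are private vertices, so that $r$ centers together with just two shared ``leaf'' indices suffice to assemble the forbidden $rP_3$. Getting the anticompleteness bookkeeping right for these stars—precisely, that each private vertex sees only its two prescribed vertices of $B$—is where the care will be needed.
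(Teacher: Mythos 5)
Your proof is correct and is essentially identical to the paper's: the same auxiliary hypergraph on $A$ with hyperedges $N_G(b)\cap A$, the same bounds $\nu(H)\le r-1$ and $\Lambda(H)\le r+1$ established by exhibiting an induced $rP_3$ (including the same ``star'' construction $v_{m,r+1}\mh b_m\mh v_{m,r+2}$ for the $\Lambda$ bound), followed by Theorem \ref{HGL}. Your write-up is merely more explicit about the anticompleteness checks and the monotonicity of the bound, which the paper leaves implicit.
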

	\begin{proof}
		For every vertex in $b\in B$, let $A(b)=N_G(b)\cap A$. Let $H$ be the hypergraph with $V(H)=A$ and $E(H)=\{A(b): b\in B\}$.
		
	 \sta{\label{nubndd}We have $\nu(H)\leq r-1$.}
		
	Suppose not. Let $b_1,\ldots, b_r\in B$ be distinct such that hyperedges $A(b_1)\, \ldots, A(b_r)$ of $H$ are pairwise disjoint. By the assumption, for each $i\in [r]$, there exist two distinct vertices $a_i,c_i\in  A(b_i)$. But then $G|\{a_i,b_i,c_i:i\in [r]\}$ is isomorphic to $rP_3$, a contradiction. This proves \eqref{nubndd}.
		
		\sta{\label{lambndd} We have $\Lambda (H)\leq r+1$.}
		
	For otherwise $\Lambda (H)\geq r+2\geq 3$, and so there exist distinct vertices $b_1,\ldots, b_{r+2}\in B$ with the following property. For all distinct $i,j\in [r+2]$, there exists a vertex $c_{i,j}\in A(b_i)\cap A(b_j)$ which belongs to no other set among $A(b_1), \ldots, A(b_{r+2})$. But then $G|\{b_i,c_{i,r+1},c_{i,r+2}:i\in [r]\}$ is isomorphic to $rP_3$, a contradiction. This proves \eqref{lambndd}.\vsp
		
	From \eqref{nubndd} and \eqref{lambndd} combined with Theorem \ref{HGL}, we obtain $\tau(H)\leq 11(r+1)^2(2r+3)\binom{2r}{r-1}=\eta(r)$, and so $H$ has a vertex-cover of size at most $\eta(r)$. In other words, there exists $S\subseteq A$ with $|S|\leq \eta(r)$ such that for every vertex $b\in B$, $S\cap A(b)\neq \emptyset$; that is, every vertex in $B$ has a neighbor in $S$. This completes the proof of Lemma \ref{biplem}. 
	\end{proof}
Now we can prove the main theorem of this section.
	\begin{theorem}\label{frugality}
		For all fixed $k,r\in \mathbb{N}$, there exists $\pi(k,r)\in \mathbb{N}$ with the following property. Let $(G,L)$ be an instance of the \textsc{List-$k$-Coloring Problem} where $G$ is $rP_3$-free. Then there exists a spanning $(G,L)$-profile $\Pi(G,L)$ with the following specifications.
		\begin{itemize}
			 \item \sloppy $|\Pi(G,L)|\leq \mathcal{O}(|V(G)|^{\pi(k,r)})$ and $\Pi(G,L)$ can be computed from $(G,L)$ in time $ \mathcal{O}(|V(G)|^{\pi(k,r)})$.
			\item If $G$ admits an $L$-coloring, then for some $(G,L')\in \Pi(G,L)$, $G$ admits a frugal $L'$-coloring.
		\end{itemize}
	\end{theorem}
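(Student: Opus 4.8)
The plan is to build the profile $\Pi(G,L)$ by exhaustively guessing, for each color $i\in[k]$, a small set $S_i\subseteq V(G)$ that will act as a certificate for the vertices which would otherwise violate frugality at color $i$. Given a tuple $\mathbf{S}=(S_1,\dots,S_k)$ of such sets, I define a spanning refinement $(G,L'_{\mathbf{S}})$ by deleting from each vertex's list every color it ``sees'' through $\mathbf{S}$, namely $L'_{\mathbf{S}}(v)=L(v)\setminus\{\,i\in[k]:N_G(v)\cap S_i\neq\emptyset\,\}$. The profile is the set of all $(G,L'_{\mathbf{S}})$ where $\mathbf{S}$ ranges over tuples with $|S_i|\le (k-1)\eta(r)$ for every $i$. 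Since each $S_i$ is a subset of $V(G)$ of bounded size and there are $k$ of them, there are $\mathcal{O}(|V(G)|^{k(k-1)\eta(r)})$ such tuples, and each refinement is computable in time $\mathcal{O}(|V(G)|^2)$; so both $|\Pi(G,L)|$ and the construction time are polynomial, and one may take, for instance, $\pi(k,r)=k(k-1)\eta(r)+2$.

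It remains to show that if $G$ admits an $L$-coloring $\phi$, then some refinement in the profile admits a frugal coloring; I will exhibit the correct guess $\mathbf{S}^{\ast}$ and argue that $\phi$ itself works. For a color $i$, write $A_i=\phi^{-1}(i)$, a stable set, and call a vertex $v$ \emph{bad for $i$} if $i\in L(v)$ and $v$ has at least two neighbors in $A_i$. Every vertex bad for $i$ is colored with some $j\neq i$ since $\phi$ is proper, so the bad vertices for $i$ partition into the stable sets $B_i^{(j)}=\{v\in\phi^{-1}(j):v\text{ is bad for }i\}$ over $j\neq i$. For each such $j$ I apply Lemma \ref{biplem} with $A=A_i$ and $B=B_i^{(j)}$: these are disjoint stable sets and every vertex of $B_i^{(j)}$ has at least two neighbors in $A_i$, so I obtain $S_i^{(j)}\subseteq A_i$ with $|S_i^{(j)}|\le\eta(r)$ meeting the neighborhood of each vertex of $B_i^{(j)}$. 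Setting $S_i^{\ast}=\bigcup_{j\neq i}S_i^{(j)}$ gives $|S_i^{\ast}|\le(k-1)\eta(r)$, and every vertex bad for $i$ has a neighbor in $S_i^{\ast}$; hence $\mathbf{S}^{\ast}=(S_1^{\ast},\dots,S_k^{\ast})$ is one of the guessed tuples.

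I then verify the two required properties of $(G,L'_{\mathbf{S}^{\ast}})$ for $\phi$. First, $\phi$ remains an $L'_{\mathbf{S}^{\ast}}$-coloring: the only colors deleted from $L(v)$ are those $i$ with $N_G(v)\cap S_i^{\ast}\neq\emptyset$, and since $S_i^{\ast}\subseteq A_i$ this means $v$ has a neighbor colored $i$; as $\phi$ is proper, $\phi(v)$ is never such a color, so $\phi(v)\in L'_{\mathbf{S}^{\ast}}(v)$. Second, $\phi$ is frugal: if some $v$ had two neighbors colored $i$ with $i\in L'_{\mathbf{S}^{\ast}}(v)\subseteq L(v)$, then $v$ would be bad for $i$, hence would have a neighbor in $S_i^{\ast}$, forcing $i$ to be deleted from its list, a contradiction. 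Thus $\phi$ is a frugal $L'_{\mathbf{S}^{\ast}}$-coloring, as required.

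The step I expect to be the crux is the design of the deletion rule, and in particular the observation that a \emph{hitting set} — one detected neighbor per bad vertex — suffices, rather than a set preserving two neighbors of each bad vertex. The stronger, two-neighbor notion can be forced to have unbounded size even in $2P_3$-free graphs (take a single $A$-vertex adjacent to many $B$-vertices, each with its own private second $A$-neighbor), so it is essential that over-deletion is harmless: deleting color $i$ from $v$ whenever $v$ sees $S_i\subseteq A_i$ never removes $\phi(v)$, by properness, while Lemma \ref{biplem} — applied after partitioning the bad vertices into $\phi$-color classes to restore stability — guarantees the single cover is small. Everything else is bookkeeping on the number and computation time of the guesses.
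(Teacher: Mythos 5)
Your proof is correct and follows essentially the same route as the paper's: guess a bounded-size set $S_i$ for each color, use Lemma \ref{biplem} applied to the color classes of a hypothetical $L$-coloring (partitioning the violating vertices by their $\phi$-color to restore stability) to show a correct guess exists, and delete from each list every color seen through the guessed sets. The only differences are cosmetic simplifications: the paper additionally restricts the guessed tuples to disjoint stable sets $S_i\subseteq L^{(i)}$ and fixes $L_S(v)=\{i\}$ for $v\in S_i$, but as your argument shows, these extra constraints are not needed for the stated conclusion.
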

	\begin{proof}
	Let $\eta(r)$ be as in Lemma \ref{biplem}. Let $\mathcal{S}$ be the set of all $k$-tuples $(S_1,\ldots, S_k)$ of subsets of $V(G)$ where
		\begin{itemize}
			\item[(S1)]  $S_i\subseteq L^{(i)}$ and $|S_i|\leq (k-1)\eta(r)$ for all $i\in [k]$;
			\item[(S2)] $S_i$ is a stable set; and
			\item[(S3)] $S_i\cap S_j=\emptyset$ for all distinct $i,j\in [k]$.	
	\end{itemize}
		For each $S=(S_1,\ldots, S_k)\in \mathcal{S}$, we define a $k$-list-assignment $L_S$ of $G$ as follows. Let $v \in V(G)$.	
		\begin{itemize}
			\item[(L1)] If $v\in S_i$ for some  $i\in [k]$, then let $L_S(v)=\{i\}$.
			\item[(L2)] Otherwise, if $v\in V(G)\setminus (\bigcup_{i=1}^kS_i)$, then let $L_S(v)=L(v)\setminus \{i\in [k]: N_G(v)\cap S_i\neq \emptyset\}$.
		\end{itemize}
	
	 This definition immediately yields the following.\\
		
		\sta{\label{inotin}For all $S=(S_1,\ldots, S_k)\in \mathcal{S}$, $i\in [k]$ and $v\in V(G)\setminus S_i$ with a neighbor in $S_i$, we have $i\notin L_S(v)$.}
		
		Note that for every $S\in \mathcal{S}$, $(G,L_S)$ is a spanning $(G,L)$-refinement. Consider the spanning $(G,L)$-profile $\Pi(G,L)=\{(G,L_S):S\in \mathcal{S}\}$.\\
		
		\sloppy\sta{\label{frugrun}$|\Pi(G,L)|\leq \mathcal{O}(|V(G)|^{k(k-1)\eta(r)})$ and $\Pi(G,L)$ can be computed from $(G,L)$ in time $ \mathcal{O}(|V(G)|^{k(k-1)\eta(r)+2})$.}
		
	Let $\mathcal{T}$ be the set of all $k$-tuples $(S_1,\ldots, S_k)$ of subsets of $V(G)$ satisfying (S1). Then for each $i\in [k]$, there are at most $((k-1)\eta(r)+1)|V(G)|^{(k-1)\eta(r)}$ possibilities for $S_i$. As a result, we have  $|\mathcal{T}|\leq ((k-1)\eta(r)+1)^k|V(G)|^{k(k-1)\eta(r)}$, which along with $|\Pi(G,L)|=|\mathcal{S}|$ and $\mathcal{S}\subseteq \mathcal{T}$ proves the first assertion. For the second, it is straightforward to observe that the elements of $\mathcal{T}$ can be enumerated in time $\mathcal{O}(|\mathcal{T}|)$. Then, for each $S=(S_1,\ldots, S_k)\in \mathcal{T}$, one can check in constant time whether $S$ satisfies (S2) and (S3). Thus, $\mathcal{S}$ can be computed in time $\mathcal{O}(|\mathcal{T}|)$. Also, for each $S\in \mathcal{S}$ and every $v\in V(G)$, it is readily seen from (L1) and (L2) that $L_S(v)$ can be computed in time $\mathcal{O}(|V(G)|)$. Therefore, $\Pi(G,L)$ can be computed from $(G,L)$ in time $\mathcal{O}(|\mathcal{T}||V(G)|^2)= \mathcal{O}(|V(G)|^{k(k-1)\eta(r)+2})$. This proves \eqref{frugrun}.\vsp

	\sta{\label{applybiplem}Let $\phi$ be an $L$-coloring of $G$ and $i,j\in [k]$ be distinct. Let $B_{i,j}$ be the set of all vertices in $\phi^{-1}(j)$ with at least two neighbors in $\phi^{-1}(i)$. Then there exists $S_{i,j}\subseteq \phi^{-1}(i)$ with $|S_{i,j}|\leq \eta(r)$ such that every vertex in $B_{i,j}$ has a neighbor in $S_{i,j}$.}	
	
	A direct application of Lemma \ref{biplem} to $G$ and stable sets $A=\phi^{-1}(i)$ and $B=B_{i,j}$ proves \eqref{applybiplem}.
		
		\sta{\label{ordtofrug}If $G$ admits an $L$-coloring, then for some $S\in \mathcal{S}$, $G$ admits a frugal $L_S$-coloring.}
		Let $\phi$ be an $L$-coloring of $G$. For distinct $i,j\in [k]$, let $B_{i,j}$ be the set of all vertices in $\phi^{-1}(j)$ with at least two neighbors in $\phi^{-1}(i)$. By \eqref{applybiplem}, there exists $S_{i,j}\subseteq \phi^{-1}(i)$ with $|S_{i,j}|\leq \eta(r)$ such that every vertex in $B_{i,j}$ has a neighbor in $S_{i,j}$. For each $i\in [k]$, let $S_i=\bigcup_{j\in [k],j\neq i}S_{i,j}$. Then from \eqref{applybiplem}, we have $S_i\subseteq \phi^{-1}(i)\subseteq L^{(i)}$, $|S_i|\leq (k-1)\eta(r)$ and  $S_i\cap S_j\subseteq \phi^{-1}(i)\cap \phi^{-1}(j)= \emptyset$ for every $j\in [k]$. It follows that $S=(S_1,\ldots,S_k)$ satisfies both (S1), (S2) and (S3), and so $S\in \mathcal{S}$. Let $L_S$ be the corresponding list-assignment defined in (L1) and (L2). We claim that $\phi$ is a frugal $L_S$-coloring of $G$. To see this, note that being an $L$-coloring, $\phi$ is proper. In addition, for every $v\in V(G)$, if $v\in S_i\subseteq \phi^{-1}(i)$ for some  $i\in [k]$, then by (L1), we have $L_S(v)=\{i\}=\{\phi(v)\} $. Otherwise, if $v\in V(G)\setminus (\bigcup_{i=1}^k S_i)$, then since $\phi^{-1}(\phi(v))$ is a stable set of $G$ containing $S_{\phi(v)}\cup \{v\}$, $v$ has no neighbor in $S_{\phi(v)}$, and so by (L2), we have $\phi(v)\in L_S$. As a result, we have  $\phi(v)\in L_S$ for every $v\in V(G)$, and $\phi$ is in fact an $L_S$-coloring.
	It remains to argue the frugality of $\phi$. For each $i\in [k]$, let $B_i=\bigcup_{j\in [k], j\neq i}B_{i,j}$; that is, $B_i$ is the set of all vertices in $G$ with at least two neighbors in $\phi^{-1}(i)$. Note that $B_i\subseteq V(G)\setminus S_i$.  By \eqref{applybiplem}, every vertex in $B_i$ has a neighbor in $S_i$. Therefore, by \eqref{inotin}, we have $i\notin L_S(v)$ for every $v\in B_i$. In other words, for all $v\in V(G)$ and every $i\in L_S(v)$, $v$ has at most one neighbor in $\phi^{-1}(i)$, and so $\phi$ is frugal. This proves \eqref{ordtofrug}.\vsp

Finally, by setting $\pi(k,r)=k(k-1)\eta(r)+2$, from \eqref{frugrun} and \eqref{ordtofrug}, we conclude that $\Pi(G,L)$ satisfies Theorem \ref{frugality}. This completes the proof.
\end{proof}
\section{Good $P_3$'s}\label{goodsec}
Let $G$ be a graph and $\{x_1,x_2,x_3\}\subseteq V(G)$ with $E(G|\{x_1,x_2,x_3\})=\{x_1x_2,x_2x_3\}$. Then $G|\{x_1,x_2,x_3\}$ is isomorphic to $P_3$, and we refer to it as an \textit{induced $P_3$} in $G$, denoting it by $x_1-x_2-x_3$. Also, for all $Z,W\subseteq V(G)$, we say $Z$ is \textit{complete} (\textit{anticomplete}) to $W$ if $Z\cap W=\emptyset$ and every vertex in $Z$ is adjacent (nonadjacent) to every vertex in $W$. If $Z=\{z\}$ and $Z$ is complete (anticomplete) to $W$, then we say $z$ is \textit{complete} (\textit{anticomplete}) to $W$. For two induced $P_3$'s $P$ and $Q$ in $G$, we say $P$ is \textit{anticomplete} to $Q$ (or \textit{$P$ and $Q$ are anticomplete}), if their vertex sets are anticomplete in $G$.

Let $k\in \mathbb{N}$ be an integer and $\gamma=(I_1,I_2,I_3)$ be a triple of subsets of $[k]$. We say $\gamma$ is \textit{good} if $|I_1|,|I_2|,|I_3|\geq 2$ and $I_1\cap I_2$, $I_1\cap I_3$ and $I_2\cap I_3$ are all nonempty. Let $(G,L)$ be an instance of the \textsc{List-$k$-Coloring Problem} and $x_1-x_2-x_3$ be an induced $P_3$ in $G$. We refer to $(L(x_1),L(x_2),L(x_3))$ as the $L$\textit{-type} of $x_1-x_2-x_3$ and to $|L(x_1)|+|L(x_2)|+|L(x_3)|$ as the $L$\textit{-weight} of $x_1-x_2-x_3$. Also,  an $L$\textit{-good} $P_3$ in $G$ is an induced $P_3 $ with good $L$-type. The following lemma, easy to check, asserts that excluding good $P_3$'s is inherited by refinements.
\begin{lemma}\label{goodspread}
	Let $(G,L)$ be an instance of the \textsc{List-$k$-Coloring Problem} and $(G',L')$ be a $(G,L)$-refinement. Suppose that $G$ has no $L$-good $P_3$. Then $G'$ has no $L'$-good $P_3$.
\end{lemma}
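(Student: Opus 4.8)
The plan is to verify the claim directly from the definitions, since nothing deeper than unwinding the notions of ``$L$-good $P_3$'' and ``$(G,L)$-refinement'' is required. Recall that a good triple $\gamma=(I_1,I_2,I_3)$ demands $|I_1|,|I_2|,|I_3|\geq 2$ together with all three pairwise intersections being nonempty, and that an $L$-good $P_3$ is an induced $P_3$ whose $L$-type is good. So I would prove the contrapositive: assume $G'$ has an $L'$-good $P_3$ and produce an $L$-good $P_3$ in $G$.

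First I would take an $L'$-good $P_3$, say $x_1-x_2-x_3$, in $G'$. Since $(G',L')$ is a $(G,L)$-refinement, $G'$ is an induced subgraph of $G$, so $\{x_1,x_2,x_3\}\subseteq V(G)$ and the adjacency among these three vertices is the same in $G$ as in $G'$; hence $x_1-x_2-x_3$ is still an induced $P_3$ in $G$. The only remaining point is to check that its $L$-type is good. For each $m\in\{1,2,3\}$ the refinement condition gives $L'(x_m)\subseteq L(x_m)$. Therefore $|L(x_m)|\geq |L'(x_m)|\geq 2$ for every $m$, so the size condition is inherited. Likewise, for any two indices $m,n$ we have $L'(x_m)\cap L'(x_n)\subseteq L(x_m)\cap L(x_n)$, so each of the three pairwise intersections $L(x_1)\cap L(x_2)$, $L(x_1)\cap L(x_3)$, $L(x_2)\cap L(x_3)$ contains the corresponding nonempty intersection for $L'$ and is thus itself nonempty.

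It follows that $(L(x_1),L(x_2),L(x_3))$ is a good triple, so $x_1-x_2-x_3$ is an $L$-good $P_3$ in $G$, contradicting the hypothesis that $G$ has no $L$-good $P_3$. This proves the lemma.

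There is no real obstacle here: the entire argument is monotonicity of set size and of intersection under the inclusions $L'(v)\subseteq L(v)$, combined with the fact that induced subgraphs preserve the induced-$P_3$ structure on a fixed vertex set. The only thing to be careful about is to invoke both features of a $(G,L)$-refinement, namely that $G'$ is an \emph{induced} subgraph of $G$ (needed to transport the $P_3$) and that the lists only shrink (needed to transport goodness); neither alone suffices.
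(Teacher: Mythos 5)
Your proof is correct: the paper states this lemma without proof (``easy to check''), and your argument---transporting the induced $P_3$ via the induced-subgraph property of a refinement and inheriting goodness via the inclusions $L'(v)\subseteq L(v)$---is exactly the routine verification the authors leave to the reader. Nothing is missing, and your closing remark that both features of a refinement are needed is an accurate observation.
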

In this section, we show how to reduce an $rP_3$-free instance of the \textsc{List-$k$-Coloring Problem} to polynomially many instances with no good $P_3$ in polynomial time. This goal is attained in Theorem \ref{goodp3theorem}. First, we need two lemmas.
\begin{lemma}\label{goodp3lem1}
	Let $k\in \mathbb{N}$ be fixed and $\gamma=(I_1,I_2,I_3)$ be a good triple of subsets of $[k]$. Let $(G,L)$ be an instance of the \textsc{List-$k$-Coloring Problem} and $x_1-x_2-x_3$ be an induced $P_3$ in $G$ of $L$-type $(I_1,I_2,I_3)$. Then there exists a spanning $(G,L)$-profile  $\Upsilon_1(G,L)$ with the following specifications.
	\begin{itemize}
		\item \sloppy $|\Upsilon_1(G,L)|\leq \mathcal{O}(|V(G)|^{3k-3})$ and $\Upsilon_1(G,L)$ can be computed from $(G,L)$ in time $ \mathcal{O}(|V(G)|^{3k-2})$.
		\item For every $(G,L_1)\in \Upsilon_1(G,L)$, every induced $P_3$ in $G$ of $L_1$-type $(I_1,I_2,I_3)$ is anticomplete to (and thus disjoint from) $x_1-x_2-x_3$.
		\item If $G$ admits a frugal $L$-coloring, then for some $(G,L_1)\in \Upsilon_1(G,L)$, $G$ admits a frugal $L_1$-coloring. 
	\end{itemize}
\end{lemma}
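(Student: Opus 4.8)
The plan is to branch over the possible colors of $x_1,x_2,x_3$ in a hypothetical frugal $L$-coloring, together with a small amount of information about their neighborhoods that frugality makes cheap to guess, and then to delete colors from the remaining neighbors of $x_1,x_2,x_3$ in a way that is harmless for the given coloring but destroys the $L_1$-type $(I_1,I_2,I_3)$ on those neighbors. The two facts that make this work are: (i) frugality of an $L$-coloring $\phi$ bounds, for each $i$, the number of neighbors of $x_i$ receiving a color of $I_i=L(x_i)$ by one per color, so there are at most $|I_i|-1$ of them once $\phi(x_i)$ is fixed; and (ii) since $\gamma$ is \emph{good}, the sets $I_1,I_2,I_3$ pairwise intersect, so a list avoiding $I_i$ entirely cannot equal any $I_j$.

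Concretely, I would form $\Upsilon_1(G,L)$ as follows. A branch consists of a choice of $c_i\in I_i$ for each $i\in\{1,2,3\}$, and, for each $i$ and each color $a\in I_i\setminus\{c_i\}$, a choice of a vertex $w_{i,a}\in N_G(x_i)$ or the symbol ``none''. For each branch I define $L_1$ by setting $L_1(x_i)=\{c_i\}$; $L_1(w_{i,a})=\{a\}$ for each chosen $w_{i,a}$; $L_1(v)=L(v)\setminus\bigcup_{i:\,v\in N_G(x_i)}I_i$ for every other vertex $v$ adjacent to some $x_i$; and $L_1(v)=L(v)$ otherwise. Each $L_1(v)\subseteq L(v)$, so $(G,L_1)$ is a spanning $(G,L)$-refinement. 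The number of branches is at most a constant (the choices of $(c_1,c_2,c_3)$) times $\prod_{i}(|V(G)|+1)^{|I_i|-1}$, and since $\sum_i(|I_i|-1)\le 3k-3$ this yields $|\Upsilon_1(G,L)|\le\mathcal{O}(|V(G)|^{3k-3})$; computing each $L_1$ costs $\mathcal{O}(|V(G)|)$, for the claimed total running time $\mathcal{O}(|V(G)|^{3k-2})$.

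For the anticomplete property, suppose $(G,L_1)\in\Upsilon_1(G,L)$ and $y_1-y_2-y_3$ is an induced $P_3$ with $L_1(y_j)=I_j$. Since $|I_j|\ge 2$ while each $x_i$ and each chosen $w_{i,a}$ has a singleton $L_1$-list, no $y_j$ equals any $x_i$ or any $w_{i,a}$. If some $y_j$ were adjacent to some $x_i$, then $y_j$ would be one of the ``other'' neighbors of $x_i$, whence $L_1(y_j)\cap I_i=\emptyset$; but $L_1(y_j)=I_j$ and $I_i\cap I_j\ne\emptyset$ by goodness, a contradiction. Hence $\{y_1,y_2,y_3\}$ is anticomplete to $\{x_1,x_2,x_3\}$. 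For the frugality property, given a frugal $L$-coloring $\phi$ I would take the branch with $c_i=\phi(x_i)$ and $w_{i,a}$ the unique neighbor of $x_i$ colored $a$ (well-defined by frugality at $x_i$, as $a\in L(x_i)$), or ``none'' if no such neighbor exists. Then $\phi(x_i)\in L_1(x_i)$ and $\phi(w_{i,a})\in L_1(w_{i,a})$ trivially; and for any other neighbor $v$ of $x_i$, properness rules out $\phi(v)=c_i$ and the frugality uniqueness rules out $\phi(v)\in I_i\setminus\{c_i\}$, so $\phi(v)\notin I_i$ and thus $\phi(v)\in L_1(v)$. Hence $\phi$ is an $L_1$-coloring, and by Lemma \ref{frugalspread} it is a frugal $L_1$-coloring.

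The only genuinely delicate point is arranging the guessing so that the exponent is exactly $3k-3$ rather than $3k$: this is where frugality is essential, since it is what lets us pin down the $I_i$-colored neighbors of each $x_i$ with only $|I_i|-1$ vertex-guesses (the color $c_i$ itself contributes no neighbor by properness). Everything else is bookkeeping; the conceptual heart is the pairing of this frugality bound with the goodness of $\gamma$, which guarantees that once $I_i$ has been removed from a neighbor's list, that list can no longer equal any of $I_1,I_2,I_3$.
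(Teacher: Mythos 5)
Your proposal is correct and takes essentially the same approach as the paper: the paper's proof guesses a set $S\subseteq N_G[\{x_1,x_2,x_3\}]$ of at most $3k-3$ extra vertices (the neighbors of each $x_i$ that receive a color of $I_i$, bounded via frugality) together with a coloring of $S$, fixes these as singleton lists, and strips $\bigcup_{j:\,v\in N_G(x_j)}I_j$ from every other neighbor's list, which is exactly what your color-indexed guesses $c_i$ and $w_{i,a}$ accomplish, with the same goodness-based anticompleteness argument and the same appeal to Lemma \ref{frugalspread}. The one piece of bookkeeping you should add is to discard (or intersect away) branches in which some chosen $w_{i,a}$ has $a\notin L(w_{i,a})$, or in which one vertex is assigned two conflicting singletons, so that each $(G,L_1)$ is genuinely a $(G,L)$-refinement; the intended branch arising from a frugal $L$-coloring never has such conflicts, so all three bullets go through unchanged.
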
  
\begin{proof}
 Let $\mathcal{S}$ be the set of all pairs $(S,\psi)$ where
\begin{itemize}
	\item[(T1)] $S$ is a subset of $N_G[\{x_1,x_2,x_3\}]$ containing $\{x_1,x_2,x_3\}$ with $|S|\leq 3k$, and
	\item[(T2)] $\psi$ is an $L|_S$-coloring of $G|S$, where for every $i\in \{1,2,3\}$ and every $j\in L(x_i)$, $x_i$ has at most one neighbor $v$ in $S$ with $\psi(v)=j$.
\end{itemize}
We deduce:

\sta{\label{goodlem1Srun}$|\mathcal{S}|\leq \mathcal{O}(|V(G)|^{3k-3})$ and $\mathcal{S}$ can be computed from $(G,L)$ in time $\mathcal{O}(|V(G)|^{3k-3})$.}

Let $\mathcal{T}$ be the set of all pairs $(S,\psi)$, consisting of a set $S$  satisfying (T1) and a coloring $\psi:S\rightarrow[k]$ of $G|S$. Note that for each $(S,\psi)\in \mathcal{T}$, there are at most $(3k-2)|V(G)|^{3k-3}$ choices for $S$, and
for each such choice, there are $k^{|S|}\leq k^{3k}$ possibilities for $\psi$. So we have $|\mathcal{T}|\leq (3k-2)k^{3k}|V(G)|^{3k-3}$, which along with $\mathcal{S}\subseteq \mathcal{T}$  proves the first assertion. To see the second, note that the elements of $\mathcal{T}$ can be enumerated in time $\mathcal{O}(|\mathcal{T}|)=\mathcal{O}(|V(G)|^{3k-3})$. Also, for every $(S,\psi)\in \mathcal{T}$, since $|S|\leq 3k$, it can be checked in constant time whether $\psi$ satisfies (T2), or equivalently $(S,\psi)\in \mathcal{S}$.  Hence, $\mathcal{S}$ can be computed from $(G,L)$ in time $\mathcal{O}(|\mathcal{T}|)=\mathcal{O}(|V(G)|^{3k-3})$. This proves \eqref{goodlem1Srun}.\vsp

For every $\sigma=(S,\psi)\in \mathcal{S}$, consider the $k$-list-assignment $L_{\sigma}$ of $G$, defined as follows. Let $v\in V(G)$.
\begin{itemize}
	\item[(M1)] If $v\in S$, then $L_{\sigma}(v)=\{\psi(v)\}$.
	\item [(M2)] If $v\in N_G[\{x_1,x_2,x_3\}]\setminus S$, then $L_{\sigma}(v)=L(v)\setminus (\bigcup_{ j\in \{1,2,3\}, v\in N_G(x_j)} I_j)$.
	\item [(M3)] If $v\notin N_G[\{x_1,x_2,x_3\}]$, then $L_{\sigma}(v)=L(v)$. 
\end{itemize}
Note that for every $\sigma\in \mathcal{S}$, $(G,L_{\sigma})$ is a spanning $(G,L)$-refinement. Consider the spanning $(G,L)$-profile $\Upsilon_1(G,L)=\{(G,L_{\sigma}):\sigma\in \mathcal{S}\}$.\\

\sta{\label{goodlem1run} $|\Upsilon_1(G,L)|\leq \mathcal{O}(|V(G)|^{3k-3})$ and $\Upsilon_1(G,L)$ can be computed  from $(G,L)$ in time $ \mathcal{O}(|V(G)|^{3k-2})$.}

The first assertion follows from \eqref{goodlem1Srun} and the fact that $|\Upsilon_1(G,L)|=|\mathcal{S}|$. For the second, we need to compute $\mathcal{S}$, which by \eqref{goodlem1Srun} is attainable in time $\mathcal{O}(|V(G)|^{3k-3})$. Then, for every $\sigma\in \mathcal{S}$, it is easily seen from (M1),(M2) and (M3) that $L_{\sigma}$ can be computed in time $\mathcal{O}(|V(G)|)$. Therefore,  $\Upsilon_1(G,L)$ can be computed in time $\mathcal{O}(|V(G)|^{3k-3}+|V(G)||\mathcal{S}|)=\mathcal{O}(|V(G)|^{3k-2})$, where the last equality follows from \eqref{goodlem1Srun}. This proves \eqref{goodlem1run}.

\sta{\label{goodlem1anti}For every $\sigma=(S,\psi)\in \Upsilon_1(G,L)$, every induced $P_3$ in $G$ of $L_{\sigma}$-type $\gamma$ is anticomplete to $x_1-x_2-x_3$.}

Let $Q=y_1-y_2-y_3$ be an induced $P_3$ in $G$ and $(L_{\sigma}(y_1),L_{\sigma}(y_2),L_{\sigma}(y_3))=\gamma$. Since $\gamma$ is good, we have $|L_{\sigma}(y_i)|=|I_i|\geq 2$ for all $i\in \{1,2,3\}$, while $|L_{\sigma}(v)|=1$ for all $v\in S$. So $S\cap \{y_1,y_2,y_3\}=\emptyset$. Also, if $y_i\in N_G(x_j)\setminus S$ for some $i,j\in \{1,2,3\}$, then by (M2), we have $I_i=L_{\sigma}(y_i)\subseteq  L(y_i)\setminus I_j$, and so $I_i\cap I_j=\emptyset$, which violates $\gamma$ being good. Hence, $N_G[\{x_1,x_2,x_3\}]\cap \{y_1,y_2,y_3\}=\emptyset$, and so $Q$ is anticomplete to $x_1-x_2-x_3$. This proves \eqref{goodlem1anti}.

\sta{\label{goodlem1frugtofrug} If $G$ admits a frugal $L$-coloring, then for some $\sigma\in \mathcal{S}$, $G$ admits a frugal $L_{\sigma}$-coloring.}

Let $\phi$ be a frugal $L$-coloring of $G$. For every $j\in \{1,2,3\}$, we define $A_j=\{v\in N_G(x_j): \phi(v)\in I_j\}$. Let $S=\{x_1,x_2,x_3\}\cup A_1\cup A_2\cup A_3$. Then we have $\{x_1,x_2,x_3\}\subseteq S\subseteq N_G[\{x_1,x_2,x_3\}]$, and since $\phi$ is a frugal $L$-coloring of $G$, we have $|A_j|\leq |I_j|-1\leq k-1$ for every $i\in \{1,2,3\}$, which in turn implies that $|S|\leq 3k$. Thus, $S$ satisfies (T1). We define $\psi=\phi|_S$. Then, again by the frugality of $\phi$, for every $i\in \{1,2,3\}$ and every $j\in L(x_i)$, $x_i$ has at most one neighbor $v$ in $S$ with $\psi(v)=j$. In other words, $\psi$ satisfies (S2), and so $\sigma=(S,\psi)\in \mathcal{S}$. Let $L_{\sigma}$ be the corresponding $k$-list-assignment defined in (M1), (M2) and (M3). We claim that $\phi$ is a frugal $L_{\sigma}$-coloring of $G$. Let $v\in V(G)$. If $v\in S$, then by (M1), we have $\phi(v)=\psi(v)\in \{\psi(v)\}=L_{\sigma}(v)$. Also, if $v\in N_G(\{x_1,x_2,x_3\})\setminus S=N_G(\{x_1,x_2,x_3\})\setminus (A_1\cup A_2\cup A_3)$, then for every $j\in \{1,2,3\}$ with $v\in N_G(x_j)$, we have $\phi(v)\notin I_j$, and so by (M2), we have $\phi(v)\in L(v)\setminus (\bigcup_{ j\in \{1,2,3\}, v\in N_G(x_j)} I_j)=L_{\sigma}(v)$. Finally, if $v\notin N_G(\{x_1,x_2,x_3\})$, then by (M3), we have $\phi(v)\in L(v)=L_{\sigma}(v)$. In summary, we have $\phi(v)\in L_{\sigma}(v)$ for every $v\in V(G)$. Therefore, by Lemma \ref{frugalspread}, $\phi$ is a frugal $L_{\sigma}$-coloring of $G$. This proves \eqref{goodlem1frugtofrug}.\vsp

Finally, from \eqref{goodlem1run}, \eqref{goodlem1anti} and \eqref{goodlem1frugtofrug}, we conclude that $\Upsilon_1(G,L)$ satisfies Lemma \ref{goodp3lem1}. This completes the proof.
\end{proof}

Let $k\in \mathbb{N}$ and $\gamma= (I_1,I_2,I_3)$ be a triple of  subsets of $[k]$. Also, let $(G,L)$ be an instance of the \textsc{List-$k$-Coloring Problem}. We denote by $f_{L,\gamma}(G)$ be the maximum number of mutually anticomplete induced $P_3$'s in $G$ of $L$-type $\gamma$. 

\begin{lemma}\label{goodp3lem2}
	Let $k\in \mathbb{N}$ be fixed and $\gamma= (I_1,I_2,I_3)$ be a good triple of subsets of $[k]$. Let $(G,L)$ be an instance of the \textsc{List-$k$-Coloring Problem}. Also, suppose that no $L$-good $P_3$ in $G$ is of $L$-weight strictly larger than $|I_1|+|I_2|+|I_3|$. Then there exists a spanning $(G,L)$-profile $\Upsilon_2(G,L)$ with the following specifications.
	\begin{itemize}
		\item \sloppy $|\Upsilon_2(G,L)|\leq \mathcal{O}(|V(G)|^{(3k-3)f_{L,\gamma}(G)})$ and $\Upsilon_2(G,L)$ can be computed from $(G,L)$ in time $\mathcal{O}(|V(G)|^{(3k-2)f_{L,\gamma}(G)})$.
		\item For every $(G,L_2)\in \Upsilon_2(G,L)$, $G$ has no induced $P_3$ of $L_2$-type $\gamma$ (and, of course, none of $L_2$-weight strictly larger than $|I_1|+|I_2|+|I_3|$).
	\item If $G$ admits a frugal $L$-coloring, then for some $(G,L_2)\in \Upsilon_2(G,L)$, $G$ admits a frugal $L_2$-coloring. 
	\end{itemize}
\end{lemma}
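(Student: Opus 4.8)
\emph{The plan is to} prove this by induction on $f_{L,\gamma}(G)$, peeling off one induced $P_3$ of $L$-type $\gamma$ at a time via Lemma \ref{goodp3lem1}. In the base case $f_{L,\gamma}(G)=0$, the graph $G$ has no induced $P_3$ of $L$-type $\gamma$ at all, so I would simply output the singleton profile $\Upsilon_2(G,L)=\{(G,L)\}$; all three conclusions hold trivially (the exponent is $0$, so the size and running-time bounds read as $\mathcal{O}(1)$, and a frugal $L$-coloring is preserved by taking $L_2=L$). Whether we are in this base case can be detected in time $\mathcal{O}(|V(G)|^3)$ by scanning all triples of vertices.

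For the inductive step, suppose $f_{L,\gamma}(G)=m\geq 1$, and fix any induced $P_3$, say $x_1-x_2-x_3$, of $L$-type $\gamma$ (one exists since $m\geq 1$). I would apply Lemma \ref{goodp3lem1} to obtain the profile $\Upsilon_1(G,L)$, and set
\[ \Upsilon_2(G,L)=\bigcup_{(G,L_1)\in \Upsilon_1(G,L)} \Upsilon_2(G,L_1), \]
where each $\Upsilon_2(G,L_1)$ is the profile furnished by the inductive hypothesis applied to $(G,L_1)$. For this to make sense I must first check that $(G,L_1)$ itself satisfies the hypothesis of the lemma: since $L_1(v)\subseteq L(v)$ for all $v$, every $L_1$-good $P_3$ is also $L$-good and has $L_1$-weight at most its $L$-weight, so the bound on the weight of good $P_3$'s is inherited. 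Transitivity of refinement then guarantees that each element of $\Upsilon_2(G,L_1)$ is a spanning $(G,L)$-refinement, and the second and third conclusions (no $L_2$-good $P_3$ of type $\gamma$, and preservation of frugal colorings) propagate immediately; the frugality transfer simply chains the last bullet of Lemma \ref{goodp3lem1} with the inductive hypothesis.

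The crux of the argument, and the step I expect to be the main obstacle, is verifying that $f_{L_1,\gamma}(G)<m$ for every $(G,L_1)\in \Upsilon_1(G,L)$, so that the induction actually terminates and the exponents come out right. This is exactly where the weight hypothesis is used. Take a maximum family $\mathcal{Q}$ of mutually anticomplete induced $P_3$'s of $L_1$-type $\gamma$, so $|\mathcal{Q}|=f_{L_1,\gamma}(G)$. For each $Q=y_1-y_2-y_3\in\mathcal{Q}$ we have $L_1(y_i)=I_i\subseteq L(y_i)$, so $Q$ is $L$-good with $L$-weight at least $|I_1|+|I_2|+|I_3|$; the hypothesis forces equality, hence $L(y_i)=I_i$ and $Q$ is in fact of $L$-type $\gamma$. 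Moreover $x_1-x_2-x_3$ receives singleton lists under $L_1$ in the construction of $\Upsilon_1(G,L)$, so it is \emph{not} of $L_1$-type $\gamma$ and hence $x_1-x_2-x_3\notin\mathcal{Q}$, while Lemma \ref{goodp3lem1} guarantees that each $Q\in\mathcal{Q}$ is anticomplete to $x_1-x_2-x_3$. Therefore $\mathcal{Q}\cup\{x_1-x_2-x_3\}$ is a family of $f_{L_1,\gamma}(G)+1$ mutually anticomplete induced $P_3$'s, all of $L$-type $\gamma$, which yields $f_{L,\gamma}(G)\geq f_{L_1,\gamma}(G)+1$, i.e.\ $f_{L_1,\gamma}(G)\leq m-1$, as desired.

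Finally, the complexity bounds follow by composing those of Lemma \ref{goodp3lem1} across the recursion. Since $f_{L_1,\gamma}(G)\leq m-1$, the inductive hypothesis gives $|\Upsilon_2(G,L_1)|\leq \mathcal{O}(|V(G)|^{(3k-3)(m-1)})$ with running time $\mathcal{O}(|V(G)|^{(3k-2)(m-1)})$; multiplying by $|\Upsilon_1(G,L)|\leq \mathcal{O}(|V(G)|^{3k-3})$ and adding the cost $\mathcal{O}(|V(G)|^{3k-2})$ of computing $\Upsilon_1(G,L)$ yields $|\Upsilon_2(G,L)|\leq \mathcal{O}(|V(G)|^{(3k-3)m})$ and total running time $\mathcal{O}(|V(G)|^{(3k-2)m})$, matching the claim with $m=f_{L,\gamma}(G)$. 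The recursion depth is $f_{L,\gamma}(G)$, so the only bookkeeping point is that the constants hidden by the $\mathcal{O}$'s accumulate once per level; this is harmless whenever $f_{L,\gamma}(G)$ is treated as a fixed parameter, as it will be in the $rP_3$-free application, where any $t$ mutually anticomplete induced $P_3$'s form an induced $tP_3$ and hence $f_{L,\gamma}(G)<r$.
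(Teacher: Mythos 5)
Your proposal is correct and takes essentially the same route as the paper: induction on $f_{L,\gamma}(G)$, applying Lemma \ref{goodp3lem1} to one induced $P_3$ of $L$-type $\gamma$, using the weight hypothesis to force every $P_3$ of $L_1$-type $\gamma$ to have $L$-type exactly $\gamma$ (hence $f_{L_1,\gamma}(G)<f_{L,\gamma}(G)$), and composing the size and running-time bounds in the identical way. One cosmetic remark: to conclude $x_1-x_2-x_3\notin\mathcal{Q}$ you appeal to the singleton lists assigned to $x_1,x_2,x_3$ inside the construction of $\Upsilon_1(G,L)$, which is not part of the statement of Lemma \ref{goodp3lem1}; this is harmless, since the statement's ``anticomplete to (and thus disjoint from) $x_1-x_2-x_3$'' already gives the same conclusion.
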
  
\begin{proof}
For fixed $\gamma$, we proceed by induction on $f_{L,\gamma}(G)$. If $f_{L,\gamma}(G)=0$, then $\Upsilon_2(G,L)=\{(G,L)\}$ satisfies Lemma \ref{goodp3lem2}. So we may assume that $k\geq 2$, and we may choose $x_1-x_2-x_3$ as an induced $P_3$ in $G$ with $(L(x_1),L(x_2),L(x_3))=\gamma$. Consequently, we may apply Lemma \ref{goodp3lem1} to $(G,L)$, $\gamma$ and $x_1-x_2-x_3$, obtaining a spanning $(G,L)$-profile  $\Upsilon_1(G,L)$ satisfying Lemma \ref{goodp3lem1}.

\sta{\label{fsmaller} For every $(G,L_1)\in \Upsilon_1(G,L)$, we have $f_{L_1,\gamma}(G)<f_{L,\gamma}(G)$.}

 Suppose for a contradiction that $f_{L_1,\gamma}(G)\geq f_{L,\gamma}(G)=t\geq 1$. We may consider a collection $\{q^i_1-q^i_2-q^i_3:i\in [t]\}$ of $t$ mutually anticomplete induced $P_3$'s in $G$, such that for every $i\in [t]$, $(L_1(q^i_1),L_1(q^i_2),L_1(q^i_3))=\gamma$. Now, for each $i\in [t]$, by the second bullet of Lemma \ref{goodp3lem1}, $q^i_1-q^i_2-q^i_3$ is anticomplete to $x_1-x_2-x_3$. Also, since $ (G,L_1) $ is a $(G,L)$-refinement,  for every $j\in \{1,2,3\}$, we have $I_j=L_1(q^i_j)\subseteq L(q^i_j)$. Therefore, $\gamma$ being good, $q^i_1-q^i_2-q^i_3$ is an $L$-good $P_3$ in $G$ of $L$-weight at least $|I_1|+|I_2|+|I_3|$. This, along with the assumption of Lemma \ref{goodp3lem2} that no $L$-good $P_3$ in $G$ is of $L$-weight strictly larger than $|I_1|+|I_2|+|I_3|$, implies that $L(q^i_j)=I_j$ for every $j\in \{1,2,3\}$. In other words, $q^i_1-q^i_2-q^i_3$ is an induced $P_3$ in $G$ of $L$-type $\gamma$ which is anticomplete to $x_1-x_2-x_3$. Hence, $\{q^i_1-q^i_2-q^i_3:i\in [t]\}\cup \{x_1-x_2-x_3\}$ comprises $t+1$ mutually anticomplete $P_3$'s in $G$ of $L$-type $\gamma$, which is impossible. This proves \eqref{fsmaller}.

\sta{\label{goodlem2IH}Let $(G,L_1)\in\Upsilon_1(G,L)$. Then there exists a spanning $(G,L_1)$-profile $\Upsilon_2(G,L_1)$ with the following specifications.
\begin{itemize}
	\item \sloppy $|\Upsilon_2(G,L_1)|\leq \mathcal{O}(|V(G)|^{(3k-3)(f_{L,\gamma}(G)-1)})$ and $\Upsilon_2(G,L_1)$ can be computed from $(G,L_1)$ in time $\mathcal{O}(|V(G)|^{(3k-2)(f_{L,\gamma}(G)-1)})$.
	\item For every $(G,L_2)\in \Upsilon_2(G,L_1)$, $G$ has no induced $P_3$ of $L_2$-type $\gamma$ (and, of course, none of $L_2$-weight strictly larger than $|I_1|+|I_2|+|I_3|$).
	\item If $G$ admits a frugal $L_1$-coloring, then for some $(G,L_2)\in \Upsilon_2(G,L)$, $G$ admits a frugal $L_2$-coloring. 
\end{itemize}}

By the assumption of Lemma \ref{goodp3lem2}, $G$ has no $L$-good $P_3$ of $L_1$-weight strictly larger than $|I_1|+|I_2|+|I_3|$. So since $(G,L_1)$ is a $(G,L)$-refinement,  $G$ has no $L_1$-good $P_3$ of $L_1$-weight strictly larger than $|I_1|+|I_2|+|I_3|$. This, along with \eqref{fsmaller} and the induction hypothesis, proves \eqref{goodlem2IH}.\vsp

\sloppy Finally, we define  $\Upsilon_2(G,L)=\bigcup_{(G,L_1)\in \Upsilon_1(G,L)}\Upsilon_2(G,L_1)$, where for every $(G,L_1)\in \Upsilon_1(G,L)$, $\Upsilon_2(G,L_1)$ is as promised in \eqref{goodlem2IH}. By the first bullet of \eqref{goodlem2IH} and the first bullet of Lemma \ref{goodp3lem1}, we have $|\Upsilon_2(G,L)|\leq \mathcal{O}\left(|V(G)|^{(3k-3)f_{L,\gamma}(G)}\right)$ and $\Upsilon_2(G,L)$ can be computed from $(G,L)$ in time $\mathcal{O}\left(|V(G)|^{(3k-2)}\right)+\mathcal{O}\left(|V(G)|^{(3k-3)}\right)\mathcal{O}\left(|V(G)|^{(3k-2)(f_{L,\gamma}(G)-1)}\right)=\mathcal{O}\left(|V(G)|^{(3k-2)f_{L,\gamma}(G)}\right)$. So $\Upsilon_2(G,L)$ satisfies the first bullet of Lemma \ref{goodp3lem2}. Also, the second bullet of Lemma \ref{goodp3lem2} for $\Upsilon_2(G,L)$ follows from the second bullet of \eqref{goodlem2IH}, and the third bullet of Lemma \ref{goodp3lem2} for $\Upsilon_2(G,L)$ follows from the third bullet of \eqref{goodlem2IH} together with the third bullet of Lemma \ref{goodp3lem1}. Hence, $\Upsilon_2(G,L)$ satisfies Lemma \ref{goodp3lem2}. This completes the proof.
\end{proof}
Here is the main theorem of this section.
\begin{theorem}\label{goodp3theorem}
For all $k,r\in \mathbb{N}$, there exists $\upsilon(k,r)\in \mathbb{N}$ with the following property. Let $(G,L)$ be an instance of the \textsc{List-$k$-Coloring Problem} where $G$ is $rP_3$-free. Then there exists a spanning $(G,L)$-profile $\Upsilon(G,L)$ with the following specifications.
	\begin{itemize}
		\item \sloppy $|\Upsilon(G,L)|\leq \mathcal{O}(|V(G)|^{\upsilon(k,r)})$, and $\Upsilon(G,L)$ can be computed from $(G,L)$ in time $ \mathcal{O}(|V(G)|^{\upsilon(k,r)})$.
		\item For every $(G,L')\in \Upsilon(G,L)$, $G$ has no $L'$-good $P_3$.
	\item If $G$ admits a frugal $L$-coloring, then for some $(G,L')\in \Upsilon(G,L)$, $G$ admits a frugal $L'$-coloring. 
	\end{itemize}
\end{theorem}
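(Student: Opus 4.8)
The plan is to strip away good $P_3$'s of one $L$-type at a time by repeated use of Lemma \ref{goodp3lem2}, processing the types in non-increasing order of $L$-weight so that the weight hypothesis of that lemma is available at every step. Since a good triple $\gamma=(I_1,I_2,I_3)$ is just a triple of subsets of $[k]$, there are at most $2^{3k}$ of them, a number $m=m(k)$ depending only on $k$; I would list them as $\gamma_1,\ldots,\gamma_m$ with $|\gamma_1|\geq\cdots\geq|\gamma_m|$, where $|\gamma_s|$ denotes the weight $|I_1|+|I_2|+|I_3|$. The output profile is then built by iteration: set $\Pi_0=\{(G,L)\}$ and, for $s=1,\ldots,m$, let $\Pi_s=\bigcup_{(G,L')\in \Pi_{s-1}}\Upsilon_2(G,L')$, where $\Upsilon_2(G,L')$ is the profile supplied by Lemma \ref{goodp3lem2} applied to $(G,L')$ with the good triple $\gamma_s$. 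The final profile is $\Upsilon(G,L)=\Pi_m$.

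Correctness rests on the invariant, proved by induction on $s$, that no instance of $\Pi_s$ contains a good $P_3$ of type $\gamma_j$ for any $j\in[s]$. Two points make the induction go through. First, at step $s$ the hypothesis of Lemma \ref{goodp3lem2} holds for each $(G,L')\in\Pi_{s-1}$: a good $P_3$ of weight strictly larger than $|\gamma_s|$ would have a type $\gamma_j$ with $|\gamma_j|>|\gamma_s|$, hence $j<s$, and the inductive hypothesis has already excluded such $P_3$'s. Second, the refinements produced at step $s$ do not resurrect an already-eliminated type: if a refinement $L''$ of $L'$ created a type-$\gamma_j$ $P_3$ (for some $j\leq s$) that was absent under $L'$, then under $L'$ that same $P_3$ would be a good $P_3$—its lists contain the good triple $\gamma_j$, so by Lemma \ref{goodspread}-style monotonicity it is $L'$-good—of weight strictly larger than $|\gamma_j|\geq|\gamma_s|$, contradicting the hypothesis used at step $s$. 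Once the invariant reaches $s=m$, every good $P_3$, its type being some $\gamma_j$ with $j\in[m]$, has been eliminated, which is exactly the second bullet. This no-resurrection argument is the step I expect to be the real crux: it is what licenses each successive application of Lemma \ref{goodp3lem2}, and it is the reason the weight ordering, rather than an arbitrary ordering of types, is essential.

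For the quantitative bounds I would exploit $rP_3$-freeness decisively. Any family of mutually anticomplete induced $P_3$'s is an induced disjoint union of $P_3$'s, so it has fewer than $r$ members; hence $f_{L',\gamma_s}(G)\leq r-1$ for every instance $(G,L')$ and every type $\gamma_s$. By Lemma \ref{goodp3lem2} each step therefore multiplies the number of instances by at most $\mathcal{O}(|V(G)|^{(3k-3)(r-1)})$ and spends at most $\mathcal{O}(|V(G)|^{(3k-2)(r-1)})$ per instance. As there are only $m\leq 2^{3k}$ steps, $|\Upsilon(G,L)|$ and the total running time are $\mathcal{O}(|V(G)|^{\upsilon(k,r)})$ for a suitable constant, e.g.\ $\upsilon(k,r)=(m+1)(3k-2)(r-1)+1$, giving the first bullet. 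The third bullet follows by chaining the third bullet of Lemma \ref{goodp3lem2} along the iteration: if some instance of $\Pi_{s-1}$ admits a frugal coloring, then so does some instance of $\Pi_s$, so a frugal $L$-coloring of $G$ propagates to a frugal $L'$-coloring for some $(G,L')\in\Pi_m$.
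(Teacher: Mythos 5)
Your proposal is correct and follows essentially the same route as the paper: the same non-increasing weight ordering of the (at most $2^{3k}$) good triples, the same iterated application of Lemma \ref{goodp3lem2} with the invariant that all previously processed types remain excluded, the same use of $rP_3$-freeness to bound $f_{L',\gamma_s}(G)\leq r-1$, and the same chaining of frugal-coloring propagation. Your explicit ``no-resurrection'' argument is exactly the point the paper handles (somewhat more tersely) when it asserts that each refinement in $\Lambda_i$ still excludes the types $\gamma_j$, $j\in[i-1]$, so there is no substantive difference.
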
 
\begin{proof}
	If $k=1$, then by setting $\upsilon(1,r)=1$ and $\Upsilon(G,L)=\{(G,L)\}$, we are done. So we may assume that $k\geq 2$. Let $(\gamma_i=(I_1^i, I_2^i,I_3^i):i\in [m])$ be an enumeration of all good triples of subsets of $[k]$, such that for $i,j\in [m]$, $i>j$ implies $|I^i_1|+|I^i_2|+|I^i_3|\leq |I^j_1|+|I^j_2|+|I^j_3|$. Note that $m\leq 2^{3k}$, and so this enumeration can be computed in constant time.
	
	\sta{\label{f<r}For every list assignment $K$ of $G$ and every $i\in [m]$, we have $f_{K,\gamma_i}(G)\leq r-1$.}
For otherwise $G$ contains $r$ mutually anticomplete induced $P_3$'s, which violates the assumption of Theorem \ref{goodp3theorem} that $G$ is $rP_3$-free. This proves \eqref{f<r}.

\sta{\label{goodthmseq}There exists a sequence $(\Lambda_0,\ldots, \Lambda_m)$ of spanning $(G,L)$-profiles, where $\Lambda_0=\{(G,L)\}$, and for each $i\in [m]$, the following hold.
		\begin{itemize}
		\item $|\Lambda_i|\leq \mathcal{O}(|V(G)|^{i(r-1)(3k-3)})$ and  $\Lambda_i$ can be computed from $\Lambda_{i-1}$ in time $ \mathcal{O}(|V(G)|^{i(r-1)(3k-2)})$.
		\item For every $(G,L')\in \Lambda_i$, $G$ has no induced $P_3$ of $L'$-type in $\{\gamma_j:j\in [i]\}$.
	\item If $G$ admits a frugal $L''$-coloring for some $(G,L'')\in \Lambda_{i-1}$, then for some $(G,L')\in \Lambda_{i}$, $G$ admits a frugal $L'$-coloring. 
\end{itemize}}

\sloppy We generate this sequence recursively. To initiate, note that $G$ has no $L$-good $P_3$ of $L$-weight larger that $|I^1_1|+|I^1_2|+|I^1_3|$. Thus, we may apply Lemma \ref{goodp3lem2} to $(G,L)$ and $\gamma_1$, obtaining a spanning $(G,L)$-profile $\Upsilon_2(G,L)$ which satisfies Lemma \ref{goodp3lem2}. As result, defining $\Lambda_1=\Upsilon_2(G,L)$, then by \eqref{f<r}, $\Lambda_1$ satisfies the bullet conditions of \eqref{goodthmseq} for $i=1$. Next, assume that for some $i\in \{2,\ldots, m\}$, the $(G,L)$-profile $\Lambda_{i-1}$, satifying  the bullet conditions of \eqref{goodthmseq}, is computed. In particular, for every $(G,L'')\in \Lambda_{i-1}$, $G$ has no induced $P_3$ of $L''$-type in $\{\gamma_j:j\in [i-1]\}$. As a result, $G$ has no $L''$-good $P_3$ of $L''$-weight larger that $|I^i_1|+|I^i_2|+|I^i_3|$. Thus, we may apply Lemma \ref{goodp3lem2} to $(G,L'')$ and $\gamma_i$, obtaining a spanning $(G,L)$-profile $\Upsilon_2(G,L'')$ which satisfies Lemma \ref{goodp3lem2}. Let $\Lambda_i=\bigcup_{(G,L'')\in \Lambda_{i-1}}\Upsilon_2(G,L'')$. We claim that $\Lambda_i$ satisfies the bullet conditions of \eqref{goodthmseq}. To see this, from \eqref{f<r} and the first bullet of Lemma \ref{goodp3lem2}, we deduce that for every $(G,L'')\in \Lambda_{i-1}$,  $|\Upsilon_2(G,L'')|\leq \mathcal{O}(|V(G)|^{f_{L'',\gamma_i}(G)(3k-3)})=\mathcal{O}(|V(G)|^{(r-1)(3k-3)})$ and $\Upsilon_2(G,L'')$ can be computed from $(G,L'')$ in time $\mathcal{O}(|V(G)|^{f_{L'',\gamma_i}(G)(3k-2)})=\mathcal{O}(|V(G)|^{(r-1)(3k-2)})$. This, along with the fact that $\Lambda_{i-1}$ satisfies the first bullet of \eqref{goodthmseq}, implies that $|\Lambda_i|\leq \mathcal{O}(|V(G)|^{(i-1)(r-1)(3k-3)})\mathcal{O}(|V(G)|^{(r-1)(3k-3)})=\mathcal{O}(|V(G)|^{i(r-1)(3k-3)})$, and $\Lambda_i$ can be computed from $\Lambda_{i-1}$ in time $\mathcal{O}(|V(G)|^{(i-1)(r-1)(3k-3)})\mathcal{O}(|V(G)|^{(r-1)(3k-2)})=\mathcal{O}(|V(G)|^{i(r-1)(3k-2)})$. Therefore, $\Lambda_i$ satisfies the first bullet of \eqref{goodthmseq}.
Moreover, for every $(G,L')\in \Lambda_i$, say $(G,L')\in \Upsilon_2(G,L'')$ for some $(G,L'')\in \Lambda_{i-1}$, by second bullet of Lemma \ref{goodp3lem2},  $G$ has no induced $P_3$ of $L'$-type $\gamma_i$. Also, since $(G,L')$ is a $(G,L'')$-refinement, by the second bullet \eqref{goodthmseq} for $\Lambda_{i-1}$, $G$ has no induced $P_3$ of $L'$-type in $\{(I_1^j, I_2^j,I_3^j):j\in [i-1]\}$. It follows that $G$ has no induced $P_3$ of $L'$-type in $\{(I_1^j, I_2^j,I_3^j):j\in [i]\}$. So $\Lambda_i$ satisfies the second bullet of \eqref{goodthmseq}. Finally, the third bullet of Lemma \ref{goodp3lem2} implies that, if $G$ admits a frugal $L''$-coloring for some $(G,L'')\in \Lambda_{i-1}$, then for some $(G,L')\in \Lambda _i$, $G$ admits a frugal $L'$-coloring. So $\Lambda_i$ satisfies the third bullet of  \eqref{goodthmseq}. This proves \eqref{goodthmseq}.\vsp

Now, let $(\Lambda_1,\ldots, \Lambda_m)$ be as in \eqref{goodthmseq}. Let $\Upsilon(G,L)=\Lambda_m$. Then, since $m\leq 2^{3k}$, by the first bullet of \eqref{goodthmseq} for $i=m$, we have $|\Upsilon(G,L)|\leq \mathcal{O}(|V(G)|^{(r-1)(3k-3)2^{3k}})$, and by the first bullet of \eqref{goodthmseq} for $i=0,1,\ldots, m$, $\Upsilon(G,L)$ can be computed  in time $\mathcal{O}(|V(G)|^{(r-1)(3k-2)2^{3k}})$. So by setting $\upsilon(k,r)=(r-1)(3k-2)2^{3k}$, $\Upsilon(G,L)$ satisfies the first bullet of Theorem \ref{goodp3theorem}. Also, by the second bullet of \eqref{goodthmseq} for $i=m$, for every $(G,L')\in \Upsilon(G,L)$, $G$ has no induced $P_3$ of $L'$-type in $\{\gamma_i:i\in [m]\}$, and so $G$ has no $L'$-good $P_3$. Therefore, $\Upsilon(G,L)$ satisfies the second bullet of Theorem \ref{goodp3theorem}. Finally, applying the third bullet of \eqref{goodthmseq} to $i=0,1,\ldots,m$ consecutively, it follows that if $G$ admits a frugal $L$-coloring, then for some $(G,L')\in \Upsilon(G,L)$, $G$ admits a frugal $L'$-coloring. Hence, $\Upsilon(G,L)$ satisfies the third bullet of Theorem \ref{goodp3theorem}. This completes the proof.
\end{proof}

\section{Five colors and vertices with large lists}\label{23sec}
		
In this section, we take the last major step towards the proof of  Theorem \ref{thm:main}: we show that essentially every  instance of the \textsc{List-$5$-Coloring Problem} which has at least one vertex of list-size three or more and no good $P_3$'s can be reduced in polynomial time to a ``smaller'' instance. We prove this formally in Theorem \ref{23}, whose proof relies crucially on two lemmas, and in order to state them, we need another definition. Let $k\in \mathbb{N}$ and $(G,L)$ be an instance of the \textsc{List-$k$-Coloring Problem}.  We denote by $G^L$ the graph with $V(G^L)=V(G)$ and $E(G^L)=\{uv\in E(G):L(u)\cap L(v)\neq \emptyset\}$.  Note that $(G,L)$ and $(G^L,L)$ have the same state of feasibility. But $G^L$ is not necessarily an induced subgraph of $G$, and so for our purposes, it seems dangerous to consider $(G^L,L)$ as a `simplified' instance to investigate. However, it turns out that we may still take advantage of certain properties of $G^L$. For example, the following lemma proposes a useful interaction between frugality and good $P_3$'s in terms of vertex degrees in $G^L$.
\begin{lemma}\label{smallgooddeg}
	Let $(G,L)$ be an instance of the \textsc{List-$k$-Coloring Problem} such that $|L(v)|\neq 1$ for every $v\in V(G)$, and $G$ has no $L$-good $P_3$. If $G$ admits a frugal $L$-coloring  $\phi$, then for every vertex $v\in V(G)$, $\phi$ assigns mutually distinct colors to all vertices in $N_{G^L}[v]$, and in particular, we have $|N_{G^L}(v)|<k$.
\end{lemma}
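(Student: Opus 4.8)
The plan is to fix an arbitrary vertex $v \in V(G)$ and establish the distinctness assertion directly; the bound $|N_{G^L}(v)| < k$ is then immediate, since a set of vertices on which $\phi$ is injective has at most $k$ elements (colours live in $[k]$), so $|N_{G^L}[v]| \le k$ and hence $|N_{G^L}(v)| \le k-1$. First I would record that $E(G^L) \subseteq E(G)$ by definition of $G^L$, so $\phi$, being a proper $L$-coloring of $G$, is in particular proper on the spanning subgraph $G^L$; this already settles every pair $\{v,u\}$ with $u \in N_{G^L}(v)$, because $uv \in E(G^L) \subseteq E(G)$ forces $\phi(v) \ne \phi(u)$. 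It therefore suffices to rule out two \emph{distinct} neighbours $u,w \in N_{G^L}(v)$ with $\phi(u) = \phi(w)$.

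For the main step I would argue by contradiction. Suppose $u,w \in N_{G^L}(v)$ are distinct and $\phi(u) = \phi(w) = i$ for some $i \in [k]$. Since $\phi$ is proper on $G$ and assigns $i$ to both $u$ and $w$, we must have $uw \notin E(G)$; together with $uv, wv \in E(G^L) \subseteq E(G)$ this shows that $u-v-w$ is an induced $P_3$ of $G$. It then remains to verify that its $L$-type $(L(u),L(v),L(w))$ is good, which contradicts the hypothesis that $G$ has no $L$-good $P_3$. The three defining conditions fall out of the hypotheses: each of $L(u),L(v),L(w)$ is nonempty (all three vertices are coloured by $\phi$) and has size $\ne 1$, hence size at least $2$; $L(u)\cap L(v) \ne \emptyset$ and $L(v)\cap L(w)\ne \emptyset$ precisely because $uv$ and $wv$ are edges of $G^L$; and $L(u)\cap L(w) \ni i$ since $\phi$ is an $L$-coloring, so $i = \phi(u) \in L(u)$ and $i = \phi(w) \in L(w)$. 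This closes the contradiction, so no such $u,w$ exist and all of $N_{G^L}[v]$ is coloured injectively.

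The step I expect to be most delicate is not any single deduction but making sure the three list-intersection conditions are genuinely forced, and in particular noticing that the non-edge $uw$ — the only thing that makes $u-v-w$ \emph{induced} — is supplied for free by properness of $\phi$ once $\phi(u) = \phi(w)$, rather than requiring a separate assumption. It is worth remarking that frugality can be brought in to streamline the subcase $i \in L(v)$: there $u$ and $w$ would be two neighbours of $v$ both coloured $i \in L(v)$, contradicting frugality of $\phi$ at $v$ directly, so the good-$P_3$ argument would only need to treat $i \notin L(v)$ (although, as written, it in fact covers every case regardless of whether $i \in L(v)$). Either route yields the contradiction, and the counting remark in the first paragraph then delivers both conclusions of the lemma.
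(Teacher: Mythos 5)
Your proof is correct and follows essentially the same route as the paper's: reduce to two distinct, nonadjacent vertices $u,w\in N_{G^L}(v)$ with $\phi(u)=\phi(w)$, observe that $u-v-w$ is then an induced $P_3$ whose $L$-type is good (nonempty pairwise intersections from the $G^L$-edges and the shared color, sizes $\geq 2$ from the hypothesis $|L(\cdot)|\neq 1$), and contradict the absence of $L$-good $P_3$'s. Your side remark is also accurate: like the paper's proof, the argument only uses properness and membership in the lists, so frugality is never actually needed.
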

\begin{proof}
	Suppose not. Then since $\phi$ is proper, there exist two vertices $u,w\in N_{G^L}(v)$ such that $u$ and $w$ are nonadjacent  in $G$ and $\phi(u)=\phi(w)\in L(u)\cap L(w)$. Also, since  $u,w\in N_{G^L}(v)$, both $L(u)\cap L(v)$ and $L(v)\cap L(w)$ are nonempty.  But then $u-v-w$ is an $L$-good $P_3$ in $G$, a contradiction. This completes the proof.
\end{proof}

 The following technical lemma also unravels the structural properties of the second neighborhood of certain vertices in $G^L$. We use this lemma  extensively while proving  Theorem \ref{23}.
		
\begin{lemma} \label{ABhomo}
Let  $(G,L)$ be an instance of the \textsc{List-$5$-Coloring Problem} such that $|L(u)|\in \{0,2,3\}$ for all $u\in V(G)$ and $G$ has no $L$-good $P_3$. Moreover, suppose that there exists a vertex $u_0\in V(G)$ with $L(u_0)=\{1,2,3\}$. In addition, let $A=N_{G^L}(u_0)\cap L^{(4)}$,  $B=N_{G^L}(u_0)\cap L^{(5)}$, $A'=\{w\in N^2_{G^L}(u_0):  N_{G^L}(w)\cap A\neq\emptyset\}$, $B'=\{w\in N^2_{G^L}(u_0): N_{G^L}(w)\cap B\neq\emptyset\}$. Then the following hold.
	\begin{itemize}
		\item $|L(u)|\in \{2,3\}$ for every $u\in N^2_{G^L}[u_0]$.
		\item $L(w)=\{4,5\}$ for every $w\in N^2_{G^L}(u_0)$.
		\item $N^2_{G^L}(u_0)=A'\cup B'$.
		\item Both $A$ and $B$ are cliques of $G^L$.
		\item For every vertex $w\in N^2_{G^L}(u_0)$, $w$ is either complete or anticomplete to $A$ in $G^L$, and either complete or anticomplete to $B$ in $G^L$. In particular, $A'$ is complete to $A$ in $G^L$, and $B'$ is complete to $B$ in $G^L$.
		\item Both $A'$ and $B'$ are cliques of $G^L$.
		\item If in addition, $|N_{G^L}^2(u_0)|\geq 2$ and $|A'|,|B'|\leq 1$, then 
		\begin{itemize}
\item[-] $|A'|=|B'|=1$ and $A'\cap B'=\emptyset$;
\item[-] $A,B\neq \emptyset$ and $A\cap B=\emptyset$; 
\item[-] $A'$ is anticomplete to $B$ in $G$ and $B'$ is anticomplete to $A$ in $G$, and;
\item[-] for every $a\in A$ and every $b\in B$, we have $L(a)\cap L(b)=\emptyset$.
		\end{itemize} 
	\end{itemize}	
	\end{lemma}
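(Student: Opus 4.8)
The plan is to prove the seven bullets in the stated order, each time reaching a contradiction by exhibiting a forbidden $L$-good $P_3$; I read the definitions of $A'$ and $B'$ as $A'=\{w\in N^2_{G^L}(u_0):N_{G^L}(w)\cap A\neq\emptyset\}$ and $B'=\{w\in N^2_{G^L}(u_0):N_{G^L}(w)\cap B\neq\emptyset\}$, which is what the remaining bullets require. The first bullet is a triviality: every edge of $G^L$ joins vertices with intersecting lists, so no vertex reachable from $u_0$ in $G^L$ can have an empty list, and the hypothesis $|L(u)|\in\{0,2,3\}$ then forces $|L(u)|\in\{2,3\}$ on $N^2_{G^L}[u_0]$.

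For the second bullet I would fix $w\in N^2_{G^L}(u_0)$ together with a common $G^L$-neighbour $v$ of $u_0$ and $w$. Since $u_0w\notin E(G^L)$, either $u_0w\notin E(G)$ -- in which case $u_0-v-w$ is an induced $P_3$ whose $L$-type, having two nonempty intersections forced by the two edges and all three lists of size at least $2$, can only fail to be good through $L(u_0)\cap L(w)=\emptyset$ -- or else $u_0w\in E(G)$ with $L(u_0)\cap L(w)=\emptyset$ outright. Either way $L(w)\cap\{1,2,3\}=\emptyset$, and $|L(w)|\ge 2$ gives $L(w)=\{4,5\}$. The third bullet is then immediate, since $vw\in E(G^L)$ forces $L(v)\cap\{4,5\}\neq\emptyset$, placing $v$ in $A$ or $B$ and hence $w$ in $A'\cup B'$. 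For the fourth bullet, two distinct vertices of $A$ share colour $4$, so if they are nonadjacent in $G^L$ they are nonadjacent in $G$, and then routing through $u_0$ (whose list meets both) yields an induced $L$-good $P_3$; thus $A$, and symmetrically $B$, is a clique.

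The fifth bullet is the crux and the main obstacle. Its content lies over $N^2_{G^L}(u_0)$, where $L(w)=\{4,5\}\ni 4$ (the vertex $u_0$ itself is complete to $A$ and to $B$ by definition). If such a $w$ had a neighbour $a_1$ and a nonneighbour $a_2$ in $A$, then $4\in L(w)\cap L(a_2)$ together with $wa_2\notin E(G^L)$ forces $wa_2\notin E(G)$, and $w-a_1-a_2$ -- using the clique edge $a_1a_2$ from the fourth bullet -- is an induced $P_3$ carrying colour $4$ in all three lists, hence $L$-good, a contradiction. So each such $w$ is complete or anticomplete to $A$, and symmetrically to $B$ via colour $5$; the ``in particular'' clause follows since a vertex of $A'$, having a neighbour in $A$, must then be complete to $A$. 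The sixth bullet repeats this template: two vertices of $A'$ both carry list $\{4,5\}$ and are complete to $A$, so if they were nonadjacent (equivalently, nonadjacent in $G$) then routing through any $a\in A$ produces an induced $L$-good $P_3$.

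Finally, under the extra hypotheses I would argue as follows. Since $N^2_{G^L}(u_0)=A'\cup B'$ has at least two elements while $|A'|,|B'|\le 1$, necessarily $|A'|=|B'|=1$ and $A'\cap B'=\emptyset$; writing $A'=\{w_A\}$ and $B'=\{w_B\}$, their nonemptiness forces $A,B\neq\emptyset$. If some $v\in A\cap B$ existed, then $w_A$, being complete to $A$, would be adjacent to $v\in B$ and hence complete to $B$ by the fifth bullet, so $w_A\in B'$, contradicting $A'\cap B'=\emptyset$; thus $A\cap B=\emptyset$. The same disjointness shows $w_A$ has no $G^L$-neighbour in $B$, and since every $b\in B$ satisfies $5\in L(w_A)\cap L(b)$ this is equivalent to anticompleteness in $G$, giving the third item (and symmetrically for $w_B$ and $A$). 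For the last item, if $a\in A$ and $b\in B$ shared a colour $d$, then either $ab\in E(G)$, making $w_A-a-b$ an induced $L$-good $P_3$ (colours $4,d,5$), or $ab\notin E(G)$, making $a-u_0-b$ an induced $L$-good $P_3$ (via $\{1,2,3\}$ and $d$); both are impossible, so $L(a)\cap L(b)=\emptyset$. Throughout, the only real difficulty is ensuring that a suitable third vertex is always on hand to complete an $L$-good $P_3$, which is precisely what the list $\{4,5\}$ on the second neighbourhood and the clique structure of $A,B,A',B'$ provide.
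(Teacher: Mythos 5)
Your proof is correct and follows essentially the same route as the paper's: each bullet is established by exhibiting a forbidden $L$-good $P_3$, with the same case analysis throughout (and your reading of the definitions of $A'$ and $B'$ correctly repairs an evident typo in the statement, matching what the paper's own argument uses). The only caveat—one shared by the paper's own proof—is that your fifth bullet is argued only for $u_0$ and the open second neighbourhood $N^2_{G^L}(u_0)$ rather than for all of $N^2_{G^L}[u_0]$ as literally stated; this restricted version is exactly what the paper proves and is all that is ever used downstream.
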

\begin{proof}
	The first bullet follows directly from $L(u_0)=\{1,2,3\}\neq \emptyset$, $u_0\in N^2_{G^L}[u_0]$ and $G^L|N^2_{G^L}[u_0]$ being connected.
	
	To see the second bullet, let $w\in N^2_{G^L}(u_0)$ and $v\in N_{G^L}(u_0)\cap N_{G^L}(w)$. If $L(u_0)\cap L(w)\neq \emptyset$, then $u_0w\notin E(G)$, and so from $u_0v,vw\in E(G^L)$ and first bullet of  Lemma \ref{ABhomo}, it follows that $u_0-v-w$ is an $L$-good $P_3$ in $G$, which is impossible. Consequently, we have $L(w)\subseteq [5]\setminus L(u_0)=\{4,5\}$, and so by the first bullet of Lemma \ref{ABhomo}, we have $L(w)=\{4,5\}$. This proves the second bullet of  Lemma \ref{ABhomo}.
	
	 To verify the third bullet, note that the inclusion $A'\cup B'\subseteq N^2_{G^L}(u_0)$ is clear. Now, let $w\in N^2_{G^L}(u_0)$ and $v\in N_{G^L}(u_0)\cap N_{G^L}(w)$. Then by the second bullet of  Lemma \ref{ABhomo}, we have $L(v)\cap \{4,5\}=L(v)\cap L(w)\neq \emptyset$, and so $v\in A\cup B$, which in turn implies that $w\in A'\cup B'$. So $N^2_{G^L}(u_0)\subseteq A'\cup B'$, and the third bullet of  Lemma \ref{ABhomo} follows.
	
	To see the fourth bullet, suppose for a contradiction that there exist $a_1,a_2\in A$ with $a_1a_2\notin E(G^L)$. Therefore, since $4\in L(a_1)\cap L(a_2)$, we have $a_1a_2\notin E(G)$. This, together with $a_1,a_2\in N_{G^L}(u_0)\subseteq N_G(u_0)$ and the first bullet of  Lemma \ref{ABhomo}, implies that $a_1-u_0-a_2$ is an $L$-good $P_3$ in $G$, a contradiction. So $A$ is a clique of $G^L$. Similarly, one can show that $B$ is also a clique of $G^L$,  and so the fourth bullet of  Lemma \ref{ABhomo} follows.
	
	Now we argue the fifth bullet. Suppose for a contradiction that there exists $w\in N^2_{G^L}(u_0)$, such that in $G^L$, $w$ has both a neighbor $x$ and a non-neighbor $y$ in either $A$ or $B$, say the former. By the fourth bullet of Lemma \ref{ABhomo}, we have $xy\in E(G^L)\subseteq E(G)$. Also, by the second bullet of Lemma \ref{ABhomo}, we have $4\in L(w)\cap L(x)\cap L(y)$, which in turn implies that $wy\notin E(G)$; that is, $w-x-y$ is an induced $P_3$ in $G$. From this and the second bullet of Lemma \ref{ABhomo}, it follows that $w-x-y$ is an $L$-good $P_3$ in $G$, a contradiction. The case $x,y\in B$ can be handled similarly. This proves the fifth bullet of Lemma \ref{ABhomo}.
	
For the sixth bullet, suppose for a contradiction that  $a'_1,a'_2\in A'$ are not adjacent in $G^L$. Then by the second bullet of  Lemma \ref{ABhomo}, we have $L(a'_1)=L(a'_2)=\{4,5\}$, and so $a'_1$ and $a'_2$ are not adjacent in $G$. Note that since $A'\neq \emptyset$, by the definition of $A'$, we have $A\neq \emptyset$, and so we may pick a vertex $a_0\in A$. Therefore, by the sixth bullet of Lemma \ref{ABhomo}, $a'_1-a_0-a'_2$ is an induced $P_3$ in $G$ with $4\in L(a'_1)\cap L(a_0)\cap L(a'_2)$, which along with the first bullet of Lemma \ref{ABhomo}, implies that $a'_1-a_0-a'_2$ is an $L$-good $P_3$ in $G$, a contradiction. So the sixth bullet of Lemma \ref{ABhomo} follows.
	
The rest of the proof aims to verify the seventh bullet of Lemma \ref{ABhomo}. For the first dash, from $|N_{G^L}^2(u_0)|\geq 2$ and the third bullet of Lemma \ref{ABhomo}, we have $|A'\cup B'|\geq 2$, which along with $|A'|,|B'|\leq 1$, implies that $|A'|=|B'|=1$ and $A'\cap B'=\emptyset$, as desired. Henceforth, we assume $A'=\{a'\}$ and $B'=\{b'\}$ for distinct $a',b'$. Note that by the second bullet of Lemma \ref{ABhomo}, we have $L(a')=L(b')=\{4,5\}$.

For the second dash, note that the fact that $|A'|=|B'|=1$ along with the definition of $A'$ and $B'$, implies that $A,B\neq \emptyset$. Next we show that $A\cap B=\emptyset$. Suppose not. Let $z\in A\cap B$. By the fifth bullet of Lemma \ref{ABhomo}, $a'$ and $b'$ are adjacent to $z$ in $G^L$. Thus, from $z\in A\cap B$ and again the fifth bullet of Lemma \ref{ABhomo}, we deduce that $A'\cup B'=\{a',b'\}$ is complete to $A\cup B$ in $G^L$. But then from the definition of $A'$ and $B'$, it follows that $A=B$, which in turn implies that $A'=B'$, a contradiction with the first dash. So $A\cap B=\emptyset$, as desired. 

To see the third dash, suppose for a contradiction that $a'$ has a neighbor $b\in B$ in $G$. Then, since $5\in L(a')\cap L(b)$, we have $a'b\in E(G^L)$. But then from the definition of $B'$, we have $a'\in B'$, and so $a'\in A'\cap B'$, which violates the first dash. Note that if $b'$ has a neighbor in $A$, then a contradiction can be derived similarly.

Finally, we prove the fourth dash. Suppose not. Let $L(a)\cap L(b)\neq \emptyset$ for distinct $a\in A$ an $b\in B$. If $ab\notin E(G)$, then from $a,b\in N_{G^L}(u_0)$ and the first bullet of Lemma \ref{ABhomo}, we deduce that $a-u_0-b$ is an $L$-good $P_3$, which is impossible. As a result, we have $ab\in E(G)$, and so $ab\in E(G^L)$. By the fifth bullet of Lemma \ref{ABhomo}, $a'$ is adjacent to $a$ in $G^L$, and by the third dash, $a'$ is not adjacent to $b$ in $G$. So $a'-a-b$ is an induced $P_3$ in $G$. This, along with $4\in L(a')\cap L(a)$, $5\in L(a')\cap L(b)$ and $L(a)\cap L(b)\neq \emptyset$ implies that $a'-a-b$ is an $L$-good $P_3$ in $G$, a contradiction. This proves the fourth dash of the seventh bullet of Lemma \ref{ABhomo}, and so concludes the proof.
\end{proof} 	
Let $k\in \mathbb{N}$ and $(G,L)$ be an instance of the \textsc{List-$k$-Coloring Problem}. We define $p(G,L)=|V(G)|+\sum_{v\in V(G)}|L(v)|$. It is immediate from the definition that $p(G,L)\leq (k+1)|V(G)|$. For a $(G,L)$-refinement $(G'L')$, we say that $(G',L')$ \textit{represents} $(G,L)$ if the following hold.
	\begin{enumerate}[(R1)]
		\item\label{r1} $p(G',L')< p(G,L)$.
		\item\label{r2} If $G$ admits a frugal $L$-coloring, then $G'$ admits a frugal $L'$-coloring. 
		\item\label{r3} If $G'$ admits an $L'$-coloring, then $G$ admits an  $L$-coloring.
\end{enumerate}
		\begin{theorem}\label{23}
		Let  $(G,L)$ be an instance of the \textsc{List-$5$-Coloring Problem} such that $|L(v)|\neq 1$ for all $v\in V(G)$ and $G$ has no $L$-good $P_3$. Moreover, suppose that there exists a vertex $u_0\in V(G)$ with $|L(u_0)|\geq 3$. Then there exists a $(G,L)$-refinement $(\tilde{G},\tilde{L})$ with the following specifications. 
		 	\begin{itemize}
		 	\item $(\tilde{G},\tilde{L})$ can be computed from $(G,L)$ in time $\mathcal{O}(|V(G)|^2)$.
		 		\item $|\tilde{L}(v)|\neq 1$ for all $v\in V(\tilde{G})$.
		 	\item $(\tilde{G},\tilde{L})$ represents $(G,L)$.
		 \end{itemize}
		\end{theorem}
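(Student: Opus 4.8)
The plan is to produce $(\tilde{G},\tilde{L})$ by a single structural simplification localized near $u_0$, possibly followed by the cleanup of Theorem \ref{kill1} to restore the condition $|\tilde{L}(v)|\neq 1$. Throughout I would exploit the asymmetry between \eqref{r2} and \eqref{r3}: both become vacuous if I can \emph{certify that $G$ has no frugal $L$-coloring}, in which case I output the infeasible refinement obtained from $(G,L)$ by setting the list of $u_0$ to $\emptyset$ — this strictly drops $p$, preserves $|\tilde L(v)|\neq 1$, and makes \eqref{r2} (no frugal coloring of $G$) and \eqref{r3} (no coloring of $\tilde G$) both vacuous. First I would compute $G^L$ in time $\mathcal O(|V(G)|^2)$. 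If some list is empty, or some vertex has $G^L$-degree at least $5$, then by Lemma \ref{smallgooddeg} there is no frugal $L$-coloring and I escape via the infeasible refinement. Hence I may assume every list is nonempty and every $G^L$-degree is at most $4$.

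Next I would clear the cases where $u_0$ can simply be deleted. If $|L(u_0)|>|N_{G^L}(u_0)|$, then since the only coloring constraints on $u_0$ involve its $G^L$-neighbours, $u_0$ always has a free colour; I set $\tilde G=G-u_0$ with inherited lists, so that \eqref{r1} holds, \eqref{r2} follows from Lemma \ref{frugalspread}, and \eqref{r3} holds by colouring $u_0$ greedily last. This disposes of $|L(u_0)|=5$ entirely and of $|L(u_0)|=4$ unless $|N_{G^L}(u_0)|=4$. In the surviving cases $3\le |L(u_0)|\le |N_{G^L}(u_0)|\le 4$, so by Lemma \ref{smallgooddeg} any frugal $L$-coloring is rainbow on $N_{G^L}[u_0]$; when $|N_{G^L}(u_0)|=4$ this forces all five colours to appear, so each colour missing from $L(u_0)$ occurs exactly once on a neighbour. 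For $L(u_0)=\{1,2,3,4\}$ this pins colour $5$ to a neighbour carrying $5$: if none exists I escape, and if exactly one such neighbour $w$ exists I force $\tilde L(w)=\{5\}$, which strictly drops $p$ with \eqref{r2},\eqref{r3} immediate, and then I clean the resulting singleton with Theorem \ref{kill1}. After this I may assume I am in the canonical configuration $L(u_0)=\{1,2,3\}$ with every list of size in $\{0,2,3\}$.

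In this core case I would invoke Lemma \ref{ABhomo}, whose conclusions pin down the local picture: every second $G^L$-neighbour of $u_0$ has list exactly $\{4,5\}$ and is reached only through the cliques $A=N_{G^L}(u_0)\cap L^{(4)}$ and $B=N_{G^L}(u_0)\cap L^{(5)}$, each vertex is complete or anticomplete to $A$ and to $B$, and in the tight regime $|A'|,|B'|\le 1$ the structure becomes rigid (with $|A'|=|B'|=1$, $A,B$ nonempty and disjoint, and the stated anticompleteness and list-disjointness). I would combine this with the rainbow property: when $\deg_{G^L}(u_0)=4$, colours $4$ and $5$ each occur exactly once on $N_{G^L}(u_0)$, so $A,B\neq\emptyset$ (else I escape), and the clique-plus-homogeneity structure isolates the unique neighbour that can carry each missing colour, forcing a colour; when $\deg_{G^L}(u_0)=3$ the second-neighbourhood rigidity lets me either force a colour on an $A$- or $B$-vertex or show that $u_0$ is removable because its colours $\{1,2,3\}$ never clash with the $\{4,5\}$-lists of the second neighbourhood. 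In each sub-case the output is a vertex deletion or a single forced colour, i.e.\ a refinement with $p(\tilde G,\tilde L)<p(G,L)$ satisfying \eqref{r2} and \eqref{r3}; I then apply Theorem \ref{kill1} to eliminate any singleton list, which composes cleanly with \eqref{r1}–\eqref{r3} and keeps the whole computation within $\mathcal O(|V(G)|^2)$.

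The hard part will be the core case analysis driven by Lemma \ref{ABhomo}: I must verify that in \emph{every} configuration of $A,B,A',B'$, not only the rigid one, the interaction between the rainbow constraint on $N_{G^L}[u_0]$ and the forced $\{4,5\}$-lists of the second neighbourhood really does force a colour or a deletion, and that each forcing is valid in both directions of representation. A second, more technical, obstacle is the leftover possibility $L(u_0)=\{1,2,3,4\}$ with two or more neighbours carrying the missing colour $5$: here $B$ is again a clique by the no-good-$P_3$ condition, but Lemma \ref{ABhomo} does not apply (it assumes $|L(u_0)|=3$), so a separate but analogous clique/rainbow argument is required. Finally, the bookkeeping point — that forcings naturally create size-$1$ lists — is handled uniformly by post-composing with Theorem \ref{kill1}, so it does not threaten correctness, only the cleanliness of the statement.
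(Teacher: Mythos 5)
Your opening moves coincide with the paper's (escape via Lemma \ref{smallgooddeg} when some $G^L$-degree is at least $5$; delete $u_0$ greedily when $|L(u_0)|>|N_{G^L}(u_0)|$), and Lemma \ref{ABhomo} is indeed the right tool, but the core case $L(u_0)=\{1,2,3\}$ --- which is where the whole difficulty sits --- is not actually proved, and the moves you sketch for it are invalid. First, the rigid structure in the last bullet of Lemma \ref{ABhomo} has hypotheses $|N^2_{G^L}(u_0)|\ge 2$ and $|A'|,|B'|\le 1$; your outline contains no step that disposes of $|N^2_{G^L}(u)|\le 1$ (the paper brute-forces the constant-size set $N^2_{G^L}[u]$ and deletes $N_{G^L}[u]$), nor of $|A'|\ge 2$ or $|B'|\ge 2$ (there the paper removes $\{4,5\}$ from every list in $A$, resp.\ $B$), so you cannot even invoke the rigidity you plan to rely on. Second, in the subcase $|N_{G^L}(u_0)|=4$ there is in general no ``unique neighbour that can carry'' the missing colour: $A$ may consist of two vertices with lists $\{1,4\}$ and $\{2,4\}$, either of which may receive $4$ in a frugal colouring; the sound forcing is on the second neighbourhood ($4$ is removed from the lists of $A'$ and $5$ from the lists of $B'$), not on $A$ or $B$. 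Third, in the subcase $|N_{G^L}(u_0)|=3$ both of your proposed moves fail: forcing a colour on an $A$- or $B$-vertex violates \eqref{r2}, since a frugal $L$-colouring may give that vertex the other colour in its list, and deleting $u_0$ violates \eqref{r3}, since an $L$-colouring of $G-u_0$ may place the colours $1,2,3$ on the three $G^L$-neighbours of $u_0$. The paper's resolution is neither of these: it replaces $N_{G^L}[u_0]$ by the three-vertex gadget $\{u_0,a,b\}$ with lists $\{i,j\}$, $\{i,4\}$, $\{j,5\}$, and the correctness of that replacement in turn requires two further preprocessing steps (if $A$, resp.\ $B$, contains two distinct vertices with the same $2$-element list, remove $4$ from the lists of $A'$, resp.\ $5$ from the lists of $B'$) that are entirely absent from your plan.

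The leftover case $L(u_0)=\{1,2,3,4\}$ with two or more neighbours whose lists contain $5$, which you defer to ``a separate but analogous clique/rainbow argument,'' cannot be closed by forcing either: no single vertex is determined to receive colour $5$, and deleting $u_0$ again fails \eqref{r3}. What rescues this case is that $|L(u_0)|\ge 4$, together with no $L$-good $P_3$ and no singleton lists, forces $N^2_{G^L}(u_0)=\emptyset$: a vertex $w$ at $G^L$-distance $2$ would satisfy $L(w)\cap L(u_0)=\emptyset$ (otherwise $u_0$, a common $G^L$-neighbour, and $w$ form an $L$-good $P_3$), hence $L(w)\subseteq [5]\setminus L(u_0)$, a set with at most one element, while $L(w)$ meets the list of its $G^L$-neighbour, giving $|L(w)|=1$, a contradiction. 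Consequently the $G^L$-component of $u_0$ has at most five vertices and can be exhaustively colour-checked and deleted wholesale --- exactly the ``small second neighbourhood'' brute-force step that is also missing from your core case. In short, the one step you omitted is load-bearing in both places where your proposal breaks down, and restoring it essentially reproduces the paper's algorithm.
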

	\begin{proof}
\sloppy Without loss of generality, we may assume that $\{1,2,3\}\subseteq L(u_0)$. We define the four sets  $A=N_{G^L}(u_0)\cap L^{(4)}$,  $B=N_{G^L}(u_0)\cap L^{(5)}$, $A'=\{w\in N^2_{G^L}(u_0): N_{G^L}(w)\cap A\neq \emptyset\}$ and $B'=\{w\in N^2_{G^L}(u_0): N_{G^L}(w)\cap B\neq \emptyset\}$ as in Lemma \ref{ABhomo}. For every vertex $u\in V(G)$, let $\Phi_u$ be the set of all frugal $L|_{N^2_{G^L}[u]}$-colorings of $G|N^2_{G^L}[u]$.  Consider the following algorithm, called \textit{algorithm} \textbf{\textsf{A}}, which, given $G$, $L$ and $u_0$, computes a $(G,L)$-refinement $(G^*,L^*)$.
	\begin{enumerate}[\textit{Step }1:]
		\item \label{s1} Using BFS, compute $N_{G^L}(v)$ and $N^2_{G^L}(v)$ for every $v\in V(G)$. Go to step \ref{s2}, and from each step, proceed to the one below unless instructed otherwise.
		\item \label{s2} Compute $A$, $B$, $A'$ and $B'$.
		\item \label{s3} If $|N_{G^L}(u)|\geq 5$ for some $u\in V(G)$, then compute $G^*=G$,  $L^*(v)=\emptyset$ for every $v\in V(G^*)$. Return $(G^*,L^*)$.
		\item \label{s4} If $|N_{G^L}(u)|< |L(u)|$ for some $u\in V(G)$, then compute $G^*=G-u$, $L^*(v)=L(v)$ for every $v\in V(G^*)$. Return $(G^*,L^*)$.
		\item \label{s5} If $|N^2_{G^L}(u)|\leq 1$ for some $u\in V(G)$, then
		\begin{enumerate}
			\item \label{s5a} Compute $\Phi_u$ by brute-forcing.
			\item \label{s5b} If $\Phi_u=\emptyset$, then compute $G^*=G$ and  $L^*(v)=\emptyset$ for every $v\in V(G^*)$. Return $(G^*,L^*)$.
				\item \label{s5c} Otherwise, Compute $G^*=G-N_{G^L}[u]$, $L^*(v)=\{i\in L(v): \phi(v)=i\text{ for some }\phi\in \Phi_u\}$ for every $v\in N^2_{G^L}(u)$ and $L^*(v)=L(v)$ for every $v\in V(G^*)\setminus N^2_{G^L}(u)$. Return $(G^*,L^*)$.
		\end{enumerate}
	
	\item \label{s6} If $|A'|\geq 2$, then compute $G^*=G$, $L^*(a)=L(a)\setminus \{4,5\}$ for every $a\in A$ and $L^*(v)=L(v)$ for every $v\in V(G^*)\setminus A$. Return $(G^*,L^*)$.
	\item \label{s7} If $|B'|\geq 2$, then compute $G^*=G$, $L^*(b)=L(b)\setminus \{4,5\}$ for every $b\in B$ and $L^*(v)=L(v)$ for every $v\in V(G^*)\setminus B$.  Return $(G^*,L^*)$.
	\item \label{s8} If there exist distinct vertices $a_1,a_2\in A$ with $L(a_1)=L(a_2)$ and $|L(a_1)|=|L(a_2)|=2$, then compute $G^*=G$, $L^*(a')=L(a')\setminus \{4\}$ for every $a'\in A'$ and $L^*(v)=L(v)$ for every $v\in V(G^*)\setminus A'$. Return $(G^*,L^*)$.
	\item \label{s9} If there exist distinct vertices $b_1,b_2\in B$ with $L(b_1)=L(b_2)$ and $|L(b_1)|=|L(b_2)|=2$, then compute $G^*=G$, $L^*(b')=L(b')\setminus \{5\}$ for every $b'\in B'$ and $L^*(v)=L(v)$ for every $v\in V(G^*)\setminus B'$. Return $(G^*,L^*)$.
\item \label{s10} If $|N_{G^L}(u_0)|\geq 4$, then compute $G^*=G$, $L^*(a')=L(a')\setminus \{4\}$ for every $a'\in A'$, $L^*(b')=L(b')\setminus \{5\}$ for every $b'\in B'$ and $L^*(v)=L(v)$ for every $v\in V(G^*)\setminus (A'\cup B')$. Return $(G^*,L^*)$.
\item \label{s11} Compute a minimal subset $M$ of $L(u_0)$ such that $M\cap L(a)\neq \emptyset$ for some $a\in A$ and $M\cap L(b)\neq \emptyset$ for some $b\in B$. Choose $i\in M\cap L(a)$ and $j\in M\cap L(b)$. Compute $G^*=G|(\{u_0,a,b\}\cup (V(G)\setminus N_{G^L}[u_0]))$, $L^*(u_0)=M$, $L(a)=\{i,4\}$, $L(b)=\{j,5\}$ and $L^*(v)=L(v)$ for every $v\in V(G^*)\setminus \{u_0,a,b\}$. Return $(G^*,L^*)$.
	\end{enumerate}
	As a  general property of algorithm \textbf{\textsf{A}}, note that for each step other than steps \ref{s1}, \ref{s2} and \ref{s5a}, if the corresponding `if condition' is satisfied, then the algorithm terminates at that step. In particular,

\sta{\label{size4done} Suppose that $|L(u)|\geq 4$ for some $u\in V(G)$. Then algorithm \textbf{\textsf{A}} terminates at or before step \ref{s5}.}
Let $i\in [5] $ with $[5]\setminus L(u)\subseteq \{i\}$. We claim that  $N^2_{G^L}(u)=\emptyset$. Suppose not. Let $w\in N^2_{G^L}(u)$. Then since $G$ has no good $P_3$, we have $L(u)\cap L(w)=\emptyset$, and so $L(w)\subseteq \{i\}$. Also, there exists $v\in N_{G^L}(u)$ which is adjacent to $w$ in $G^L$, and so $L(v)\cap L(w)\neq\emptyset$. Thus, $L(w)=\{i\}$, and so $|L(w)|=1$, a contradiction. This proves the claim. But then the `if condition' in step \ref{s5} is satisfied, and so algorithm \textbf{\textsf{A}} terminates at or before step \ref{s5}. This proves \eqref{size4done}.\vsp

We deduce:

\sta{\label{getbullets} If algorithm \textbf{\textsf{A}} does not stop at steps \ref{s3}-\ref{s7}, then all seven bullets of Lemma \ref{ABhomo} hold.}

Note that algorithm \textbf{\textsf{A}} does not terminate at step \ref{s5}. This has two consequences. First, by \eqref{size4done} and the assumption of Theorem \ref{23}, we have $|L(u)|\in \{0,2,3\}$ for every $u\in V(G)$. In particular, we have $L(u_0)=\{1,2,3\}$. Second, the `if condition' of step \ref{s5} is not satisfied, and in particular $|N_{G^L}^2(u_0)|\geq 2$.  In addition, since algorithm \textbf{\textsf{A}} does not stop at steps \ref{s6} and \ref{s7}, the `if condition' in these two steps is not satisfied, and so $|A'|,|B'|\leq 1$. Therefore, all seven bullets of Lemma \ref{ABhomo} hold. This proves \eqref{getbullets}.

\sta{\label{23runtime} Algorithm \textbf{\textsf{A}} terminates in finite time. Indeed, it runs in time $\mathcal{O}(|V(G)|^2)$.}

Note that if the algorithm does not terminate in steps \ref{s3}-\ref{s10}, then it arrives at step \ref{s11}, and by \eqref{getbullets}, all seven bullets of Lemma \ref{ABhomo} hold.  In particular, by the second dash of the seventh bullet of Lemma \ref{ABhomo}, we have $A,B\neq \emptyset$. As a result, the set $M$ mentioned in step \ref{s11} is well-defined. So algorithm \textbf{\textsf{A}} executes step \ref{s11}, and stops in finite time. We leave the reader to check the straightforward fact that the overall running time of algorithm \textbf{\textsf{A}} is $\mathcal{O}(|V(G)|^2)$.
This proves \eqref{23runtime}.\vsp

We need to show that the output $(G^*,L^*)$ of algorithm \textbf{\textsf{A}} represents $(G,L)$. The proof is broken into several statements, below.

\sta{\label{geq5done} If algorithm \textbf{\textsf{A}} terminates at step \ref{s3}, then $(G^*,L^*)$ represents $(G,L)$.}
Note that $|L^*(u_0)|=0<3\leq |L(u_0)|$, and so $(G^*,L^*)$ satisfies (R\ref{r1}). Also, the `if condition' of step \ref{s3} is satisfied, and there exists a vertex $u\in V(G)$ with $|N_{G^L}(u)|\geq 5$. Thus, $L$ being a $5$-list assignment, by Lemma \ref{smallgooddeg}, $G$ admits no frugal $L$-coloring, and so $(G^*,L^*)$ vacuously satisfies (R\ref{r2}). Moreover, since $L^*(v)=\emptyset$ for every $v\in V(G^*)$, $G^*$ admits no $L^*$-coloring, and so $(G^*,L^*)$ vacuously satisfies (R\ref{r3}), as well. This proves \eqref{geq5done}.

\sta{\label{smalldegdone} If algorithm \textbf{\textsf{A}} terminates at step \ref{s4}, then $(G^*,L^*)$ represents $(G,L)$.}
The `if condition' of step \ref{s4} is satisfied, and so we have $|N_{G^L}(u)|< |L(u)|$ for some $u\in V(G)$, $G^*=G-u$ and $L^*(v)=L(v)$ for every $v\in V(G^*)$. As a result, $|V(G^*)|<|V(G)|$, and so $(G^*,L^*)$ satisfies (R\ref{r1}). Moreover, if $G$ admits a frugal $L$-coloring $\phi$, then by Lemma \ref{frugalspread}, $\phi|_{V(G^*)}$ is a frugal $L^*$-coloring of $G^*$, and so $(G^*,L^*)$ satisfies (R\ref{r2}). Now, suppose that $G^*$ admits an $L^*$-coloring $\psi$. Since $|\{\psi(v):v\in N_{G^L}(u)\}|\leq |N_{G^L}(u)|<|L(u)|$, there exists a color $j\in L(u)\setminus \{\psi(v):v\in N_{G^L}(u)\}$. Hence, extending $\phi$ to $G$ by defining $\psi(u)=j$, we obtain an $L$-coloring of $G$, and so $(G^*,L^*)$ satisfies (R\ref{r3}). This proves \eqref{smalldegdone}.

\sta{\label{C<1done2} If algorithm \textbf{\textsf{A}} terminates at step \ref{s5}, then $(G^*,L^*)$ represents $(G,L)$.}

The `if condition' of step \ref{s5} is satisfied, and so we have $|N^2_{G^L}(u)|\leq 1$ for some $u\in V(G)$. Now, suppose that $\Phi_u=\emptyset$. Then algorithm \textbf{\textsf{A}} terminates at step \ref{s5b}, $G^*=G$ and $L^*(v)=\emptyset$ for every $v\in V(G)$. Note that $|L^*(u_0)|=0<3\leq |L(u_0)|$, and so $(G^*,L^*)$ satisfies (R\ref{r1}). Also, from $\Phi_u=\emptyset$, it follows that $G|N^2_{G^L}[u]$ admits no frugal $L|_{N^2_{G^L}[u]}$-coloring, and so $G$ admits no frugal $L$-coloring. Thus, $(G^*,L^*)$ vacuously satisfies (R\ref{r2}). Moreover, since $L^*(v)=\emptyset$ for every $v\in V(G^*)$, $G^*$ admits no $L^*$-coloring, and $(G^*,L^*)$ vacuously satisfies (R\ref{r3}), as well.

Therefore, we may assume that $\Phi_u\neq \emptyset$. Then algorithm \textbf{\textsf{A}} terminates at step \ref{s5c}, $~{G^*= G- N_{G^L}[u]}$, $L^*(v)=\{i\in L(v): \phi(v)=i\text{ for some }\phi\in \Phi_u\}$ for every $v\in N^2_{G^L}(u)$ and $L^*(v)=L(v)$ for every $v\in V(G^*)\setminus N^2_{G^L}(u)$. It follows that $|V(G^*)|< |V(G)|$, and so $(G^*,L^*)$ satisfies (R\ref{r1}). For (R\ref{r2}), suppose that $G$ admits a frugal $L$-coloring $\psi$. Then $\phi=\psi|_{N^2_{G^L}[u]}$ is readily seen to be a frugal $L|_{N^2_{G^L}[u]}$-coloring of $G|N^2_{G^L}[u]$; that is, $\phi\in \Phi_u$. So for every $v\in N_{G^L}^2(u)$, $\psi(v)=\phi(v)\in L^*(v)$. Also $\psi(v)\in L(v)=L^*(v)$ for all $v\in V(G^*)\setminus N_{G^L}^2(u)$. Thus, $\psi$ being a frugal $L$-coloring of $G$,  it follows from Lemma \ref{frugalspread} that $\psi|_{V(G^*)}$ is a frugal $L^*$-coloring of $G^*$. This shows that $(G^*,L^*)$ satisfies (R\ref{r2}).
Now we need to argue that $(G^*,L^*)$ satisfies (R\ref{r3}). Suppose that $G^*$ admits an $L^*$-coloring $\psi'$. Since $|N^2_{G^L}(u)|\leq 1$ and $\psi'(c)\in L^*(c)$ for every $c\in N^2_{G^L}(u)$, there exists $\phi'\in \Phi_u$ such that $\phi'(c)=\psi'(c)$ for all $c\in N^2_{G^L}(u)$. We define a coloring $\theta:V(G)\rightarrow [5]$ as follows. For every $v\in N_{G^L}[u_0]$, let $\theta(v)=\phi'(v)$. For all $c\in N^2_{G^L}(u)$, let $\theta(c)=\phi'(c)=\psi'(c)$. For every $v\in V(G)\setminus N_{G^L}^2[u_0]=V(G^*)\setminus N_{G^L}^2(u_0)$, let $\theta(v)=\psi'(v)$. We claim that $\theta$ is an $L$-coloring of $G$. To see this, note that since $\phi'\in \Phi_u$, we have $\theta(v)=\phi'(v)\in L(v)$ for every $v\in N^2_{G^L}[u_0]$. Also,  $\theta(v)=\psi'(v)\in L^*(v)=L(v)$ for every $v\in  V(G)\setminus N_{G^L}^2[u_0]$. So $\theta(v)\in L(v)$ for all $v\in V(G)$. It remains to show that $\theta$ is proper. Let $xy\in E(G)$. If $x,y\in  N^2_{G^L}[u_0]$, then since $\phi'$ is proper, we have $\theta(x)=\phi'(x)\neq \phi'(y)=\theta(y)$. If $x,y\in V(G)\setminus N_{G^L}[u_0]=V(G^*)$, then since $\psi'$ is proper, $\theta(x)=\psi'(x)\neq \psi'(y)=\theta(y)$. Finally, let $x\in N_{G^L}[u_0]$ and $y\in V(G)\setminus N_{G^L}^2[u_0]$. Then $xy\notin E(G^L)$, and so $L(x)\cap L(y)=\emptyset$. Hence $\theta(x)\neq\theta(y)$, and $\theta$ is proper. This proves \eqref{C<1done2}.

\sta{\label{A'2} If algorithm \textbf{\textsf{A}} terminates at step \ref{s6}, then $(G^*,L^*)$ represents $(G,L)$.}
The `if condition' of step \ref{s6} is satisfied, and so we have $|A'|\geq 2$, $G^*=G$, $L^*(a)=L(a)\setminus \{4,5\}$ for every $a\in A$ and $L^*(v)=L(v)$ for every $v\in V(G^*)\setminus A$.
Note that from $|A'|\geq 2$ and the definition of $A'$, it follows that $A\neq \emptyset$, and so for every $a\in A$, we have $4\in L(a)\setminus L^*(a)$, which in turn implies that $|L^*(a)|<|L(a)|$. This shows that $(G^*,L^*)$ satisfies (R\ref{r1}).

For (R\ref{r2}), suppose that $G$ admits a frugal $L$-coloring $\phi$. Let $a'_1,a'_2\in A'$ be distinct. Since the algorithm does not stop at step \ref{s5}, by \eqref{size4done} and the assumption of Theorem \ref{23}, we have $|L(u)|\in \{0,2,3\}$ for every $u\in V(G)$. In particular, we have $L(u_0)=\{1,2,3\}$. So by the second bullet of Lemma \ref{ABhomo}, we have $L(a'_1)=L(a'_2)=\{4,5\}$, by the fifth bullet of Lemma \ref{ABhomo}, $a'_1$ and $a'_2$ are complete to $A$ in $G^L$ (and so in $G$), and by the sixth bullet of Lemma \ref{ABhomo}, $a'_1$ and $a'_2$ are adjacent in $G^L$ (and so in $G$). As a result, we have $\{\phi(a'_1),\phi(a'_2)\}=\{4,5\}$, and for every $a\in A$, we have $\phi(a)\in L(v)\setminus \{\phi(a'_1),\phi(a'_2)\}=L(v)\setminus\{4,5\}=L^*(a)$. In addition, we have $\phi(v)\in L(v)=L^*(v)$ for every $v\in V(G^*)\setminus A$. Thus, $\phi$ being a frugal $L$-coloring $\phi$, it follows from Lemma \ref{frugalspread} that $\phi|_{V(G^*)}$ is an $L^*$-coloring of $G^*$, and so $(G^*,L^*)$ satisfies (R\ref{r2}). Finally, note that $(G^*,L^*)$ is a spanning $(G,L)$-refinement, and by Lemma \ref{spanspread}, $(G^*,L^*)$ satisfies (R\ref{r3}). This proves \eqref{A'2}.\vsp

The reader may have noticed that steps \ref{s6} and \ref{s7} of algorithm \textbf{\textsf{A}} are symmetrical with respect to $A$ and $B$. As a result, the proof of \eqref{B'2}, the following, is identical to that of \eqref{A'2}, and so we omit it.

\sta{\label{B'2} If algorithm \textbf{\textsf{A}} terminates at step \ref{s7}, then $(G^*,L^*)$ represents $(G,L)$.}

Then we continue with the following.

\sta{\label{bad2A} If algorithm \textbf{\textsf{A}} terminates at step \ref{s8}, then $(G^*,L^*)$ represents $(G,L)$.}

	The `if condition' of step \ref{s8} is satisfied, and so there exist distinct vertices $a_1, a_2\in A$ with $L(a_1)=L(a_2)$ and $|L(a_1)|=|L(a_2)|=2$. Also, we have $G^*=G$, $L^*(a')=L(a')\setminus \{4\}$ for every $a'\in A'$ and $L^*(v)=L(v)$ for every $v\in V(G)\setminus A'$. 
	
	By \eqref{getbullets}, all seven bullets of Lemma \ref{ABhomo} hold. In particular, by the first dash of the seventh bullet of Lemma \ref{ABhomo}, we have $|A'|=1$, say $A'=\{a'_0\}$, and by the second bullet of Lemma \ref{ABhomo}, we have $L(a'_0)=\{4,5\}$. As a result, we have $4\in L(a'_0)\setminus L^*(a'_0)$, which in turn implies that $|L^*(a'_0)|<|L(a'_0)|$. So $(G^*,L^*)$ satisfies (R\ref{r1}).
	
	To argue the validity of (R\ref{r2}), suppose that $G$ admits a frugal $L$-coloring $\phi$. Note that by the fourth bullet of Lemma \ref{ABhomo}, $a_1$ and $a_2$ are adjacent in $G$. So from $L(a_1)=L(a_2)$, $|L(a_1)|=|L(a_2)|=2$ and $4\in L(a_1)\cap L(a_2)$,  we have $4\in \{\phi(a_1),\phi(a_2)\}$. Also, by the fifth bullet of Lemma \ref{ABhomo}, $a'_0$ is adjacent to both $a_1$ and $a_2$. Therefore, we have  $\phi(a'_0)\in L(a'_0)\setminus \{\phi(a_1),\phi(a_2)\}\subseteq L(a'_0)\setminus \{4\}=L^*(a_0)$. In other words, for every $v\in A'=\{a'_0\}$, we have $\phi(v)\in L^*(v)$.  Moreover, for every $v\in V(G^*)\setminus A'$, we have $\phi(v)\in L(v)=L^*(v)$. Hence, $\phi$ being a frugal $L$-coloring of $G$, by Lemma \ref{frugalspread}, $\phi|_{V(G^*)}$ is a frugal $L^*$-coloring of $G^*$, and so $(G^*,L^*)$ satisfies (R\ref{r2}). Finally, note that $(G^*,L^*)$ is spanning $(G,L)$-refinement, and so by Lemma \ref{spanspread}, $(G^*,L^*)$ satisfies (R\ref{r3}). This proves \eqref{bad2A}.\vsp
	
	Again, we observe that steps \ref{s8} and \ref{s9} of algorithm \textbf{\textsf{A}} are symmetrical with respect to $A$ and $B$. For this reason, the proof of \eqref{bad2B} below is identical to that of \eqref{bad2A}, and so we omit it.
	
\sta{\label{bad2B} If algorithm \textbf{\textsf{A}} terminates at step \ref{s9}, then $(G^*,L^*)$ represents $(G,L)$.}
\sta{\label{u_0deg4done} If algorithm \textbf{\textsf{A}} terminates at step \ref{s10}, then $(G^*,L^*)$ represents $(G,L)$.}

The `if condition' of step \ref{s10} is satisfied, that is $|N_{G^L}(u_0)|\geq 4$. Also, since the algorithm \textbf{\textsf{A}} does not terminate at step \ref{s3}, the `if condition' of step \ref{s3} does not hold. In particular, we have $|N_{G^L}(u_0)|\leq 4$, and so $|N_{G^L}(u_0)|=4$. In addition, we have $G^*=G$, $L^*(a')=L(a')\setminus \{4\}$ for every $a\in A'$, $L^*(b')=L(b')\setminus \{5\}$ for every $b'\in B'$ and $L^*(v)=L(v)$ for every $v\in V(G)\setminus (A'\cup B')$.

By \eqref{getbullets}, all seven bullets of Lemma \ref{ABhomo} hold. In particular, by the first dash of the seventh bullet of Lemma \ref{ABhomo}, we have $A'\cap B'=\emptyset$ and $|A'|=|B'|=1$, say $A'=\{a'_0\}$ and $B'=\{b'_0\}$ for distinct $a'_0,b'_0$, and by the second bullet of Lemma \ref{ABhomo}, we have $L(a'_0)=L(b'_0)=\{4,5\}$. As a result, we have $4\in L^*(a'_0)\setminus L(a'_0)$, which in turn implies that $|L^*(a'_0)|<|L(a'_0)|$. Thus, $(G^*,L^*)$ satisfies (R\ref{r1}).

To see (R\ref{r2}), suppose that $G$ admits a frugal $L$-coloring $\phi$.
Since $|N_{G^L}(u_0)|=4$, by Lemma \ref{smallgooddeg}, we have $\phi(N_{G^L}[u_0])=[5]$. Thus, from $L(u_0)=\{1,2,3\}$ and the definition of $A$ and $B$, we deduce that there exists $a_0\in A$ with $\phi(a_0)=4$ and $b_0\in B$ with $\phi(b_0)=5$.  On the other hand, by the fifth bullet of Lemma \ref{ABhomo}, $a'_0$ is adjacent to $a_0$ and $b'_0$ is adjacent to $b_0$ in $G^L$ (and so in $G$). Therefore, we have  $\phi(a'_0)\in L(a'_0)\setminus \{\phi(a_0)\}=L(a'_0)\setminus \{4\}=L^*(a'_0)$ and $\phi(b'_0)\in L(b'_0)\setminus \{\phi(b_0)\}=L(b'_0)\setminus \{5\}=L^*(b'_0)$. In other words, for every $v\in \{a'_0,b'_0\}=A'\cup B'$, we have $\phi(v)\in L^*(v)$. Moreover, for every $v\in V(G^*)\setminus (A'\cup B')$, we have $\phi(v)\in L(v)=L^*(v)$. Hence, $\phi$ being a frugal $L$-coloring of $G$, by Lemma \ref{frugalspread}, $\phi|_{V(G^*)}$ is a frugal $L^*$-coloring of $G^*$, and so  and so $(G^*,L^*)$ satisfies (R\ref{r2}). Finally, note that $(G^*,L^*)$ is spanning $(G,L)$-refinement, and so by Lemma \ref{spanspread}, $(G^*,L^*)$ satisfies (R\ref{r3}). This proves \eqref{u_0deg4done}.\vsp

From \eqref{geq5done}-\eqref{u_0deg4done}, we deduce:

\sta{\label{finalstep} The output $(G^*,L^*)$ of algorithm \textbf{\textsf{A}} represents  $(G,L)$.}
If algorithm \textbf{\textsf{A}} terminates at one of the steps \ref{s3}-\ref{s10}, then by \eqref{geq5done}-\eqref{u_0deg4done}, we are done. Therefore, we may assume that algorithm \textbf{\textsf{A}} stops at step \ref{s11}. Since algorithm \textbf{\textsf{A}} does not terminates at steps \ref{s3} and \ref{s10}, the `if condition' in these two steps is not satisfied, and so $3\leq |L(u_0)|\leq |N_{G^L}(u_0)|\leq 3$; that is, $|N_{G^L}(u_0)|=|L(u_0)|=3$ and $L(u_0)=\{1,2,3\}$. By \eqref{getbullets}, all seven bullets of Lemma \ref{ABhomo} hold.  In particular, by the first and the second dash of the seventh bullet of Lemma \ref{ABhomo}, we have $A'\cap B'=\emptyset$ and $|A'|=|B'|=1$, say $A'=\{a'_0\}$ and $B'=\{b'_0\}$ for distinct $a'_0,b'_0$, $A,B\neq \emptyset$ and $A\cap B=\emptyset$. Also, by the fifth bullet and the third dash of the second bullet of Lemma \ref{ABhomo}, $a'_0$ is complete to $A$ in $G^L$ (and so in $G$)  and anticomplete to $B$ in $G$ (and so in $G^L$), and $b'_0$ is complete to $B$ in $G^L$ (and so in $G$)  and anticomplete to $A$ in $G$ (and so in $G^L$). Moreover, by the second bullet of Lemma \ref{ABhomo}, we have $L(a'_0)=L(b'_0)=\{4,5\}$.

Let $M$, $i$ and $j$ be as in step \ref{s11} of algorithm \textbf{\textsf{A}}. Then we have $~{G^*=G|(\{u_0,a,b\}\cup (V(G)\setminus N_{G^L}[u_0]))}$, $L^*(u_0)=M$, $L^*(a)=\{i,4\}$, $L^*(b)=\{j,5\}$ and $L^*(v)=L(v)$ for every $v\in V(G^*)\setminus \{u_0,a,b\}$. Also, by the minimality of $M$, we have $M=\{i,j\}\subseteq \{1,2,3\}$. 

To verify validity of (R\ref{r1}), note that from $|N_{G^L}(u_0)|=3$, one may deduce $~|{V(G^*)|=3+|V(G)\setminus N_{G^L}[u_0]|<4+|V(G)\setminus N_{G^L}[u_0]|\leq |V(G)|}$. So $(G^*,L^*)$ satisfies (R\ref{r1}).

For (R\ref{r2}), suppose that $G$ admits a frugal $L$-coloring $\phi$. From $L(u_0)=\{1,2,3\}$, $|N_{G^L}(u_0)|=3$ and Lemma \ref{smallgooddeg}, we observe that either there exists $a_0\in A$ with $\phi(a_0)=4$ or there exists $b_0\in B$ with $\phi(b_0)=5$. On the other hand, by the fifth bullet of Lemma \ref{ABhomo}, $a'_0$ is complete to $A$ in $G^L$ (and so in $G$) and $b'_0$ is complete to $B$ in $G^L$ (and so in $G$). Consequently, since $\phi$ is proper, either $\phi(a'_0)=5$ or $\phi(b'_0)=4$. In the former case, let $\psi(a)=4$, $\psi(b)=j$, $\psi(u_0)=i$ and $\psi(v)=\phi(v)$ for every $v\in V(G^*)\setminus \{u_0,a,b\}=V(G)\setminus N_{G^L}[u_0]$. In the latter case, let $\psi(a)=i$, $\psi(b)=5$, $\psi(u_0)=j$ and again $\psi(v)=\phi(v)$ for every $v\in V(G^*)\setminus \{u_0,a,b\}=V(G)\setminus N_{G^L}[u_0]$. We leave the reader to check that, from $\phi$ being a frugal $L$-coloring of $G$, it follows that $\psi$ is a frugal $L^*$-coloring of $G^*$. This verifies (R\ref{r2}) for $(G^*,L^*)$.

It remains to argue the truth of (R\ref{r3}) for $(G^*,L^*)$. Suppose $\psi$ is an $L^*$-coloring of $G^*$.  Due to $|N_{G^L}(u_0)|=3$, let $N_{G^L}(u_0)\setminus \{a,b\}=\{c\}$. By the first bullet of Lemma \ref{ABhomo}, we have $|L(a)|,|L(b)|,|L(c)|\geq 2$. Note that either $\psi(a'_0)=5$ or $\psi(b'_0)=4$, for otherwise from $\psi(a'_0)=4$ and $\psi(b'_0)=5$, it follows that $\psi(a)=i$, $\psi(b)=j$, and so $\psi(u_0)\in M=\{i,j\}= \{\psi(a),\psi(b)\}$, which contradicts $\psi$ being proper. We deduce (R\ref{r3}) for cases $\psi(a'_0)=\psi(b'_0)$ and $\psi(a'_0)\neq \psi(b'_0)$ separately, below.

First, suppose that $\psi(a'_0)=\psi(b'_0)$, and by symmetry, let $\psi(a'_0)=\psi(b'_0)=4$. We define a coloring $\phi$ of $G$ as follows. Let $\phi(b)=5$ and $\phi(v)=\psi(v) $ for every $v\in V(G)\setminus N_{G^L}[u_0]$. In order to determine $\phi(a)$ and $\phi(c)$, we need to consider two cases. If $c\in A$, then from $A\cap B=\emptyset$, we have $5\notin L(a)\cup L(c)$, and so we may choose two distinct colors $k$ and $l$ with $~{k\in L(a)\setminus \{4,5\}=L(a)\setminus \{\phi(a'_0),\phi(b'_0),\phi(b)\}}$ and $l\in L(c)\setminus \{4\}=L(c)\setminus \{\phi(a'_0),\phi(b'_0),\phi(b)\}$, since otherwise $L(a)=L(c)$ and $|L(a)|=|L(c)|=2$, and so algorithm \textbf{\textsf{A}} should have terminated at step \ref{s8}, a contradiction. Otherwise, if $c\notin A$, the we have $4\notin L(c)$. So since $|L(a)|\geq 2$, there exists $k\in L(a)\setminus \{4,5\}=L(a)\setminus \{\phi(a'_0),\phi(b'_0),\phi(b)\}$, and since $|L(c)|\geq 2$, there exists $l\in L(c)\setminus \{k,5\}=L(c)\setminus \{k,4,5\}=L(c)\setminus \{k,\phi(a'_0),\phi(b'_0),\phi(b)\}$, as otherwise $c\in B$ and $L(a)\cap L(c)\neq \emptyset$, which violates the fourth dash of the seventh bullet of Lemma \ref{ABhomo}. We define $\phi(a)=k$ and $\phi(c)=l$. Eventually, we choose $\phi(u_0)\in L(u_0)
\setminus \{k,l,4\}=\{1,2,3\}\setminus \{k,l\}$. We leave it to the reader to check that since $\psi$ is and $L^*$-coloring of $G^*$, $\phi$ is an $L$-coloring of $G$, and so the third bullet of \eqref{finalstep} follows. Note that the argument for the case $\psi(a'_0)=\psi(b'_0)=5$ is analogous, with an additional caveat that this time we rely on the fact that algorithm \textbf{\textsf{A}} does not terminate at step \ref{s9}, instead.

Next, suppose that $\psi(a'_0)\neq \psi(b'_0)$; that is, $\psi(a'_0)=5$ and $\psi(b'_0)=4$. We define a coloring $\phi$ of $G$ as follows. Let $\phi(a)=4$, $\phi(b)=5$, $\phi(v)=\psi(v) $ for every $v\in V(G)\setminus N_{G^L}[u_0]$. Also since $A\cap B=\emptyset$, either $4\notin L(c)$ or $5\notin L(c)$, and from $|L(c)|\geq 2$, there exists $k\in L(c)\setminus \{4,5\}=L(c)\setminus \{\phi(a),\phi(b),\psi(a'_0),\psi(b'_0)\}$, and we set $\phi(c)=k$. Finally, we choose $\phi(u_0)\in L(u_0)
\setminus \{k,4,5\}=\{1,2,3\}\setminus \{k\}$. Then it is easy to check that since $\psi$ is an $L^*$-coloring of $G^*$, $\phi$ is an $L$-coloring of $G$, and so $(G^*,L^*)$ satisfies (R\ref{r3}). This proves \eqref{finalstep}.

\sta{\label{23kill1} There exists a $(G^*,L^*)$-refinement $(\tilde{G},\tilde{L})$ with the following specifications.	
	\begin{itemize}
			\item $(\tilde{G},\tilde{L})$ can be computed from $(G^*,L^*)$ is time $\mathcal{O}(|V(G^*)|^2)$.
			\item We have $|\tilde{L}(v)|\neq 1$ for all $v\in V(\tilde{G})$.
		\item If $G^*$ admits a frugal $L^*$-coloring, then $\tilde{G}$ admits a frugal $\tilde{L}$-coloring. 
		\item If $\tilde{G}$ admits an $\tilde{L}$-coloring, then $G^*$ admits an $L^*$-coloring.
\end{itemize}}
We may apply Theorem \ref{kill1} to $(G^*,L^*)$, obtaining a $(G^*,L^*)$-refinement $(\hat{G^*},\hat{L^*})$ satisfying the bullet conditions of Theorem \ref{kill1}. Then, defining $\tilde{G}=\hat{G^*}$ and $\tilde{L}=\hat{L^*}$,  it follows that $(\tilde{G},\tilde{L})$ satisfies the bullet conditions of  \eqref{23kill1}. This proves \eqref{23kill1}.\vsp

To conclude the proof, let $(\tilde{G},\tilde{L})$ be as in \eqref{23kill1}. We show that $(\tilde{G},\tilde{L})$ satisfies Theorem \ref{23}.

By \eqref{23runtime}, algorithm \textbf{\textsf{A}} computes $(G^*,L^*)$ from $(G,L)$ in time $\mathcal{O}(|V(G)|^2)$. Also, by the first bullet of \eqref{23kill1}, $(\tilde{G},\tilde{L})$ can be computed from $(G^*,L^*)$ in time $\mathcal{O}(|V(G^*)|^2)=\mathcal{O}(|V(G)|^2)$. So $(\tilde{G},\tilde{L})$ can be computed from $(G,L)$ in time $\mathcal{O}(|V(G)|^2)$; that is, $(\tilde{G},\tilde{L})$ satisfies the first bullet of Theorem \ref{23}.

For the second bullet of Theorem \ref{23}, we argue the validity of (R\ref{r1}), (R\ref{r2}) and (R\ref{r3}) for $(\tilde{G},\tilde{L})$ separately. By \eqref{finalstep}, $(G^*,L^*)$ satisfies (R\ref{r1}), and so being a $(G^*,L^*)$-refinement, it follows that $(\tilde{G},\tilde{L})$ satisfies (R\ref{r1}), as well.

For (R\ref{r2}) suppose that $G$ admits a frugal $L$-coloring. Then by \eqref{finalstep}, $(G^*,L^*)$ satisfies (R\ref{r2}), and so $G^*$ admits a frugal $L^*$-coloring. Therefore, by the third bullet of \eqref{23kill1}, $\tilde{G}$ admits a frugal $\tilde{L}$-coloring, and so $(\tilde{G},\tilde{L})$ satisfies (R\ref{r2}). 

Finally, for (R\ref{r3}), suppose that $\tilde{G}$ admits an $\tilde{L}$-coloring. Then by the fourth bullet of \eqref{23kill1}, $G^*$ admits an $L^*$-coloring. Also, by \eqref{finalstep}, $(G^*,L^*)$ satisfies (R\ref{r3}), and so $G$ admits an $L$-coloring. Hence, $(\tilde{G},\tilde{L})$ satisfies (R\ref{r3}). This completes the proof.
\end{proof}

\section{Proof of Theorem \ref{thm:main}}\label{mainsec}

In this section, we combine Theorems \ref{frugality}, \ref{goodp3theorem} and \ref{23} to deduce Theorem \ref{thm:main}. First, Theorems \ref{frugality} and \ref{goodp3theorem} are applied to deduce the following.
\begin{theorem}\label{allfromfrug+good}
	For all fixed $k,r\in \mathbb{N}$, there exists $\eta(k,r)\in \mathbb{N}$ with the following property. Let $(G,L)$ be an instance of the \textsc{List-$k$-Coloring Problem} where $G$ is $rP_3$-free graph. Then there exists a $(G,L)$-profile $\Xi(G,L)$ with the following specifications.
	\begin{itemize}
		\item\sloppy $|\Xi(G,L)|\leq \mathcal{O}\left(|V(G)|^{\eta(k,r)}\right)$ and $\Xi(G,L)$ can be computed from $(G,L)$ in time $ \mathcal{O}\left(|V(G)|^{\eta(k,r)}\right)$.
		\item For every $(G',L')\in \Xi(G,L)$ and every $v\in V(G')$, we have $|L'(v)|\neq 1$.
		\item For every $(G',L')\in \Xi(G,L)$, $G'$ has no $L'$-good $P_3$.
		\item If $G$ admits an $L$-coloring, then for some $(G',L')\in \Xi(G,L)$, $G'$ admits a frugal $L'$-coloring. 
		\item If $G'$ admits an $L'$-coloring for some $(G',L')\in \Xi(G,L)$, then $G$ admits an $L$-coloring. 
	\end{itemize}
\end{theorem}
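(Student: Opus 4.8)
The plan is to compose the three reductions already in hand---Theorem \ref{frugality} (frugality), Theorem \ref{goodp3theorem} (good $P_3$ elimination), and Theorem \ref{kill1} (killing singleton lists)---in that order, taking a union of the resulting refinements at each stage. Concretely, I would first apply Theorem \ref{frugality} to $(G,L)$ to obtain a spanning $(G,L)$-profile $\Pi(G,L)$. Then, for each $(G,L')\in\Pi(G,L)$, I would apply Theorem \ref{goodp3theorem} to $(G,L')$ (note that $G$ is still $rP_3$-free and the refinements stay spanning) to obtain a spanning $(G,L')$-profile $\Upsilon(G,L')$. Finally, for each $(G,L'')\in\bigcup_{(G,L')\in\Pi(G,L)}\Upsilon(G,L')$, I would apply Theorem \ref{kill1} to $(G,L'')$ to obtain a $(G,L'')$-refinement $(\hat G,\hat L)$ with no singleton lists, and set $\Xi(G,L)$ to be the collection of all such $(\hat G,\hat L)$.

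The polynomial size and running-time bounds follow by multiplying the three bounds, so one may take $\eta(k,r)=\pi(k,r)+\upsilon(k,r)+2$. The no-singleton property (second bullet) is exactly the second bullet of Theorem \ref{kill1}. For the no-good-$P_3$ property (third bullet), Theorem \ref{goodp3theorem} guarantees each intermediate $(G,L'')$ has no $L''$-good $P_3$, and since $(\hat G,\hat L)$ is a $(G,L'')$-refinement, Lemma \ref{goodspread} transfers this to $(\hat G,\hat L)$.

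The heart of the verification is keeping the two directions straight, since the forward (\emph{completeness}) direction travels through the profiles via frugal colorings while the backward (\emph{soundness}) direction travels back via ordinary colorings. For the fourth bullet, assume $G$ admits an $L$-coloring; then Theorem \ref{frugality} yields some $(G,L')\in\Pi(G,L)$ with a frugal $L'$-coloring of $G$, Theorem \ref{goodp3theorem} yields some $(G,L'')\in\Upsilon(G,L')$ with a frugal $L''$-coloring of $G$, and the third bullet of Theorem \ref{kill1} yields a frugal $\hat L$-coloring of the corresponding $\hat G\in\Xi(G,L)$. For the fifth bullet, suppose some $(\hat G,\hat L)\in\Xi(G,L)$ admits an $\hat L$-coloring; the fourth bullet of Theorem \ref{kill1} lifts this to an $L''$-coloring of $G$, and since $(G,L'')$ and $(G,L')$ are \emph{spanning} refinements of $(G,L')$ and $(G,L)$ respectively, two applications of Lemma \ref{spanspread} lift it to an $L'$-coloring and then to an $L$-coloring of $G$.

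The only real subtlety---and hence the step I expect to require the most care---is precisely this interplay: the intermediate instances must remain spanning so that Lemma \ref{spanspread} is available for soundness, while \emph{frugality} (rather than mere colorability) must be tracked along the forward direction, because that is exactly what Theorem \ref{goodp3theorem} and the third bullet of Theorem \ref{kill1} consume and produce. Beyond this bookkeeping I anticipate no combinatorial difficulty, as every individual implication is supplied by a previously established result.
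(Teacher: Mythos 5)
Your proposal is correct and is essentially the paper's own proof: both compose Theorem \ref{frugality}, Theorem \ref{goodp3theorem} (applicable because the intermediate refinements are spanning, so $G$ stays $rP_3$-free), and Theorem \ref{kill1} in that order, obtain the third bullet from Theorem \ref{goodp3theorem} plus Lemma \ref{goodspread}, track frugality forward through the second/third bullets of Theorems \ref{frugality}, \ref{goodp3theorem}, \ref{kill1} for the fourth bullet, and go backward via the fourth bullet of Theorem \ref{kill1} and Lemma \ref{spanspread} for the fifth. Your exponent $\pi(k,r)+\upsilon(k,r)+2$ is, if anything, a slightly more careful accounting of the final $\mathcal{O}(|V(G)|^2)$-per-instance cleanup step than the paper's choice of $\pi(k,r)+\upsilon(k,r)$.
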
 
\begin{proof}
	Applying Theorem \ref{frugality} to $(G,L)$, we obtain a spanning $(G,L)$-profile $\Pi(G,L)$ satisfying the bullet conditions of Theorem \ref{frugality}. Also, for every $(G,K)\in \Pi(G,L)$, applying Theorem \ref{goodp3theorem} to $(G,K)$, we obtain a spanning $(G,K)$-profile $\Upsilon(G,L)$ satisfying the bullet conditions of Theorem \ref{goodp3theorem}. Let $\Theta(G,L)=\bigcup_{(G,K)\in \Pi(G,L)}\Upsilon(G,K)$. Then, for every $(G,J)\in \Theta(G,L)$, we may apply Theorem \ref{kill1} to $(G,J)$, obtaining a $(G,J)$-refinement $(\hat{G},\hat{J})$ satisfying bullet conditions of Theorem \ref{kill1}.
	
	Let $\Xi(G,L)=\{(\hat{G},\hat{J}): (G,J)\in \Theta(G,L)\}$. We claim that $\Xi(G,L)$ satisfies Theorem \ref{allfromfrug+good}. Clearly, $\Xi(G,L)$ is a $(G,L)$-profile. Also, let $\pi(k,r)$ be as in Theorem \ref{frugality} and $\upsilon(k,r)$ be as in Theorem \ref{goodp3theorem}. Now, assuming $\eta(k,r)=\pi(k,r)+\upsilon(k,r)+2$, by the first bullet of Theorems \ref{frugality} and \ref{goodp3theorem} and \ref{kill1}, $\Xi(G,L)$ satisfies the first bullet of Theorem \ref{allfromfrug+good}. Also, by the second bullet of Theorem \ref{kill1}, $\Xi(G,L)$ satisfies the second bullet of Theorem \ref{allfromfrug+good}. Moreover, the second bullet of Theorem \ref{goodp3theorem} along with Lemma \ref{goodspread} implies that $\Xi(G,L)$ satisfies the third bullet of Theorem \ref{allfromfrug+good}. The fourth bullet of Theorem \ref{allfromfrug+good} for $\Xi(G,L)$ follows from the second bullet of Theorem \ref{frugality} and the third bullets of Theorems \ref{goodp3theorem} and \ref{kill1}. Finally, by Lemma \ref{spanspread} and the fourth bullet of Theorem \ref{kill1}, $\Xi(G,L)$ satisfies the fifth bullet of Theorem \ref{allfromfrug+good}. This completes the proof.
\end{proof}

\sloppy Next, we prove the following as an application of Theorem \ref{23}. Recall the definition $~{p(G,L)=|V(G)|+\sum_{v\in V(G)}|L(v)|}$ for every instance $(G,L)$ of the \textsc{List-$k$-Coloring Problem}, $k\in \mathbb{N}$.

\begin{theorem}\label{23rep}
	Let  $(G,L)$ be an instance of the \textsc{List-$5$-Coloring Problem} such that $|L(v)|\neq 1$ for all $v\in V(G)$ and $G$ has no $L$-good $P_3$. Then there exists a $(G,L)$-refinement $(G^{\flat},L^{\flat})$ with the following specifications. 
	\begin{itemize}
		\item $(G^{\flat},L^{\flat})$ can be computed from $(G,L)$ in time $\mathcal{O}(p(G,L)|V(G)|^2)=\mathcal{O}(|V(G)|^3)$.
		\item $|L^{\flat}(v)|\in \{0,2\}$ for all $v\in V(G^{\flat})$.
		\item If $G$ admits a frugal $L$-coloring, then $G^{\flat}$ admits a frugal $L^{\flat}$-coloring. 
		\item If $G^{\flat}$ admits an $L^{\flat}$-coloring, then $G$ admits an $L$-coloring.
	\end{itemize}
\end{theorem}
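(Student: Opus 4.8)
The plan is to reduce to Theorem \ref{23} by \emph{iterating} it until no vertex has a list of size three or more. I would set $(G_0,L_0)=(G,L)$ and maintain the loop invariant that the current instance $(G_i,L_i)$ satisfies $|L_i(v)|\neq 1$ for all $v\in V(G_i)$ and that $G_i$ has no $L_i$-good $P_3$. As long as the current instance has a vertex $u_0$ with $|L_i(u_0)|\geq 3$, I would apply Theorem \ref{23} to $(G_i,L_i)$ and $u_0$, obtaining a $(G_i,L_i)$-refinement $(G_{i+1},L_{i+1})=(\tilde{G},\tilde{L})$ that represents $(G_i,L_i)$ and satisfies $|L_{i+1}(v)|\neq 1$ for all $v\in V(G_{i+1})$. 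The invariant is preserved: the condition on list-sizes is part of the conclusion of Theorem \ref{23}, and $G_{i+1}$ has no $L_{i+1}$-good $P_3$ because $(G_{i+1},L_{i+1})$ is a $(G_i,L_i)$-refinement, so Lemma \ref{goodspread} applies. When the loop halts, the instance $(G^\flat,L^\flat)$ is the final $(G_i,L_i)$: by the invariant no list has size one, and by the halting condition no list has size three or more, so every list has size in $\{0,2\}$, giving the second bullet.

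For termination and the two feasibility bullets, the key is that each application of Theorem \ref{23} yields \eqref{r1}, i.e.\ $p(G_{i+1},L_{i+1})<p(G_i,L_i)$. Since $p$ is a nonnegative integer and $p(G,L)\leq 6|V(G)|$, the process halts after at most $p(G,L)=\mathcal{O}(|V(G)|)$ iterations. Composing refinements (induced subgraph of an induced subgraph, nested lists) shows that $(G^\flat,L^\flat)$ is a $(G,L)$-refinement. The third and fourth bullets then follow by chaining the ``represents'' relation along the sequence $(G_0,L_0),\ldots,(G^\flat,L^\flat)$: a frugal $L$-coloring of $G$ propagates forward by repeated use of \eqref{r2} to a frugal $L^\flat$-coloring of $G^\flat$, while an $L^\flat$-coloring of $G^\flat$ propagates backward by repeated use of \eqref{r3} to an $L$-coloring of $G$.

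For the running time, each iteration scans the vertices to locate a vertex with list-size at least three (or certify none exists) in time $\mathcal{O}(|V(G)|)$, and then applies Theorem \ref{23} in time $\mathcal{O}(|V(G_i)|^2)=\mathcal{O}(|V(G)|^2)$, using that $G_i$ is an induced subgraph of $G$. Multiplying by the $\mathcal{O}(p(G,L))$ iterations gives a total of $\mathcal{O}(p(G,L)|V(G)|^2)=\mathcal{O}(|V(G)|^3)$, as required. I do not expect a genuine combinatorial obstacle here: all the structural work has already been carried out inside Theorem \ref{23}, and the only points demanding care are the bookkeeping ones, namely verifying that the hypotheses of Theorem \ref{23} persist under refinement (handled by Lemma \ref{goodspread} together with the list-size conclusion of Theorem \ref{23}) and that the ``represents'' relation composes correctly along the chain.
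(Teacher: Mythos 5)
Your proposal is correct and is essentially the paper's own argument: the paper proves this by taking a counterexample minimizing $p(G,L)$ and applying Theorem \ref{23} once before invoking minimality, which is precisely your iteration (with the strict decrease of $p$ under \eqref{r1} ensuring termination, Lemma \ref{goodspread} plus the list-size conclusion of Theorem \ref{23} preserving the hypotheses, and the bullets obtained by composing refinements and the ``represents'' relation along the chain). The only difference is presentational — explicit loop with invariant versus induction on $p(G,L)$ — and your accounting of the $\mathcal{O}(p(G,L))$ iterations, each costing $\mathcal{O}(|V(G)|^2)$, matches the stated $\mathcal{O}(p(G,L)|V(G)|^2)=\mathcal{O}(|V(G)|^3)$ bound.
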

\begin{proof}
	Let $(G,L)$ be a counterexample with $p(G,L)$ as small as possible. If $|L(u)|\in \{0,2\}$ for every $u\in V(G)$, then we define $G^{\flat}=G$, $L^{\flat}=L$, and it is immediately seen that $(G^{\flat},L^{\flat})$ satisfies the bullet conditions of Theorem \ref{23rep}, a contradiction.  So we may assume that there exists a vertex $u_0\in V(G)$ with $|L(u_0)|\geq 3$. Applying Theorem \ref{23} to $(G,L)$ and $u_0$, we obtain a $(G,L)$-refinement $(\tilde{G},\tilde{L})$, satisfying the bullet conditions of Theorem \ref{23}. In particular, by the second bullet of Theorem \ref{23}, we have $|\tilde{L}(v)|\neq 1$ for all $v\in V(\tilde{G})$. Also, since $G$ has no $L$-good $P_3$, by Lemma \ref{goodspread}, $\tilde{G}$ has no $\tilde{L}$-good $P_3$. Moreover, by the third bullet of Theorem \ref{23}, $(\tilde{G},\tilde{L})$ satisfies (R\ref{r1}); that is, $p(\tilde{G},\tilde{L})<p(G,L)$. This, together with the minimality of $p(G,L)$, implies that there exists a $(\tilde{G},\tilde{L})$-refinement $(\tilde{G}^{\flat},\tilde{L}^{\flat})$, satisfying the bullet conditions of Theorem \ref{23rep}. Now, let $G^{\flat}=\tilde{G}^{\flat}$ and  $L^{\flat}=\tilde{L}^{\flat}$. Since $(\tilde{G}^{\flat},\tilde{L}^{\flat})$ is a   $(\tilde{G},\tilde{L})$-refinement and $(\tilde{G},\tilde{L})$ is a $(G,L)$-refinement, it follows that $(G^{\flat},L^{\flat})=(\tilde{G}^{\flat},\tilde{L}^{\flat})$ is a $(G,L)$-refinement. Moreover, since $(G^{\flat},L^{\flat})=(\tilde{G}^{\flat},\tilde{L}^{\flat})$, it is easy to see that
	\begin{itemize}
	    \item[-] the first bullet of Theorem \ref{23rep} for $(G,L)$ and $(G^{\flat},L^{\flat})$ follows from the first bullet of Theorem \ref{23} for $(G,L)$ and $(\tilde{G},\tilde{L})$ and the first bullet of Theorem \ref{23rep} for $(\tilde{G},\tilde{L})$ and $(\tilde{G}^{\flat},\tilde{L}^{\flat})$;
	    \item[-] the second bullet of Theorem \ref{23rep} for $(G^{\flat},L^{\flat})$ follows from the second bullet of Theorem \ref{23rep} for $(\tilde{G}^{\flat},\tilde{L}^{\flat})$;
	    \item[-] the third bullet of Theorem \ref{23rep} for $(G,L)$ and $(G^{\flat},L^{\flat})$ follows from the third bullet of Theorem \ref{23} (in particular, (R\ref{r2})) for $(G,L)$ and $(\tilde{G},\tilde{L})$ and the third bullet of Theorem \ref{23rep} for $(\tilde{G},\tilde{L})$ and $(\tilde{G}^{\flat},\tilde{L}^{\flat})$; and 
	    \item[-] the fourth bullet of Theorem \ref{23rep} for $(G,L)$ and $(G^{\flat},L^{\flat})$ follows from the third bullet of Theorem \ref{23} (in particular, (R\ref{r3})) for $(G,L)$ and $(\tilde{G},\tilde{L})$ and the fourth bullet of Theorem \ref{23rep} for $(\tilde{G},\tilde{L})$ and $(\tilde{G}^{\flat},\tilde{L}^{\flat})$.
	\end{itemize}
	But this violates $(G,L)$ being a counterexample to Theorem \ref{23rep}, and so completes the proof.
\end{proof}
As the last ingredient, we need the following, which is proved via a reduction to 2SAT, and has been discovered independently by many authors \cite{2sat1,2sat2,2sat3}.
\begin{theorem}[Edwards \cite{2sat1}]\label{2LC}
Let $k\in \mathbb{N}$ be fixed and  $(G,L)$ be an instance of the \textsc{List-$k$-Coloring Problem} with $|L(v)|\leq 2$ for every $v\in V(G)$. Then it can be decided in time $\mathcal{O}(|V(G)|^2)$ whether $G$ admits an $L$-coloring.
\end{theorem}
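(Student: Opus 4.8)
The plan is to reduce the decision to \textsc{2-Satisfiability} (2SAT), which is solvable in time linear in the size of the formula. First I would dispose of the degenerate lists: if $L(v)=\emptyset$ for some $v$, then $G$ has no $L$-coloring and we may answer negatively at once; and if $L(v)=\{c\}$ is a singleton, then the color of $v$ is forced. The only genuine binary choices occur at the vertices with $|L(v)|=2$.

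For each vertex $v$ with $L(v)=\{a_v,b_v\}$ I would introduce a Boolean variable $x_v$, reading $x_v=\mathsf{true}$ as $\phi(v)=a_v$ and $x_v=\mathsf{false}$ as $\phi(v)=b_v$. For a vertex $v$ and a color $c$, let $\ell_{v,c}$ denote the literal asserting $\phi(v)=c$: it is $x_v$ if $c=a_v$, it is $\neg x_v$ if $c=b_v$, the constant $\mathsf{true}$ if $L(v)=\{c\}$, and the constant $\mathsf{false}$ if $c\notin L(v)$. Properness then decomposes over edges: for every edge $uv\in E(G)$ and every shared color $c\in L(u)\cap L(v)$, a proper coloring must forbid $\phi(u)=\phi(v)=c$, which I would encode as the $2$-clause $\neg\ell_{u,c}\vee\neg\ell_{v,c}$. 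Collecting these clauses yields a 2SAT instance $F$ once the constant literals are propagated away (a clause containing a $\mathsf{true}$ literal is discarded, and a clause containing a $\mathsf{false}$ literal reduces to its other literal, or to a contradiction if both are $\mathsf{false}$). By construction, the assignments satisfying $F$ are exactly the ways of choosing $\phi(v)\in L(v)$ at the size-$2$ vertices which, together with the forced colors at the singleton vertices, properly color $G$; hence $F$ is satisfiable if and only if $G$ admits an $L$-coloring.

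It then remains to bound the running time. The formula $F$ has at most $|V(G)|$ variables and, since each edge contributes at most $|L(u)\cap L(v)|\le 2$ clauses, at most $2|E(G)|=O(|V(G)|^2)$ clauses, so it can be built in time $O(|V(G)|^2)$. Applying the standard linear-time 2SAT algorithm (testing, via the strongly connected components of the implication graph, that no variable and its negation lie in a common component) decides the satisfiability of $F$ in time linear in its size, i.e. $O(|V(G)|^2)$. Combining the two phases gives the claimed $\mathcal{O}(|V(G)|^2)$ bound.

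I do not expect a genuine mathematical obstacle here; the whole content lies in the faithfulness of the encoding. The single point requiring care is polarity bookkeeping: the same color $c$ may be the ``$\mathsf{true}$-color'' $a_u$ for one endpoint of an edge and the ``$\mathsf{false}$-color'' $b_v$ for the other, so $\ell_{u,c}$ and $\ell_{v,c}$ can carry opposite signs, and singleton or empty lists must be threaded through as constant literals rather than as fresh variables. Once the literals $\ell_{v,c}$ are defined uniformly as above, all these cases are handled automatically, and the bijection between $L$-colorings of $G$ and satisfying assignments of $F$ is immediate.
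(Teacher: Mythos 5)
Your proof is correct, and it follows exactly the route the paper indicates for this result: the paper does not prove Theorem \ref{2LC} itself but cites it (Edwards et al.) as a known theorem "proved via a reduction to 2SAT," which is precisely the reduction you carry out, with the polarity and constant-literal bookkeeping handled properly and the $\mathcal{O}(|V(G)|^2)$ bound following from the linear-time 2SAT algorithm applied to a formula of size $O(|E(G)|)$.
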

Now we are in a position to prove Theorem \ref{thm:main}, which we restate.
\begin{theorem}
Let $r\in \mathbb{N}$ be fixed. Then there exists a polynomial-time algorithm which solves the \textsc{List-$5$-Coloring Problem} restricted to $rP_3$-free instances.
\end{theorem}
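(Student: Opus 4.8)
The plan is to chain together the three reductions already established, so that the bulk of the work is borne by Theorems \ref{allfromfrug+good}, \ref{23rep} and \ref{2LC}. Given an $rP_3$-free instance $(G,L)$ of the \textsc{List-$5$-Coloring Problem}, I would first apply Theorem \ref{allfromfrug+good} with $k=5$ to compute, in time $\mathcal{O}(|V(G)|^{\eta(5,r)})$, a $(G,L)$-profile $\Xi(G,L)$ of size $\mathcal{O}(|V(G)|^{\eta(5,r)})$, every element $(G',L')$ of which satisfies $|L'(v)|\neq 1$ for all $v\in V(G')$ and has no $L'$-good $P_3$. Crucially, the fourth and fifth bullets of Theorem \ref{allfromfrug+good} guarantee that $G$ admits an $L$-coloring if and only if some $(G',L')\in\Xi(G,L)$ admits a \emph{frugal} $L'$-coloring: the ``only if'' direction produces a frugal coloring, while the ``if'' direction needs only an ordinary one.

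Next, for each $(G',L')\in\Xi(G,L)$, which by construction meets exactly the hypotheses of Theorem \ref{23rep}, I would apply Theorem \ref{23rep} to obtain in time $\mathcal{O}(|V(G')|^3)$ a $(G',L')$-refinement $(G'^{\flat},L'^{\flat})$ with $|L'^{\flat}(v)|\in\{0,2\}$ for all $v\in V(G'^{\flat})$. The third bullet of that theorem converts a frugal $L'$-coloring into a frugal (in particular, an) $L'^{\flat}$-coloring, and the fourth bullet converts any $L'^{\flat}$-coloring back into an $L'$-coloring. Since every list now has size at most $2$, I would then invoke Theorem \ref{2LC} to decide, in time $\mathcal{O}(|V(G'^{\flat})|^2)$, whether $G'^{\flat}$ admits an $L'^{\flat}$-coloring. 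The algorithm accepts precisely when at least one $(G'^{\flat},L'^{\flat})$ is $L'^{\flat}$-colorable.

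Correctness follows by tracing the two directions of the equivalences through the chain. If $G$ has an $L$-coloring, then Theorem \ref{allfromfrug+good} yields some $(G',L')$ with a frugal $L'$-coloring, Theorem \ref{23rep} promotes this to an $L'^{\flat}$-coloring of $G'^{\flat}$, and Theorem \ref{2LC} detects it. Conversely, an $L'^{\flat}$-coloring of some $G'^{\flat}$ lifts to an $L'$-coloring of $G'$ by the fourth bullet of Theorem \ref{23rep}, and thence to an $L$-coloring of $G$ by the fifth bullet of Theorem \ref{allfromfrug+good}. For the running time, there are $\mathcal{O}(|V(G)|^{\eta(5,r)})$ profile elements, and for each we spend $\mathcal{O}(|V(G)|^3)$ time building $(G'^{\flat},L'^{\flat})$ and $\mathcal{O}(|V(G)|^2)$ time running the $2$-list algorithm, for a total that is polynomial in $|V(G)|$ with exponent depending only on $r$.

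Since every substantive step has been isolated into the preceding theorems, there is essentially no remaining obstacle. The only care required is bookkeeping the asymmetric implications: the forward direction of feasibility is carried by the existence of \emph{frugal} colorings, whereas the backward direction needs only ordinary colorings. One must therefore verify that the ``only if'' bullets of Theorems \ref{allfromfrug+good} and \ref{23rep} compose to produce a frugal coloring all the way down to an instance with lists of size at most two, while the ``if'' bullets compose in the reverse order to lift an ordinary coloring back up to $(G,L)$, so that the two chains meet correctly at the output of Theorem \ref{2LC}.
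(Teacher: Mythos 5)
Your proposal is correct and follows essentially the same route as the paper's own proof: it forms the profile $\Xi(G,L)$ via Theorem \ref{allfromfrug+good}, refines each element via Theorem \ref{23rep} to lists of size at most two, and decides each resulting instance with Theorem \ref{2LC}, with the same asymmetric correctness argument (frugal colorings propagated forward, ordinary colorings lifted backward) and the same polynomial runtime accounting. No gaps.
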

	\begin{proof}
	Given an $rP_3$-free instance $(G,L)$ of the \textsc{List-$5$-Coloring Problem}, let $\Xi(G,L)$  be as in Theorem \ref{allfromfrug+good}. Then, for every $(G',L')\in \Xi(G,L)$, by the seond bullet of Theorem \ref{allfromfrug+good}, $|L'(v)|\neq 1$ for all $v\in V(G)$, and by the third bullet of Theorem \ref{allfromfrug+good}, $G'$ has no $L$-good $P_3$. Therefore, we may apply Theorem \ref{23rep} to $(G',L')$ obtaining a $(G,L)$-refinement $(G'^{\flat},L'^{\flat})$ satisfying the bullet conditions of Theorem \ref{23rep}. Now, consider the $(G,L)$-profile $\Gamma(G,L)=\{(G'^{\flat},L'^{\flat}): (G',L')\in \Xi(G,L)\}$. For all $k,r\in \mathbb{N}$, let $\eta(k,r)$ be as in Theorem \ref{allfromfrug+good}. Then, statement \eqref{finalruntime} below follows immediately from the first bullet of Theorem \ref{allfromfrug+good} for $\Xi(G,L)$, and the first bullet of Theorem \ref{23rep} for every $(G'^{\flat},L'^{\flat})$, where $(G',L')\in \Xi(G,L)$.
	
	\sta{\label{finalruntime} $|\Gamma(G,L)|\leq \mathcal{O}(|V(G)|^{\eta(k,r)})$, and  $\Gamma(G,L)$ can be computed from $(G,L)$ in time $\mathcal{O}(|V(G)|^{\eta(k,r)+3})$.}
	
	Also, we deduce: 
	
	\sta{\label{ordtoord} $G$ admits an $L$-coloring if and only if there exists $(G'^{\flat},L'^{\flat})\in \Gamma(G,L)$ such that $G'^{\flat}$ admits an $L'^{\flat}$-coloring for some $(G',L')\in \Xi(G,L)$.}
	
	Suppose that $G$ admits an $L$-coloring. By the fourth bullet of Theorem \ref{allfromfrug+good}, for some $(G',L')\in \Xi(G,L)$, $G'$ admits a frugal $L'$-coloring. As a result, by the third bullet of Theorem \ref{23rep}, $(G'^{\flat},L'^{\flat})\in \Gamma(G,L)$ admits a frugal $L'^{\flat}$-coloring, and so an $L'^{\flat}$-coloring.
	
	Conversely, suppose that for some $(G',L')\in \Xi(G,L)$,  $G'^{\flat}$ admits an $L'^{\flat}$-coloring. Then by the fourth bullet of Theorem \ref{23rep}, $G'$ admits an $L'$-coloring. Therefore, by the fifth bullet of Theorem \ref{allfromfrug+good}, $G$ admits an $L$-coloring. This proves \eqref{ordtoord}.\vsp
	
	Now, the algorithm is as follows. First, we compute $\Gamma(G,L)$. By \eqref{finalruntime}, this is doable in time  $\mathcal{O}(|V(G)|^{\max\{\eta(k,r),3\}})$. Then, by the second bullet of Theorem \ref{23rep}, for each $(G'^{\flat},L'^{\flat})\in \Gamma(G,L)$, we have $|L'^{\flat}(v)|\in \{0,2\}$. Therefore, since $|\Gamma(G,L)|\leq \mathcal{O}(|V(G)|^{\eta(k,r)})$ by \eqref{finalruntime}, applying the algorithm from Theorem \ref{2LC}, we decide in polynomial time whether there exists $(G'^{\flat},L'^{\flat})\in \Gamma(G,L)$ such that $G'^{\flat}$ admits an $L'^{\flat}$-coloring. If the answer is yes, then by \eqref{ordtoord}, $G$ admits an $L$-coloring. If the answer is no, again by \eqref{ordtoord}, $G$ admits no $L$-coloring. This completes the proof.
	\end{proof}
\section{Proof of Theorem \ref{thm:hardness}} \label{sec:hardness}
In this section, we prove Theorem \ref{thm:hardness} via a reduction from \textit{monotone} \textsc{NAE3SAT}, defined as follows. The \textsc{Not-All-Equal-3-Satisfiability Problem (NAE3SAT)} is to decide, given an instance $I$ consisting of $n$ Boolean variables $x_1,\ldots,x_n$ and $m$ clauses $C_1,\ldots, C_m$, each containing three literals, whether there exists a true/false assignment for each variable such that each clause contains at least one true literal and one false literal. We say $I$ is \textit{satisfiable} if it admits such an assignment. By \emph{monotone} \textsc{NAE3SAT}, we mean \textsc{NAE3SAT} restricted to \textit{monotone} instances; that is, instances with no negated literals.

\begin{theorem}[Garey and Johnson \cite{NAE-NP}]\label{NAENP}
	Monotone \textsc{NAE3SAT} is \textsf{NP}-complete.
\end{theorem}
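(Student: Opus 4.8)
The plan is to first dispatch membership in \textsf{NP} and then prove \textsf{NP}-hardness by a polynomial reduction from \textsc{3SAT}, which is \textsf{NP}-complete by Cook--Levin. Membership is immediate: given a candidate truth assignment, one checks in linear time that every clause contains at least one true and at least one false literal, so a ``yes'' instance has a polynomially verifiable certificate. The crux is therefore the reduction, which I would carry out in two stages, the first producing a (not necessarily monotone) \textsc{NAE3SAT} instance and the second eliminating negations.

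For the first stage, I would exploit the defining symmetry of not-all-equal satisfaction: an assignment satisfies an \textsc{NAE}-instance if and only if its pointwise complement does. Starting from a \textsc{3SAT} formula $\phi$ with variables $x_1,\dots,x_n$ and clauses $C_1,\dots,C_m$, I introduce one global variable $s$ and, for each clause $C_j=(\ell_1\vee\ell_2\vee\ell_3)$, a fresh variable $a_j$, and I replace $C_j$ by the two \textsc{NAE} clauses $\{\ell_1,\ell_2,a_j\}$ and $\{\neg a_j,\ell_3,s\}$. Using the complementation symmetry I may assume that $s$ is false, and then a short case analysis on the value of $\ell_3$ shows that the pair of clauses admits a suitable choice of $a_j$ precisely when $\ell_1\vee\ell_2\vee\ell_3$ holds. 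Since distinct clause gadgets share only $s$ and the original variables, the resulting \textsc{NAE3SAT} instance is \textsc{NAE}-satisfiable if and only if $\phi$ is satisfiable.

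The second stage makes the instance monotone. For every variable $v$ that still occurs negated, I introduce a fresh positive variable $v'$ meant to represent $\neg v$, replace each literal $\neg v$ by $v'$, and append a linking gadget forcing $v\neq v'$; the clause $\{v,v,v'\}$ does exactly this, since it is not-all-equal precisely when $v\neq v'$ (if one insists that the three literals of a clause be distinct, a fixed-size monotone gadget on a couple of auxiliary variables achieves the same effect). After this substitution every clause is monotone, and reading $v'$ as $\neg v$ gives a value-preserving correspondence between monotone assignments of the new instance and assignments of the stage-one instance, so the monotone \textsc{NAE3SAT} instance is satisfiable if and only if $\phi$ is. Both stages run in polynomial time and increase the size only by a constant factor, completing the reduction.

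I expect the main obstacle to be the bookkeeping around the not-all-equal symmetry: one must verify carefully that fixing $s$ to be false loses no generality, and that the linking gadgets pin $v'=\neg v$ without introducing spurious solutions, since a careless gadget can let an auxiliary variable ``absorb'' a violation and thereby fail to force the intended inequality. Keeping every clause to exactly three literals while remaining monotone is the delicate point, and it is precisely what the $\{v,v,v'\}$ linking clause (or its distinct-literal replacement) is designed to handle.
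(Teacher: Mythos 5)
The paper offers no proof of this statement at all---it is imported verbatim from Garey and Johnson as a known hardness result---so there is nothing internal to compare against; your proposal must stand or fall on its own, and it essentially stands. Membership in \textsf{NP} is trivial, your stage-one reduction is the standard Schaefer-style one, and its correctness checks out: since the set of \textsc{NAE}-satisfying assignments is closed under pointwise complementation, fixing $s$ to false loses no generality, and with $s$ false the pair $\{\ell_1,\ell_2,a_j\}$, $\{\neg a_j,\ell_3,s\}$ admits a choice of $a_j$ if and only if $\ell_1\vee\ell_2\vee\ell_3$ holds (if all three literals are false, the second clause forces $a_j$ false and the first becomes monochromatic). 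Your stage two is also sound as stated when clauses may repeat literals, since $\{v,v,v'\}$ is not-all-equal exactly when $v\neq v'$. The one soft spot is the parenthetical: in the Garey--Johnson formulation clauses are sets of three \emph{distinct} literals, and you assert without construction that a fixed-size monotone gadget can still force $v\neq v'$. This is true, but since it is the only nontrivial point of the monotonization it deserves a witness: introduce fresh auxiliaries $a,b,c$ for each linked pair and add the four monotone clauses $\{v,v',a\}$, $\{v,v',b\}$, $\{v,v',c\}$, $\{a,b,c\}$. If $v=v'$, the first three clauses force $a$, $b$, $c$ all equal to the complementary value, so $\{a,b,c\}$ is monochromatic and the gadget is unsatisfiable; if $v\neq v'$, the first three clauses hold for any values of the auxiliaries and $\{a,b,c\}$ is easily satisfied, so no spurious constraint is introduced. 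With that gadget spelled out, your reduction is complete and correct, and it is in substance the classical proof that the cited source encapsulates.
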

Now we can prove Theorem \ref{thm:hardness}, which we restate. 

\begin{theorem}
	The \textsc{$k$-Coloring Problem} restricted to $2P_4$-free graphs (and hence $rP_4$-free graphs for every fixed $r\geq 2$) is \textsf{NP}-complete for all $k\geq 5$ and $r\geq 2$.
\end{theorem}
\begin{proof}
Clearly, the \textsc{$k$-Coloring Problem} restricted to $2P_4$-free graphs belongs to \textsf{NP}. For the hardness, given a monotone \textsc{NAE3SAT} instance $I$ with variables $x_1,x_2,...,x_n$ and clauses $C_1,C_2,...,C_m$, we construct a graph $G$ as an instance of the \textsc{$5$-coloring Problem}, as follows. Let $C=\{c_i: i\in [5]\}$, $X=\{x_i:i\in [n]\}$, $Y=\{y_j,z_j:j\in [m]\}$ and $U=\{u_j^k,w_j^k:j\in [m],k\in [3]\}$. Then we let $V(G)=C\cup X\cup Y\cup U$, and the adjacency in $G$ is as follows.
\begin{itemize}
	\item[-] $C$ is a clique of $G$.
	\item[-] $\{c_3,c_4,c_5\}$ is complete to $X$.
	\item[-] $\{c_1,c_2\}$ is complete to $Y$.
	\item[-] For each $j\in [m]$ and $k\in[3]$, we have $c_1u_j^k,c_2w_j^k\in E(G)$. 
	\item[-] For each $j \in [m]$, we have $c_iu_j^k,c_iw_j^k\in E(G)$ for all pairs $(i, k)$ with $i \in \{3,4,5\}$, $k \in \{1,2,3\}$, and $i \neq k+2$. 
	\item[-] $X$ is complete to $Y$.
	\item[-] For each $j\in [m]$ and all $k\in[3]$, we have $y_ju_j^k,z_jw_j^k\in E(G)$.
	\item[-] For each $j\in [m]$, if $C_j$ contains $x_{i_1}$, $x_{i_2}$, and $x_{i_3}$, then we have $x_{i_k}u_j^k,x_{i_k}w_j^k\in E(G)$ for all $k\in [3]$.
\end{itemize}
There are no edges in $E(G)$ other than those described above. It is easily seen that the construction is of polynomial size and can be computed in polynomial time.

\sta{\label{feas} $I$ is satisfiable if and only if $G$ is $5$-colorable.}
First, let $\phi:V(G)\rightarrow [5]$ be a $5$-coloring of $G$. Since $C$ is a clique of $G$, we may assume without loss of generality that that $\phi(c_i)=i$ for every $i\in[5]$. Thus, $\phi(x_i)\in \{1,2\}$ for every $i\in [n]$. Now, let $j\in [m]$ and let $x_{i_1}$, $x_{i_2}$, and $x_{i_3}$ be the literals in $C_j$. If $\phi(x_{i_1})=\phi(x_{i_2})=\phi(x_{i_3})=2$, then we have $\phi(u_j^1)=3$, $\phi(u_j^2)=4$ and $\phi(u_j^3)=5$. But then $y_j$ has a neighbour of each color in $[5]$, which is a contradiction. As a result, at least one of $\phi(x_{i_1})$, $\phi(x_{i_2})$ and $\phi(x_{i_3})$ is equal to $1$. Similarly, by considering the vertex $w_j^k$ for $k\in \{1,2,3\}$, we deduce that at least one of $\phi(x_{i_1})$, $\phi(x_{i_2})$ and $\phi(x_{i_3})$ is $2$. Thus, by setting $x_i$ to be True if $\phi(x_i)=1$ and False if $\phi(x_i)=2$, we conclude that $I$ is satisfiable.
	
	Next, suppose that $I$ is satifiable. We define a coloring $\phi:V(G) \rightarrow [5]$ of $G$ as follows. 
	Let $\phi(c_i)=i$ for every $i\in[5]$. 
	Let $\phi(x_i)=1$ if $x_i$ is assigned True and $\phi(x_i)=2$ otherwise. For each $j\in [m]$ and $C_j$ with literals $x_{i_1}$, $x_{i_2}$, and $x_{i_3}$, and each $k\in [3]$, if $\phi(x_{i_k})=1$, then we set $\phi(u_j^k)=2$ and $\phi(w_j^k)=k+2$, and if $\phi(x_{i_k})=2$, we set $\phi(u_j^k)=k+2$ and $\phi(w_j^k)=1$. Since at least one of $x_{i_1}, x_{i_2}$ and $x_{i_3}$ is assigned True and at least one of them is assigned False, there exists $k_1,k_2\in [3]$ with $k_1\neq k_2$ such that $\phi(u_j^{k_1})=2$ and $\phi(w_j^{k_2})=1$. We set $\phi(y_j)=k_1+2$ and $\phi(z_j)=k_2+2$. We leave it to the reader to check that $\phi$ is $5$-coloring of $G$. This proves \eqref{feas}.
	
\sta{\label{P4type}
	The vertex set of every induced $P_4$ in $G$ intersects either $C$ or both $X$ and $Y$.}
	Let $P$ be an induced $P_4$ in $G$ with $V(P)=\{v_1,v_2,v_3,v_4\}$ and $E(P)=\{v_1v_2,v_2v_3,v_3v_4\}$ such that $V(P)\cap C=\emptyset$. If $v_2\in U$, then without loss of generality, we may assume that $v_1\in X$ and $v_3\in Y$, as desired. So $v_2\notin U$, and similarly, $v_3\notin U$. It follows that $v_2,v_3\in X\cup Y$. Therefore, since $X$ and $Y$ are stable sets of $G$ and $v_2v_3\in E(G)$, one of $v_2$ and $v_3$ belongs to  $X$ and the other one belongs to $Y$. This proves \eqref{P4type}.

\sta{\label{2P4free} $G$ is $2P_4$-free.}
	Suppose not. Let $P$ and $Q$ be two induced $P_4$'s in $G$ with $V(P)$ anticomplete to $V(Q)$. Since $C$ is a clique of $G$, we may assume without loss of generality that $V(P)\cap C\neq \emptyset$. By \eqref{P4type}, we may choose vertices $x\in V(P)\cap X$ and $y\in V(P)\cap Y$. If there exists $c_i\in V(Q)\cap C$, then depending on whether $i\in \{1,2\}$ or not, either $c_iy\in E(G)$ or $c_ix\in E(G)$, which is impossible. Therefore, by \eqref{P4type}, we may choose a vertex $x'\in V(Q)\cap X$. But then $x'y\in E(G)$, a contradiction. This proves \eqref{2P4free}.\vsp
	
	From \eqref{feas}, \eqref{2P4free} and Theorem \ref{NAENP}, it follows that the \textsc{$5$-Coloring Problem} restricted to $2P_4$-free graphs is \textsf{NP}-hard. This completes the proof.
\end{proof}

\section*{Acknowledgments}
We are thankful to the anonymous referees for carefully reading the paper and suggesting a number of improvements.
	
	\bibliographystyle{abbrv}
	\bibliography{rP3ref}   
\end{document}